\documentclass[12pt]{article}
\usepackage{fullpage}
\usepackage[margin=1.5cm]{geometry}
\usepackage{csquotes}
\usepackage{multirow}
\usepackage{tabularx}
\usepackage{enumitem}
\usepackage{braket}
\usepackage{amsmath}
\usepackage{physics}
\usepackage{titling}
\usepackage{graphicx}
\usepackage{amsmath,amsfonts,amssymb}
\usepackage{physics}
\usepackage{amsthm}
\usepackage{tikz}
\usepackage{pgfplots}
\usepackage{mathrsfs}
\usepackage{musicography}
\usepackage{esint}
\usepackage{hieroglf}
\usepackage{bbm}
\usepackage{amsmath}

\usepackage[T1]{fontenc}
\usepackage[utf8]{inputenc}
\usepackage{tikz}

\usepackage[expansion=false]{microtype}
\usetikzlibrary{arrows.meta}
\usepackage[affil-it]{authblk}

\numberwithin{equation}{section}

\usepackage{aliascnt} 
\usepackage{hyperref} 
\usepackage[capitalise,nameinlink]{cleveref}

\newtheorem{theorem}{Theorem}[section]

\newaliascnt{corollary}{theorem}
\newtheorem{corollary}[corollary]{Corollary}
\aliascntresetthe{corollary}

\newaliascnt{lemma}{theorem}
\newtheorem{lemma}[lemma]{Lemma}
\aliascntresetthe{lemma}

\newaliascnt{proposition}{theorem}
\newtheorem{proposition}[proposition]{Proposition}
\aliascntresetthe{proposition}

\newaliascnt{rhp}{theorem}
\newtheorem{rhp}[rhp]{Riemann--Hilbert Problem}
\aliascntresetthe{rhp}

\theoremstyle{definition}

\newaliascnt{definition}{theorem}
\newtheorem{definition}[definition]{Definition}
\aliascntresetthe{definition}

\newaliascnt{remark}{theorem}
\newtheorem{remark}[remark]{Remark}
\aliascntresetthe{remark}

\crefname{rhp}{RHP}{RHPs}
\Crefname{rhp}{RHP}{RHPs}
\crefname{equation}{}{}
\Crefname{equation}{}{}

\makeatletter
\def\widebreve{\mathpalette\wide@breve}
\def\wide@breve#1#2{\sbox\z@{$#1#2$}%
     \mathop{\vbox{\m@th\ialign{##\crcr
\kern0.08em\brevefill#1{0.8\wd\z@}\crcr\noalign{\nointerlineskip}%
                    $\hss#1#2\hss$\crcr}}}\limits}
\def\brevefill#1#2{$\m@th\sbox\tw@{$#1($}%
  \hss\resizebox{#2}{\wd\tw@}{\rotatebox[origin=c]{90}{\upshape(}}\hss$}
\makeatletter

\makeatletter
\usepackage{pict2e,picture}
\pgfkeys{/csteps/inner ysep/.initial=4pt,
    /csteps/inner xsep/.initial=4pt,
    /csteps/inner color/.initial=red,
    /csteps/outer color/.initial=blue,
}
\newsavebox\csteps@CBox
\newlength\csteps@XLength \newlength\csteps@YLength \newlength\csteps@YDepth \newlength\csteps@tmplen
\def\csteps@CircledParam#1#2{\sbox\csteps@CBox{#2}%
    \csteps@XLength=\wd\csteps@CBox\advance\csteps@XLength by\pgfkeysvalueof{/csteps/inner xsep}\relax
    \csteps@tmplen=\pgfkeysvalueof{/csteps/inner ysep}\relax
    \csteps@YDepth=\dp\csteps@CBox\advance\csteps@YDepth by 0.5\csteps@tmplen\relax
    \csteps@YLength=\ht\csteps@CBox\advance\csteps@YLength by\dp\csteps@CBox\advance\csteps@YLength by\pgfkeysvalueof{/csteps/inner ysep}\relax
    \typeout{DBG:#2\space X\space\the\csteps@XLength\space Y:\the\csteps@YLength\space D:\the\csteps@YDepth}%
    \raisebox{-#1\csteps@YDepth}{%
    \ifdim\csteps@XLength>\csteps@YLength
    \makebox[\csteps@XLength]{
        \makebox(0,\csteps@YLength){%
            \color{\pgfkeysvalueof{/csteps/outer color}}\put(0,0){\oval(\csteps@XLength,\csteps@YLength)}%
        }%
    \makebox(0,\csteps@YLength){%
        \put(-.5\wd\csteps@CBox,0){\textcolor{\pgfkeysvalueof{/csteps/inner color}}{#2}}%
    }}%
    \else
    \makebox[\csteps@YLength]{%
        \makebox(0,\csteps@YLength){%
            \color{\pgfkeysvalueof{/csteps/outer color}}\put(0,0){\circle{\csteps@YLength}}%
        }%
    \makebox(0,\csteps@YLength){%
        \put(-.5\wd\csteps@CBox,0){\textcolor{\pgfkeysvalueof{/csteps/inner color}}{#2}}%
     }}%
    \fi
    }%
}
\def\Circled#1{\csteps@CircledParam{1}{#1}}
\def\CircledTop#1{\csteps@CircledParam{0}{#1}}
\makeatother

\tikzset{/csteps/inner ysep=10pt}
\tikzset{/csteps/inner xsep=10pt}

\begin{document}
\title{A Riemann--Hilbert representation for Sobolev orthogonal polynomials}
\author{Alex Little\thanks{CNRS, Laboratoire de
Physique (LPENS), École Normale Supérieure de Lyon,  46, allée d'Italie, Lyon, France \texttt{alexander.little@ens-lyon.fr}}}
\affil{Laboratoire de Physique \\ ENS de Lyon}
\date{\today}
\maketitle
\abstract{In this article we consider polynomials orthogonal with respect to the inner product
$$\langle P,Q  \rangle_S = \int_{\mathbb{R}}P(x)Q(x) \mathrm{e}^{-V(x)} \, \mathrm{d}x +  \lambda\int_{\mathbb{R}}P^\prime(x)Q^\prime(x) \mathrm{e}^{-V(x)} \, \mathrm{d}x$$
where $V$ is a polynomial of even degree (at least four) and positive leading coefficient, and $\lambda > 0$ a constant. The above inner product is a special case of the so-called Sobolev inner product. We show how one can represent the associated Sobolev orthogonal polynomials as a species of Type I multiple-orthogonal polynomial. This correspondence makes use of the WKB asymptotics of a certain second order ODE. From this we may write a Riemann--Hilbert problem for the Sobolev orthogonal polynomials, from which one can deduce a Christoffel--Darboux-type formula for the projection kernel.}

\tableofcontents

\newpage

\section{Introduction}

The theory of orthogonal polynomials is a well-developed and elegant mathematical theory, with connections with approximation theory, integrable systems and random matrix theory, to name only a few. One way to motivate this theory is to consider the problem of \textit{best polynomial approximation}. Let $f$ be a continuous and bounded function on $\mathbb{R}$. Then we look for the polynomial $P$ of degree at most $N-1$ which minimises
\begin{align}\label{L2norm}
\int_{\mathbb{R}} (f(x) - P(x) )^2 w(x) \, \mathrm{d}x
\end{align}
where $w(x) \geq 0$ is called the \textit{weight function}. For the problem to be well-posed for all $N$ we should require $w$ to have positive total mass and finite moments to all orders. Then, by the Hilbert projection theorem, this problem may be solved in terms of \textit{orthogonal polynomials} with respect to the inner product
\begin{align}\label{orthogpolyip}
\langle P, Q \rangle = \int_{\mathbb{R}} P(x) Q(x) w(x) \, \mathrm{d}x \, .
\end{align}
That is, we search for a sequence of polynomials $\{ \Pi_n \}_{n \in \mathbb{N}}$ such that $\Pi_n$ is monic of degree exactly $n$, and $\langle \Pi_n, \Pi_m \rangle = 0$ for all $n \neq m$. The polynomial $P$ that minimises \eqref{L2norm} is the orthogonal projection, with respect to $\langle \cdot, \cdot \rangle$, of $f$ onto the subspace spanned by $\Pi_0, \dots, \Pi_{N-1}$.

However in many applications one is not interested merely in approximating the pointwise value of the function, but also other aspects of its shape, such as its derivative. Thus one could pose the following problem. Given some function $f$ on $\mathbb{R}$ such that $f$ and $f^\prime$ are both continuous and bounded, we search for a polynomial $P$ of degree at most $N-1$ that minimises
\begin{align}\label{sobL2norm}
\int_{\mathbb{R}} (f(x) - P(x) )^2 w_0(x) \, \mathrm{d}x + \int_{\mathbb{R}} (f^\prime(x) - P^\prime(x) )^2 w_1(x) \, \mathrm{d}x 
\end{align}
where $w_0, w_1$ are a pair of weight functions. In this case, one is led to consider the \textit{Sobolev inner product}
$$\langle P, Q \rangle_S = \int_{\mathbb{R}} P(x) Q(x) w_0(x) \, \mathrm{d}x + \int_{\mathbb{R}} P^\prime(x) Q^\prime(x) w_1(x) \, \mathrm{d}x$$
and we define the sequence of monic Sobolev orthogonal polynomials $\{ P_n \}_{n \in \mathbb{N}}$ to be the sequence of polynomials such that $P_n$ is monic of degree exactly $n$, and $\langle P_n, P_m \rangle_S = 0$ for all $n \neq m$. In the same way as before, the polynomial $P$ of degree at most $N-1$ that minimises \eqref{sobL2norm} is the orthogonal projection, with respect to $\langle \cdot, \cdot \rangle_S$, of $f$ onto the subspace spanned by $P_0, \dots, P_{N-1}$. It is, of course, possible to consider even higher derivatives than simply the first derivative, and replace the weight functions with more general measures, however in this article we shall only be interested in the first derivative case.

Sobolev orthogonal polynomials are harder to study than ordinary orthogonal polynomials because the presence of the derivatives spoils many useful properties; for example, the multiplication operator is no longer symmetric. In response to these obstacles, a wide variety of methods have been developed in the subject, reviewed in \cite{marcellan_xu_2015}. A notable gap in the theory of Sobolev orthogonal polynomials is the lack of any \textit{Riemann--Hilbert representation}. A Riemann--Hilbert representation involves representing a quantity of interest in terms of the solution to a matrix-valued boundary value problem in the complex plane, and can be regarded as a non-commutative version of a contour integral \cite{its_2003}. It was first observed by Fokas, Its and Kitaev that orthogonal polynomials have a Riemann--Hilbert representation \cite{fokas_its_kitaev_1992}, and with the development of the nonlinear steepest descent method for Riemann--Hilbert problems, it became possible to extend Plancherel--Rotach asymptotics to non-classical weight functions (see the textbook of Deift \cite{deift_1999}). This was a theoretical triumph that yielded, as a corollary, a proof of local universality of eigenvalues in a large class of random matrix models. There also exist Riemann--Hilbert representations for multiple orthogonal polynomials \cite{vanassche_etal_2001}, bi-orthogonal polynomials \cite{KUIJLAARS2005313}, orthogonal polynomials on the unit circle \cite{11ff0b99-fce0-3547-a87b-f88f323344b3,martinezfinkelshtein2006szegopolynomialsviewriemannhilbert} and more recently skew-orthogonal polynomials \cite{PIERCE2008230,little_skew_op}. For some of these cases a successful nonlinear steepest descent analysis has been carried out, though the more singular the weight function and the higher dimensional the RHP, the more difficult the analysis.

In the present article we address this gap in the literature by presenting a Riemann--Hilbert representation for a certain class of Sobolev orthogonal polynomials. This is done by returning to a method employed in the earliest studies of Sobolev orthogonal polynomials, namely the method of \textit{integration by parts}. This method was used in the studies by Althammer \cite{GDZPPN002179970} and Brenner \cite{Brenner1972} who considered, respectively, the cases $(w_0, w_1) = (\chi_{[-1,1]}, \lambda \chi_{[-1,1]})$ and  $(w_0, w_1) = (\mathrm{e}^{-x} \chi_{\mathbb{R}_+}, \mathrm{e}^{-x}\chi_{\mathbb{R}_+})$. We develop this method in a way that allows us to study a particular class of Sobolev orthogonal polynomials: those for which $w_0(x) = \mathrm{e}^{-V(x)}$ and $w_1(x) =\lambda \mathrm{e}^{-V(x)}$, where $\lambda > 0$ and $V$ is a polynomial of even degree and positive leading coefficient. The central achievement of this article is to show that such Sobolev orthogonal polynomials can be regarded as a particular kind of \textit{multiple orthogonal polynomial}. With this in place, the standard theory of multiple-orthogonal polynomials \cite{vanassche_etal_2001} allows one to write a Riemann--Hilbert formulation (\cref{RHP1}) for these polynomials. From this we are able to derive a Christoffel--Darboux-type formula (\cref{CDtheorem}).

Let us be more precise. Let $\lambda > 0$ and $V$ a polynomial such that
\begin{align}
V(x) = \gamma x^D + \mathcal{O}(x^{D-1}) & & \text{ for } D\geq 4 \text{ even, and } \gamma > 0 \, .
\end{align}
We define $P_n$ for $n \in \mathbb{N}$ to be the monic polynomial of degree exactly $n$ such that
\begin{align*}
\int_{\mathbb{R}} x^k P_n(x) \mathrm{e}^{-V(x)}\, \mathrm{d}x + \lambda \int_{\mathbb{R}}  \frac{\mathrm{d}}{\mathrm{d}x} (x^k) P_n^\prime(x) \mathrm{e}^{-V(x)}\, \mathrm{d}x = 0 \, , & & \forall k \in [\![ 0, n-1]\!] \, .
\end{align*}
If we integrate by parts in the second integral, then we have
\begin{align}\label{Psiorthog2}
\int_{\mathbb{R}} x^k \Psi_n(x) \mathrm{e}^{-V(x)}\, \mathrm{d}x = 0 \, , & & \forall k \in [\![ 0, n-1]\!]
\end{align}
where
\begin{align}\label{PsiPintro}
\Psi_n(x) \overset{\mathrm{def}}{=} \nu_n \Big( V^\prime(x) P_n^\prime(x) - P_n^{\prime\prime}(x) + \lambda^{-1}P_n(x) \Big)  & & \text{ where } \, \nu_n = \begin{cases} \frac{1}{n D \gamma} & n \geq 1 \\
\lambda & n = 0
\end{cases}
\end{align}
is the \textit{dual polynomial}. It has been rescaled so that $\Psi_n$ is monic, where, for $n \geq 1$,  $\Psi_n$ has degree exactly $n+D-2$, since for $D \geq 4$, $V^\prime P^\prime_n$ is the leading term in \eqref{PsiPintro}. If $n = 0$, $\Psi_0 \equiv 1$ which has degree $0$. On the other hand, if $D =2 $, this is no longer true since $V^\prime P^\prime_n$ and $\lambda^{-1}P_n$ have the same degree; and indeed  the $D=2$ case is genuinely anomalous and requires a different treatment compared with $D \geq 4$. Indeed, when $D = 2$, $\Psi_n$ has degree at most $n$ and hence by \eqref{Psiorthog2}, $\Psi_n$ is proportional to the $n$th orthogonal polynomial with respect to the weight function $\mathrm{e}^{-V(x)}$. From the knowledge of $\Psi_n$ one could then reconstruct $P_n$. However when $D \geq 4$, the orthogonality conditions \eqref{Psiorthog2} are not enough to determine $\Psi_n$, and one must find additional constraints. It is precisely this problem that the present article solves. We remark that the method of solution is strongly inspired by the author's previous article on skew-orthogonal polynomials \cite{little_skew_op}, in which a similar underdetermination problem arises.

If we now return to the problem of minimising \eqref{sobL2norm}, the solution is given by
\begin{align*}
P = \sum_{n=0}^{N-1} \frac{\langle f, P_n \rangle_S}{h_n} \,  P_n \, , & &\text{ where } \,  h_n \overset{\mathrm{def}}{=} \langle P_n, P_n \rangle_S \, .
\end{align*}
If one integrates by parts, one may write the solution as
\begin{align}\label{kernelformula}
P(x) = \int_{\mathbb{R}} \mathbb{K}_N(x,y) f(y) \mathrm{e}^{-V(y)}\, \mathrm{d}y \, , & \text{ where } \mathbb{K}_N(x,y) \overset{\mathrm{def}}{=} \sum_{n=0}^{N-1} \frac{P_n(x)}{h_n} \Big[ P_n(y) - \lambda P_n^{\prime \prime}(y) + \lambda  V^\prime(y) P_n^\prime(y)\Big] \, .
\end{align}
There are a variety of kernels one could construct out of the Sobolev orthogonal polynomials, but we argue that $\mathbb{K}_N$, despite not being symmetric, is the most natural since it solves the associated polynomial approximation problem. With these definitions in place, let us now state our main results.
\begin{enumerate}[label=(\arabic*)]
\item We show that it is possible to represent the dual Sobolev polynomials $\{ \Psi_n \}_{n \geq 1}$ as a particular kind of Type II multiple orthogonal polynomials (\cref{dualsobolevMOP}). From this we are able to represent the Sobolev orthogonal polynomials $\{ P_n \}_{n \geq 1}$ as a particular kind of Type I multiple orthogonal polynomials (\cref{sobolevMOP}).
\item From this, one is able to write a pair of $D \times D$ matrix Riemann--Hilbert problems $A_n$ and $\widehat{A_n} = A_n^{-\mathsf{T}}$ (see RHPs \ref{RHP1} and \ref{RHP2}) such that $\Psi_n = (A_n)_{11}$ and $P_n = (\widehat{A_n})_{22} = (A_n^{-1})_{22}$.
\item Finally, we deduce a Christoffel--Darboux-type formula (\cref{CDtheorem}). More precisely, we have that
\begin{align}\label{CDformula}
\mathbb{K}_N(x,y) = - \frac{1}{2 \pi \mathrm{i}} \frac{(A_N^{-1}(x) A_N(y))_{21}}{x-y} \, .
\end{align}
We call this a \enquote{Christoffel--Darboux-type formula} since if one considers the $2 \times 2$ Fokas--Its--Kitaev Riemann--Hilbert problem for orthogonal polynomials, the right hand side of \eqref{CDformula} gives the usual Christoffel--Darboux formula. We remark that since the kernel \eqref{CDformula} takes the form of something finite rank divided by the difference of the two arguments, it is the kernel of an Its--Izergin--Korepin--Slavnov integrable operator \cite{deift_integrable_operators}.
\end{enumerate}
Thus, if one could do a nonlinear steepest descent analysis of $A_N$ as $N \to +\infty$, one would obtain the asymptotics of $\Psi_N$, $P_N$ and the projection kernel $\mathbb{K}_N$. There is, however, a difficulty in addition to all the usual difficulties of analysing higher-dimensional Riemann--Hilbert problems. This is that the elements in the jump matrix are not explicit, but are rather certain special solutions $Y_k$ to a certain second order ODE, and their pairwise Wronskians. One cannot expect in general to have explicit formulas for these quantities, since it amounts to solving a certain spectral problem, although one can write series solutions. However, we would defend the usefulness of our RHP in two ways. Firstly, that the spectral data depend only on $V$ and $\lambda$, not on $N$, so they are given once and for all if $V$ and $\lambda$ are fixed. Secondly, by tools such as exact WKB analysis, it may be possible to obtain asymptotic approximations of the spectral data in the regimes where $\lambda \to 0$ or $\lambda \to +\infty$, and also if we rescale the potential $V = N v$. We discuss the usefulness of our RHP in more detail in \cref{sec:discussion}.

To place our results in context, let us highlight certain past asymptotic results relevant to the regime we are considering. The $n \to +\infty$ asymptotics of the ratio $P_n(x)/ \Pi_n(x)$ in the quartic regime $V(x) = x^4$, and where $x$ is bounded away from the real axis,  has been studied in the literature by making use of relationships between Sobolev orthogonal polynomials and the corresponding ordinary orthogonal polynomials. This has been done both in the regime of $\lambda$ fixed \cite{CACHAFEIRO200326} and in the so-called \enquote{balanced scaling} regime $\lambda_n \to 0$ \cite{alfaro_etal_2009}.  It has also been shown that in the regime of $\lambda$ fixed, $V$ convex, $P_n^\prime(x)$ asymptotically behaves like $n \Pi_{n-1}(x)$ \cite{geronimo_lubinsky_marcellan_2005}. From the known asymptotics of $\Pi_{n-1}$ one may then deduce the asymptotics of $P_n$. These methods, however, are limited to certain regimes, and are unable to access the scaling behaviour of the kernel $\mathbb{K}_N$, the local \enquote{edge} behaviour of $P_n$, or the local oscillations of $P_n$ in the regime of \enquote{balanced scaling.}

Finally, we remark that much of the recent work on Sobolev orthogonal polynomials has centred on the theory of \textit{coherent pairs} (see Section 5 of \cite{marcellan_xu_2015}). It was shown by Meijer \cite{MEIJER1997321} that for $(w_0, w_1)$ to be coherent at least one of these weight functions must be classical and the other an explicitly determined perturbation of it. It follows that our case $(w_0, w_1) = (\mathrm{e}^{-V}, \lambda\mathrm{e}^{-V})$ can never be coherent for $D\geq 4$. Our results therefore are interesting because they fall outside of the \enquote{coherent} class.

\subsection{Notation}

\begin{enumerate}[label=(\arabic*)]
\item We denote the integer interval $[\![ a, b ]\!] = \mathbb{Z} \cap [a,b]$, for $a,b \in \mathbb{R}$.
\item Given a set $A$, we denote $\chi_A$ to be the indicator function of $A$, i.e. $\chi_A(x) = 1$ if $x \in A$ and $\chi_A(x) = 0$ if $x \not\in A$.
\item We let $\mathcal{M}_D(\mathbb{C})$ denote the space of $D \times D$ matrices with elements in $\mathbb{C}$.
\item We let $\mathscr{P}_k$ stand for the space of real polynomials on $\mathbb{R}$ of degree at most $k$, and let $\mathscr{P} = \cup_{k \geq 0}\mathscr{P}_k$ be the space of all polynomials. We let $\mathscr{P}_k^{\mathbb{C}}$ be space of complex polynomials of degree at most $k$.
\item We let $X^k$ stand for the monomial function $X^k(x) =x^k$.
\item Let $\Gamma \subset \mathbb{C}$ be a piecewise smooth contour in $\mathbb{C}$. Then the \textit{Cauchy transform} of the measurable function $f\colon \Gamma \longrightarrow \mathbb{C}$ is defined as
\[
C_\Gamma (f) (z) = \frac{1}{2\pi {\rm i}}\int_\Gamma \frac{f(x)}{x-z} {\rm d}x,\qquad z \in \mathbb{C}\setminus \Gamma,
\]
whenever this integral exists. In our case, we will always consider sufficiently \enquote{nice} functions $f$ and contours $\Gamma$ such that the Cauchy transform is well-defined and analytic for all $z \in \mathbb{C}\setminus \Gamma$. Furthermore, we shall always consider functions with finite moments~${\int_\Gamma |x|^k |f(x)| |{\rm d}x| < +\infty}$ for all $k\geq 0$ and for which $C_\Gamma (f)(z)$ admits an asymptotic expansion in $\frac{1}{z}$ as $z \to \infty$ in the sense of Poincar\'e. More precisely, for every $K \in \mathbb{N}$, we~have
\begin{align*}
C_\Gamma (f)(z) = - \frac{1}{2\pi {\rm i}} \sum_{k=0}^K z^{-k-1} \int_\Gamma x^k f(x) {\rm d}x +\frac{z^{-K-1}}{2\pi {\rm i} } \int_\Gamma \frac{ x^{K+1} f(x)}{x-z} {\rm d}x \, .
\end{align*}
If $\mathrm{dist}(z,\Gamma) \geq \epsilon > 0$, we find that $\int_\Gamma \frac{ x^{K+1} f(x)}{x-z} {\rm d}x$ is bounded. However, if $f$ is analytic we may deform the contour to achieve boundedness in a full neighbourhood of (complex) infinity. This will be the case in all the Cauchy integrals that appear in this paper. The general theory of such Cauchy integrals for functions of varying degrees of decay and regularity is discussed in notes of Deift \cite{deift2019riemannhilbert} and the book of Muskhelishvili \cite{singular}.
\end{enumerate}

\section{Riemann--Hilbert representation}

For the rest of this article, unless otherwise stated, $V(x) = \gamma x^D + \mathcal{O}(x^{D-1})$ ($x \to \infty$) is a real polynomial of positive leading coefficient $\gamma > 0$ and even degree $D\geq 4$.
\begin{definition} We define $\langle \cdot, \cdot \rangle_S : \mathscr{P}\times \mathscr{P} \longrightarrow \mathbb{R}$ to be the inner product on polynomials defined by
\begin{align}\label{sobolevinnerprod}
\langle P, Q \rangle_S = \int_{\mathbb{R}}P(x) Q(x) \mathrm{e}^{-V(x)}\, \mathrm{d}x + \lambda \int_{\mathbb{R}}P^\prime(x) Q^\prime(x) \mathrm{e}^{-V(x)}\, \mathrm{d}x \, .
\end{align}
We define $\{ P_n \}_{n \in \mathbb{N}}$ to be the sequence of polynomials such that $P_n$ is monic of degree $n$ and $\langle P_n , P_m \rangle_S = 0$ for all $n \neq m$. We call these $P_n$ the $n$th monic Sobolev orthogonal polynomial.
\end{definition}
$\langle \cdot , \cdot \rangle_S $ is clearly symmetric, and from the inequality
$$\langle P, P \rangle_S \geq \int_{\mathbb{R}} P(x)^2 \mathrm{e}^{-V(x)} \, \mathrm{d}x \geq 0$$
we see that if $\langle P, P \rangle_S = 0$, we must have $P \equiv 0$, and so $\langle \cdot , \cdot \rangle_S $ is positive definite. Hence by the Gram--Schmidt procedure we may orthogonalise the sequence $X^0, X^1 , X^2, \dots $ and so arrive at a sequence of monic polynomials orthogonal with respect to  $\langle \cdot , \cdot \rangle_S $. This establishes existence of the sequence of Sobolev orthogonal polynomials, and by positive definiteness this sequence is unique.

Motivated by this, let us define the \textit{dual polynomial}
\begin{align}\label{PsiP}
\Psi_n(z) \overset{\mathrm{def}}{=} \nu_n \Big( V^\prime(z) P_n^\prime(z) - P_n^{\prime\prime}(z) + \lambda^{-1}P_n(z) \Big)  & & \text{ where } \, \nu_n = \begin{cases} \frac{1}{n D \gamma} & n \geq 1 \\
\lambda & n = 0\, .
\end{cases}
\end{align}
By construction, $\Psi_n$ is monic of degree $n + D-2$ for $n \geq 1$ and $\Psi_0 \equiv 1$. From the orthogonality condition for $P_n$, we have $\langle X^k , P_n \rangle_S = 0$ for all $k \in [\![ 0, n-1 ]\!]$. Integrating by parts we find that $\Psi_n$ satisfies
\begin{align}\label{Psiorthog}
\int_{\mathbb{R}} x^k \Psi_n(x) \mathrm{e}^{-V(x)} \, \mathrm{d}x = 0 \, , & & \forall k \in [\![ 0, n-1 ]\!] \, .
\end{align}
These orthogonality conditions, however, leave $\Psi_n$ underdetermined since it supplies $n$ conditions but one has $n+D-2$ degrees of freedom in total, leaving $D-2$ degrees of freedom remaining. The remaining degrees of freedom are fixed by the fact that $\Psi_n$ lies in the image of the map $P \mapsto P - \lambda P^{\prime\prime} + \lambda V^\prime P^\prime$.
\begin{remark} In the case $D = 2$, where without loss of generality we may take $V(x) = x^2$, the polynomial $V^\prime P_n^\prime - P_n^{\prime\prime} + \lambda^{-1}P_n$ has degree at most $n$ and is orthogonal to $X^0, \dots, X^{n-1}$ with respect to \eqref{orthogpolyip} for $w(x) = \mathrm{e}^{-x^2}$. It must therefore be proportional to the $n$th monic Hermite polynomial $H_n$, orthogonal with respect to \eqref{orthogpolyip} for the aforementioned choice of $w$. By comparing the coefficient of $x^n$ we see that
$$2x P_n^\prime(x) - P_n^{\prime\prime}(x) + \lambda^{-1}P_n(x) = (2 n  + \lambda^{-1})H_n(x) \, .$$
From the differential equation satisfied by $H_n$, one immediately sees that $P_n = H_n$ is the unique polynomial solution to the above ODE. This illustrates the well-known fact that the case $D=2$ for \eqref{sobolevinnerprod} is trivial (see Section 3 of \cite{alfaro_etal_2009}), in the sense that it reduces to classical Hermite polynomials. For this reason, in this article we shall only be interested in the case $D \geq 4$.
\end{remark}
\begin{lemma}
If $D > 2$, then the map $\mathscr{P}_n \to \mathscr{P}_{n+D-2} $, $P \mapsto P - \lambda P^{\prime\prime} + \lambda V^\prime P^\prime$ is injective. 
\end{lemma}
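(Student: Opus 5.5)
Since the map $L\colon P \mapsto P - \lambda P^{\prime\prime} + \lambda V^\prime P^\prime$ is linear, it suffices to show that its kernel is trivial. The plan is a leading-coefficient (degree) argument, which uses only $D>2$; positivity of $\lambda$ and $\gamma$ enters only through their being nonzero.

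First, check that $L$ indeed maps $\mathscr{P}_n$ into $\mathscr{P}_{n+D-2}$. For $P$ of degree $m \leq n$, the three terms have degrees at most $m$, $m-2$ and $m+D-2$ respectively, so $L(P) \in \mathscr{P}_{n+D-2}$.

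Now suppose $P \neq 0$ has exact degree $m$ with leading coefficient $c \neq 0$, and split into two cases.
\begin{itemize}
\item If $m \geq 1$, then $V^\prime P^\prime = (D\gamma x^{D-1} + \dots)(m c x^{m-1}+\dots)$ has leading term $\lambda m D \gamma c\, x^{m+D-2}$. Since $D \geq 4 > 2$, we have $m+D-2 > m > m-2$, so neither $P$ nor $P^{\prime\prime}$ can cancel this term. Hence $L(P)$ has exact degree $m+D-2$ with nonzero leading coefficient $\lambda m D\gamma c$, and in particular $L(P) \neq 0$.
\item If $m=0$, then $P = c$ is a nonzero constant and $L(P) = c \neq 0$.
\end{itemize}
In both cases $\ker L = \{0\}$, so $L$ is injective.

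There is no real obstacle here. The only point needing care is the strict inequality $m+D-2>m$, which is exactly where $D>2$ is used. It is also what fails for $D=2$, where the top-degree terms of $P$ and $\lambda V^\prime P^\prime$ have the same degree and could in principle cancel (for instance, if $\gamma<0$ or $\lambda<0$ were allowed).
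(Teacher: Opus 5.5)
Your proof is correct and follows the paper's argument: you reduce to showing the kernel is trivial, then split into constant $P$ (where $L(P)=P\neq 0$) and non-constant $P$ (where $L(P)$ has exact degree $\deg P + D - 2$). Your version adds the range check and the explicit nonzero leading coefficient $\lambda m D\gamma c$, which the paper leaves implicit.
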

\begin{proof}
To show this we need only show that the kernel is trivial, hence suppose that $ P(z) - \lambda P^{\prime\prime}(z) + \lambda V^\prime(z) P^\prime(z) = 0$. If $P \neq 0$, then there are two possibilities: either $P$ is constant or $P$ is non-constant. If $P$ is constant then the left hand side is equal to $P \neq 0$ which is a contradiction. If $P$ is non-constant, then the left hand side has degree exactly $\deg P + D-2 > 0$, which is also a contradiction, hence $P = 0$.
\end{proof}
From this injectivity, it follows that the image of this map $\mathscr{P}_n \to \mathscr{P}_{n+D-2} $ is a codimension $D-2$ subspace. To characterise this subspace we should find $D-2$ independent linear functionals which vanish on this space, and in this way supply the \enquote{missing} $D-2$ orthogonality conditions. It is of course clear that, abstractly, such linear functionals must exist. However such abstract arguments give us no formula, and \textit{a priori} such linear functionals depend on the dimension $n$. We thus search for a more explicit way to characterise the image of the map.

\subsection{The homogeneous problem and Sibuya solutions}

The key idea is to reverse our perspective and regard \eqref{PsiP} as a differential equation for $P_n$ in which $\Psi_n$ plays the role of a source term. Then we should ask: given a polynomial source term $\Psi_n$, when does there exist a polynomial solution $P_n$? We may solve \eqref{PsiP} by the method of variation of parameters. Hence we should begin by considering the homogeneous version of the ODE. Thus let us consider 
\begin{align}\label{Fequation}
 V^\prime(z) F^\prime(z) - F^{\prime\prime}(z) + \lambda^{-1}F(z) = 0 \, .
\end{align}
Once we have fully understood the space of solutions of \eqref{Fequation}, we can analyse the solutions of \eqref{PsiP}.

We begin by remarking that \eqref{Fequation} may be transformed into a Schrödinger equation. If one lets $\psi(z) = F(z) \mathrm{e}^{-\frac{1}{2}V(z)}$ one finds that
\begin{align}\label{psiequation}
\psi^{\prime \prime}(z) = U(z) \psi(z) & &  \text{ where  } \,  U(z) = \frac{1}{4}V^\prime(z)^2 - \frac{1}{2}V^{\prime\prime}(z) + \lambda^{-1} \, .
\end{align}
For reasons that shall become clear later, we are interested in the behaviour of the solutions of \eqref{psiequation} as $z \to \infty$ in different directions in $\mathbb{C}$. Intuitively, when $z$ becomes large the \enquote{potential} $U(z)$ also becomes large. Hence one expects the $z \to \infty$ behaviour to be described by Liouville--Green (WKB) asymptotics, and this is indeed the case.

Naively following WKB theory, if we let $\phi(z) = \int^z \sqrt{U(w)}\, \mathrm{d}w $ be the \enquote{phase} function then
\begin{align*}
\phi(z) = \frac{1}{2}V(z) - \frac{1}{2}\log V^\prime(z) + \mathcal{O}(1) \, , & & z \to \infty \, .
\end{align*}
hence the Liouville--Green asymptotics gives
\begin{align*}
\frac{1}{\sqrt{V^\prime(z)}} \mathrm{e}^{ \pm \frac{1}{2} ( V(z) - \log V^\prime(z)) + \mathcal{O}(1)} & & z\to \infty \, .
\end{align*}
If we take the $+$ sign we get the asymptotics $V^\prime(z)^{-1}\mathrm{e}^{\frac{1}{2}V(z)}$ and, if we take the $-$ sign, $\mathrm{e}^{-\frac{1}{2}V(z)}$. Let
\begin{align}\label{infinity}
\infty_k \overset{\mathrm{def}}{=} \mathrm{e}^{\mathrm{i}\pi k /D} \infty \, , & & k \in [\![ 0, 2D-1 ]\!]
\end{align}
then for $k$ even the $\mathrm{e}^{-V(z)}$ solution decays as $z \to \infty_k$ and the $V^\prime(z)^{-1}\mathrm{e}^{\frac{1}{2}V(z)}$ solution blows up, whilst for $k$ odd it is the other way round. Given that \eqref{psiequation} has two linearly independent solutions, we expect that as $z \to \infty_k$ there is, up to constant multiplication, only one solution of \eqref{psiequation} which decays, while the other solution blows up. We should emphasise that such asymptotics only hold in a sector of $\mathbb{C}$, so that the decaying solution when continued to another sector will no longer decay. This is the Stokes phenomenon. The rigorous construction of such solutions and their relations is the subject of the celebrated monograph of Sibuya \cite{sibuya_1975}. For our purposes, because our potential $U$ has a very special structure, we can do a self-contained construction of these Sibuya solutions.

\begin{remark} We note that if we define $U_0(z) = \frac{1}{4}V^\prime(z)^2 - \frac{1}{2} V^{\prime\prime}(z)$, so that $U(z) = U_0(z) + \lambda^{-1}$, then $U_0$ takes the form of a \enquote{supersymmetric} potential studied by Witten in \cite{witten_1982} and $-\lambda^{-1}$ behaves like a spectral parameter. In particular, it was observed that one has the exact factorisation 
\begin{align}\label{factorisation}
 - \frac{\mathrm{d}^2}{\mathrm{d}x^2} + U_0(x) = \Big( - \frac{\mathrm{d}}{\mathrm{d}x} + \frac{1}{2} V^\prime(x) \Big)\Big(\frac{\mathrm{d}}{\mathrm{d}x} + \frac{1}{2} V^\prime(x) \Big) \, .
\end{align}
Whether this connection with supersymmetry has any deep implications for the theory of Sobolev orthogonal polynomials is unclear. However the factorisation \eqref{factorisation} will become useful later because it implies the above operator is positive semidefinite on $L^2(\mathbb{R})$.
\end{remark}

In order to construct the Sibuya solutions to \eqref{psiequation}, it is actually better to return to \eqref{Fequation}. It is convenient to introduce the following change of variables on a neighbourhood of $\infty$.
\begin{definition}\leavevmode
\begin{enumerate}[label=(\arabic*)]
\item Consider the function
\begin{align*}
\varphi(z) \overset{\mathrm{def}}{=} ( \gamma^{-1} V(z))^{1/D} \, ,
\end{align*}
defined on some neighbourhood of infinity. The choice of branch is made by requiring that $\varphi(z) = z ( 1+ \mathcal{O}(z^{-1}))$ as $z \to \infty$. Define $\varrho > 0$ to be a constant sufficiently large so that $\varphi$ and $\varphi^{-1}$ are both injective on $\mathcal{U}_{\varrho} := \{ z \in \mathbb{C} \, : \, |z| > \frac{1}{2} \varrho \}$. Note that, by construction, $V(z) = \gamma \varphi(z)^D$. Furthermore, by making $\varrho$ sufficiently large we can make it so that
\begin{align}\label{phizratio}
\frac{1}{2}|z| \leq |\varphi(z) | \leq 2 |z| & & \forall |z| \geq \varrho \, .
\end{align}
Let us also suppose that $\varrho$ is sufficiently large so that $\frac{(D-1)\varrho^{-D}}{\gamma D} \leq \frac{1}{2}$, an assumption which will be useful later.
\item Let us then define
\begin{align*}
\Omega_k \overset{\mathrm{def}}{=} \varphi^{-1}\Big( \Big\{ w \in \mathbb{C} \, : \, |w| \geq \varrho \, \text{ and } \,  \pi \frac{k-1}{D} \leq  \arg w \leq \pi \frac{k+1}{D} \Big\}  \Big) \, , & & k \in [\![0, 2D-1 ]\!] \, .
\end{align*}
\item Then for $z \in \Omega_k$, define the contour
\begin{align*}
\tau_{k,z}(s) = \begin{cases} \varphi^{-1} \big( \,  |\varphi(z)| \,  \mathrm{e}^{\mathrm{i}(1-s) \arg \varphi(z) + \mathrm{i}\pi k s/D}\big) & s \in [0,1] \\
\varphi^{-1} \big(|\varphi(z)| \, \mathrm{e}^{\mathrm{i}\pi k/D} s \big)  & s \in [1,+\infty) \, .
\end{cases}
\end{align*}
$\Gamma_{k,z} \overset{\mathrm{def}}{=} \tau_{k,z}([0,+\infty)) \subset \mathbb{C}$
\end{enumerate}
\end{definition}
$\varphi$ is essentially a conformal deformation on a neighbourhood of $\infty$, asymptotically equal to the identity, which makes $V$ a monomial in the new variable. The path $\tau_{k,z}$ for the case $D=4$, $k=1$, is depicted in Figure \ref{fig:taukz}.

\begin{figure}[tbp]
\centering
\def\rr{1.8}       
\def\rz{3.0}       
\def\Rout{4.7}     
\def\thz{15}       
\def\thk{45}       
\def\thm{0}        
\def\thp{90}       
\def\thmid{30}     
\def\bt{1.1}
\def\ps{135}
\def\Px#1#2{(#2)*cos(#1)+(\bt/(#2))*cos(\ps-(#1))}
\def\Py#1#2{(#2)*sin(#1)+(\bt/(#2))*sin(\ps-(#1))}
\begin{tikzpicture}[
    scale=0.85,
    every node/.style={font=\small},
    tip/.style={-{Stealth[length=2.6mm,width=2mm]}},
    pathline/.style={very thick},
    bdry/.style={densely dashed,gray!65,thin},
    ax/.style={-{Stealth[length=1.8mm]},gray!40,thin},
    sect/.style={fill=blue!7},
    disk/.style={fill=black!12},
    cp/.style={variable=\t,smooth,samples=61}   
  ]

\begin{scope}
  \fill[sect] (\thm:\rr) arc (\thm:\thp:\rr) -- (\thp:\Rout)
              arc (\thp:\thm:\Rout) -- cycle;
  \fill[disk] (0,0) circle (\rr);
  \draw[gray!55,thin] (0,0) circle (\rr);
  \draw[ax] (-2.7,0) -- ({\Rout+0.7},0);
  \draw[ax] (0,-2.7) -- (0,{\Rout+0.7});
  \draw[bdry] (0,0) circle (\rz);
  \draw[bdry] (\thm:\rr) -- (\thm:{\Rout+0.35});
  \draw[bdry] (\thp:\rr) -- (\thp:{\Rout+0.35});
  \draw[bdry] (0,0) -- (\thk:\rz);
  \draw[gray!70,thin] (\thm:0.85) arc (\thm:\thk:0.85);
  \draw[pathline,tip] (\thz:\rz) arc (\thz:\thmid:\rz);
  \draw[pathline]     (\thmid:\rz) arc (\thmid:\thk:\rz);
  \draw[pathline,tip] (\thk:\rz) -- (\thk:{\Rout+0.8});
  \fill (\thz:\rz) circle (1.8pt);
  \fill (\thk:\rz) circle (1.2pt);
  \node[anchor=north west,inner sep=1.5pt] at (\thz:\rz) {$\varphi(z)$};
  \node[anchor=south west] at (\thk:{\Rout+0.8}) {$\infty_1$};
  \node[anchor=west,inner sep=1pt] at (21:0.95) {\scriptsize$\tfrac{\pi}{4}$};
  \node[anchor=east,gray!65,fill=white,inner sep=1.5pt]
        at (210:{\rr-0.05}) {$|w|=\varrho$};
  \node[anchor=west,gray!65,fill=white,inner sep=1.5pt]
        at (300:\rz) {$|w|=|\varphi(z)|$};
  \node[anchor=north east,inner sep=1pt] at (0,0) {\scriptsize$0$};
  \node at (1.2,-4.2) {(a) the $\varphi$-plane};
\end{scope}

\begin{scope}[shift={(9.6,0)}]
  \fill[sect]
      plot[cp,domain=\thm:\thp] ({\Px{\t}{\rr}},{\Py{\t}{\rr}})
   -- plot[cp,domain=\rr:\Rout] ({\Px{\thp}{\t}},{\Py{\thp}{\t}})
   -- plot[cp,domain=\thp:\thm] ({\Px{\t}{\Rout}},{\Py{\t}{\Rout}})
   -- plot[cp,domain=\Rout:\rr] ({\Px{\thm}{\t}},{\Py{\thm}{\t}})
   -- cycle;
  \fill[disk] plot[cp,domain=0:360,samples=181]
        ({\Px{\t}{\rr}},{\Py{\t}{\rr}});
  \draw[gray!55,thin] plot[cp,domain=0:360,samples=181]
        ({\Px{\t}{\rr}},{\Py{\t}{\rr}});
  \draw[ax] (-2.7,0) -- ({\Rout+0.7},0);
  \draw[ax] (0,-2.7) -- (0,{\Rout+0.7});
  \draw[bdry] plot[cp,domain=0:360,samples=181]
        ({\Px{\t}{\rz}},{\Py{\t}{\rz}});
  \draw[bdry] plot[cp,domain=\rr:{\Rout+0.35}]
        ({\Px{\thm}{\t}},{\Py{\thm}{\t}});
  \draw[bdry] plot[cp,domain=\rr:{\Rout+0.35}]
        ({\Px{\thp}{\t}},{\Py{\thp}{\t}});
  \draw[dotted,gray!80] (0,0) -- (\thk:{\Rout+1.0});
  \draw[pathline,tip] plot[cp,domain=\thz:\thmid]
        ({\Px{\t}{\rz}},{\Py{\t}{\rz}});
  \draw[pathline]     plot[cp,domain=\thmid:\thk]
        ({\Px{\t}{\rz}},{\Py{\t}{\rz}});
  \draw[pathline,tip] plot[cp,domain=\rz:{\Rout+0.8}]
        ({\Px{\thk}{\t}},{\Py{\thk}{\t}});
  \fill ({\Px{\thz}{\rz}},{\Py{\thz}{\rz}}) circle (1.8pt);
  \fill ({\Px{\thk}{\rz}},{\Py{\thk}{\rz}}) circle (1.2pt);
  \node[anchor=north west,inner sep=1.5pt]
        at ({\Px{\thz}{\rz}},{\Py{\thz}{\rz}}) {$z$};
  \node[anchor=south west]
        at ({\Px{\thk}{\Rout+0.8}},{\Py{\thk}{\Rout+0.8}}) {$\infty_1$};
  \node[anchor=south west,inner sep=1pt]
        at ({\Px{28}{\rz+0.22}},{\Py{28}{\rz+0.22}}) {$\tau_{1,z}$};
  \node at ({\Px{62}{4.0}},{\Py{62}{4.0}}) {$\Omega_1$};
  \node[anchor=north east,inner sep=1pt] at (0,0) {\scriptsize$0$};
  \node at (1.2,-4.2) {(b) the $z$-plane};
\end{scope}


\draw[-{Stealth[length=2.6mm]},thick] (6.1,4.7) -- (7.7,4.7)
      node[midway,above] {$\varphi^{-1}$};

\end{tikzpicture}
\caption{The path $\tau_{k,z}$ depicted for the case $D=4$ and $k=1$.}
\label{fig:taukz}
\end{figure}
\begin{lemma}\label{progressivity} $\Re V(\tau_{k,z}(s))$ is nondecreasing in $s \in [0,+\infty)$ for $k$ even and nonincreasing for $k$ odd.
\end{lemma}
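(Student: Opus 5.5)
Since $V(z)=\gamma\varphi(z)^D$ on $\mathcal{U}_\varrho$ and $\varphi(\tau_{k,z}(s))$ is given explicitly, I will work entirely in the $\varphi$-plane. Write $r=|\varphi(z)|\geq\varrho$ and $\theta_0=\arg\varphi(z)$. Because $z\in\Omega_k$, we have $\theta_0\in[\pi(k-1)/D,\pi(k+1)/D]$. By definition $\varphi(\tau_{k,z}(s))$ is $r\mathrm{e}^{\mathrm{i}\theta(s)}$ on $[0,1]$, where $\theta(s)=(1-s)\theta_0+s\pi k/D$, and it is $rs\,\mathrm{e}^{\mathrm{i}\pi k/D}$ on $[1,+\infty)$. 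Hence
\begin{align*}
\Re V(\tau_{k,z}(s)) = \begin{cases} \gamma r^D \cos(D\theta(s)) & s\in[0,1]\\ \gamma r^D s^D \cos(\pi k) = (-1)^k\gamma r^D s^D & s\in[1,+\infty)\,. \end{cases}
\end{align*}
The path is continuous at $s=1$, so it suffices to check monotonicity on each piece separately.

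On $[1,+\infty)$ the claim is immediate: $s^D$ is increasing and $\gamma r^D>0$, so the function is increasing for $k$ even and decreasing for $k$ odd.

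On $[0,1]$, set $u(s)=D\theta(s)-\pi k=(1-s)(D\theta_0-\pi k)$. Then $\cos(D\theta(s))=(-1)^k\cos u(s)$. The quantity $u(s)$ moves linearly from $u_0=D\theta_0-\pi k\in[-\pi,\pi]$ to $0$, so $|u(s)|=(1-s)|u_0|$ decreases monotonically inside $[0,\pi]$. Since $\cos$ is even and decreasing on $[0,\pi]$, $\cos u(s)$ is nondecreasing in $s$. Therefore $\Re V(\tau_{k,z}(s))$ is nondecreasing for $k$ even and nonincreasing for $k$ odd.

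The only point needing care is that $\tau_{k,z}$ stays in the domain where $V=\gamma\varphi^D$ holds, i.e.\ where $\varphi^{-1}$ is defined. This follows because $|\varphi|\geq r\geq\varrho$ along the whole path, so the path lies in $\mathcal{U}_\varrho$ by the choice of $\varrho$. The identity $V=\gamma\varphi^D$ holds there by construction. Beyond this check, the argument is elementary; I expect no real obstacle.
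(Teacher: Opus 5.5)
Your proof is correct and matches the paper's argument: you compute $\Re V(\tau_{k,z}(s))$ explicitly in the $\varphi$-plane as $(-1)^k|V(z)|\cos\bigl((1-s)(D\arg\varphi(z)-\pi k)\bigr)$ on $[0,1]$ and $(-1)^k|V(z)|s^D$ on $[1,+\infty)$. You then use that the cosine's argument shrinks linearly to $0$ from a starting value in $[-\pi,\pi]$. Your added check that the path stays in the region where $V=\gamma\varphi^D$ holds is a harmless detail that the paper leaves implicit.
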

\begin{proof}
By explicit calculation, one has
\begin{align*}
\Re V(\tau_{k,z}(s)) = (-1)^k  |V(z)|  \begin{cases} \cos\Big((1-s) D \Big\{ \arg \varphi(z) - \pi k/D\Big\} \Big)  & s \in [0,1] \\
 s^D & s \in [1, +\infty) \, .
\end{cases}
\end{align*}
Since, $z \in \Omega_k$, $D \Big\{ \arg \varphi(z) - \pi k/D\Big\} \in [-\pi,\pi]$, $s\mapsto \cos\Big((1-s) D \Big\{ \arg \varphi(z) - \pi k/D\Big\} \Big)$ is nondecreasing for $s \in [0,1]$. Likewise $ s^D$ is clearly nondecreasing for $s \in [1,+\infty)$.
\end{proof}
Moving forward, we make the following claim.
\begin{proposition}\label{FkODEprop} There exists a unique entire function $F_k$ such that
\begin{align}\label{FkODE}
(-1)^k V^\prime(z) F_k^\prime(z) - F_k^{\prime\prime}(z) + \lambda^{-1}F_k(z) = 0
\end{align}
and $F_k(z) = 1 + \mathcal{O}(|z|^{2-D})$ for $z \in \Omega_k$ as $|z| \to +\infty$.
\end{proposition}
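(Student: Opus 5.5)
Throughout, write $\sigma=(-1)^k$. The plan is to recast \eqref{FkODE} as an integral equation along the contours $\Gamma_{k,z}$, solve it by Picard iteration on $\Omega_k$, and then extend the solution to an entire function using the ODE.

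\textbf{Integral equation.} Set $G=F_k'$. Then \eqref{FkODE} reads $G'-\sigma V'G=\lambda^{-1}F_k$. With the integrating factor $\mathrm{e}^{-\sigma V}$, and imposing decay of $G$ towards $\infty_k$, we get
\[
G(z)=-\lambda^{-1}\int_{\Gamma_{k,z}} \mathrm{e}^{\sigma(V(z)-V(w))}F_k(w)\,\mathrm{d}w .
\]
The condition $F_k(\infty_k)=1$ then gives the Volterra-type equation
\[
F_k(z)=1-\int_{\Gamma_{k,z}}G(u)\,\mathrm{d}u=1+\lambda^{-1}\int_{\Gamma_{k,z}}\int_{\Gamma_{k,u}}\mathrm{e}^{\sigma(V(u)-V(w))}F_k(w)\,\mathrm{d}w\,\mathrm{d}u .
\]
By \cref{progressivity}, $\Re\,\sigma V$ is nondecreasing along $\Gamma_{k,u}$. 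Hence the exponential has modulus at most $1$ for every $w$ beyond $u$.

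\textbf{Kernel estimate.} The key step is to show
\[
\int_{\Gamma_{k,u}}\bigl|\mathrm{e}^{\sigma(V(u)-V(w))}\bigr|\,|\mathrm{d}w|\le C|u|^{1-D}.
\]
I would prove this in the $\varphi$-variable, where $V=\gamma\varphi^D$. On the arc piece of the contour, $\Re\,\sigma V$ behaves like $|V(u)|\cos\theta$. On the ray piece, the bound $\int_1^\infty \mathrm{e}^{-\gamma r^D(s^D-1)}\,r\,\mathrm{d}s\sim r^{1-D}/(\gamma D)$ is where the assumption $\tfrac{(D-1)\varrho^{-D}}{\gamma D}\le\tfrac12$ and \eqref{phizratio} enter. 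Integrating once more in $u$ then gives a kernel of size $O(|z|^{2-D})$, which tends to $0$ since $D\ge4$.

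I expect the arc portion to be the main obstacle. Near the boundary of the sector $\Omega_k$ the exponential need not be small uniformly, so a crude bound is not enough. My first attempt would parametrise by the angle $\theta$ and use the fact that $\frac{\mathrm{d}}{\mathrm{d}\theta}\Re V$ is comparable to $|V|\sin(\ldots)$. This lets one integrate $|\mathrm{d}w|=|\varphi|\,\mathrm{d}\theta$ against $\mathrm{e}^{-|V|(1-\cos)}$ and obtain $O(|u|^{1-D})$ away from the endpoints. At the sector edges I would instead enlarge $\varrho$, or deform the arc slightly inside the sector. An alternative is to compose the contours with a monotone reparametrisation so that $\Re V$ becomes the integration variable.

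\textbf{Iteration, entirety and uniqueness.} Define the Picard iterates $F^{(0)}=1$ and $F^{(m+1)}=1+\mathcal{K}F^{(m)}$. Taking $\varrho$ large makes $\|\mathcal{K}\|_{L^\infty(\Omega_k)}<1$, so the iterates converge uniformly to a bounded solution with $F_k-1=\mathcal{K}F_k=O(|z|^{2-D})$. Holomorphy in the interior follows from uniform limits of holomorphic functions, using path-independence from Cauchy's theorem. Differentiating the integral equation twice shows that the limit satisfies \eqref{FkODE} there. Since \eqref{FkODE} is linear with entire coefficients, every local solution continues analytically to an entire function, which we call $F_k$.

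For uniqueness, let $H$ be the difference of two such solutions, so $H=O(|z|^{2-D})\to0$ in $\Omega_k$. WKB considerations give two local behaviours in the sector: $\sim1$ and $\sim\mathrm{e}^{\sigma V}/V'$, where the latter grows towards $\infty_k$. More rigorously, $H$ itself satisfies the integral equation with inhomogeneity $0$, provided we verify that $H'$ decays, which follows from the variation-of-constants formula. The contraction estimate then gives $\|H\|\le\|\mathcal{K}\|\|H\|$, hence $H\equiv0$ on $\Omega_k$, and therefore everywhere by analytic continuation.
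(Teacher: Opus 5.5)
Your overall scheme matches the paper: integral equation along $\Gamma_{k,z}$, iteration, then continuation through the ODE. For $u\in\Gamma_{k,z}$ the contour $\Gamma_{k,u}$ is the tail of $\Gamma_{k,z}$, so your double integral is \eqref{integralequation} with the order of integration reversed.

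The gap is your kernel estimate $\int_{\Gamma_{k,u}}|\mathrm{e}^{\sigma(V(u)-V(w))}|\,|\mathrm{d}w|\le C|u|^{1-D}$, which is false uniformly on $\Omega_k$. Write $\varphi(u)=r\mathrm{e}^{\mathrm{i}(\pi k/D+\theta)}$ with $|\theta|\le\pi/D$. On the arc of $\Gamma_{k,u}$ one has $\Re\,\sigma\bigl(V(w)-V(u)\bigr)=\gamma r^D(\cos D\theta'-\cos D\theta)$ with $|\theta'|\le|\theta|$. Its rate of growth at $w=u$ is proportional to $\sin(D|\theta|)$. This vanishes on the sector edges \emph{and} on the central ray, where the arc is tangent to the level lines of $\Re V$. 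If $|\theta|=\pi/D$, or $|\theta|\asymp r^{-D/2}$, the integrand stays of order $1$ along a piece of arc of length $\asymp r^{1-D/2}$. The integral is therefore $\gtrsim|u|^{1-D/2}$, larger than $|u|^{1-D}$ by a factor $|u|^{D/2}$ at every radius. So enlarging $\varrho$ cannot help. Neither can a slight angular deformation, nor reparametrising by $\Re V$, whose Jacobian is singular exactly at the tangencies. Integrating the true size of the inner integral along $\Gamma_{k,z}$, as your absolute-value scheme requires, gives only $\mathcal{O}(|z|^{2-D}\log|z|)$. That is still enough for a contraction and existence, but not for the error term in the statement.

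The missing idea is to use cancellation instead of the modulus: at the tangencies, $\Im V$ oscillates rapidly along the arc. To exploit this, the exponential must be integrated in a variable on which the unknown does not depend. Apply Fubini over $z<u<w$ on $\Gamma_{k,z}$ to get
$$F_k(z)=1+\lambda^{-1}\int_{\Gamma_{k,z}}K(z,w)F_k(w)\,\mathrm{d}w,\qquad K(z,w)=\int_z^w\mathrm{e}^{\sigma(V(t)-V(w))}\,\mathrm{d}t .$$
Now integrate by parts using $\mathrm{e}^{\sigma V}=(\sigma V')^{-1}(\mathrm{e}^{\sigma V})'$.
\begin{itemize}
\item The boundary term $\sigma/V'(w)$ is $\mathcal{O}(|w|^{1-D})$.
\item The other boundary term and the remainder are controlled by $|\mathrm{e}^{\sigma(V(t)-V(w))}|\le1$ (\cref{progressivity}) and $V''/V'^2=\mathcal{O}(|t|^{-D})$.
\end{itemize}
This is \cref{KVestimate}, and it gives $|K(z,w)|\le C|w|^{1-D}$ uniformly, whether or not there is a tangency. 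Then $\int_{\Gamma_{k,z}}|w|^{1-D}\,|\mathrm{d}w|\le C|z|^{2-D}$. The Volterra structure then gives, at once and with no small operator norm needed:
\begin{itemize}
\item convergence of the Neumann series with $1/j!$ bounds;
\item uniqueness among bounded solutions;
\item the sharp $\mathcal{O}(|z|^{2-D})$ error.
\end{itemize}
In your ordering, integrating by parts in $w$ would instead bring in $F_k'$. That is why the paper obtains $F_k'=\mathcal{O}(|z|^{1-D})$ only afterwards, by bootstrapping in \cref{Fkprimebound}. Your continuation step, and your reduction of uniqueness to the homogeneous integral equation, are fine once this estimate replaces yours.
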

For $z \in \Omega_k$ and $w \in \Gamma_{k,z}$, define
$$K_{(-1)^k V}(z,w) = \int_z^w \mathrm{e}^{(-1)^k  \{ V(t)-  V(w) \} } \, \mathrm{d}t$$
where the contour runs along $\Gamma_{k,z}$.
\begin{lemma}\label{KVestimate} For $z \in \Omega_k$ and $w \in \Gamma_{k,z}$, there is a constant $C > 0$ independent of $z$ and $w$ such that
\begin{align*}
|K_{(-1)^k V}(z,w)| \leq C \frac{1}{|w|^{D-1}} & & \forall z \in \Omega_k \text{ and } w \in \Gamma_{k,z} \, .
\end{align*}
\end{lemma}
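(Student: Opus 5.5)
We prove the bound by working in the $\varphi$-plane, where $V$ becomes the monomial $\gamma\varphi^D$. We then control the integral by the monotonicity of \cref{progressivity} together with one integration by parts. Write $\epsilon = (-1)^k$. Along $\Gamma_{k,z}$ the parametrisation gives $t = \tau_{k,z}(s)$. By \cref{progressivity}, $\Re\{\epsilon V(t)\} \le \Re\{\epsilon V(w)\}$ for every $t$ on the portion of $\Gamma_{k,z}$ between $z$ and $w$. Hence the integrand $\mathrm{e}^{\epsilon\{V(t)-V(w)\}}$ has modulus at most $1$. This crude bound only gives $|K|\le \mathrm{length}$, which grows, so a sharper argument is needed.

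The key step is to integrate by parts using $\mathrm{e}^{\epsilon V(t)} = \frac{\mathrm{d}}{\mathrm{d}t}\mathrm{e}^{\epsilon V(t)}/(\epsilon V'(t))$:
\begin{align*}
K_{\epsilon V}(z,w) = \frac{\epsilon}{V'(w)} - \frac{\epsilon\,\mathrm{e}^{\epsilon\{V(z)-V(w)\}}}{V'(z)} + \epsilon\int_z^w \frac{V''(t)}{V'(t)^2}\mathrm{e}^{\epsilon\{V(t)-V(w)\}}\,\mathrm{d}t .
\end{align*}
We bound each term separately.

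The first term is $\mathcal{O}(|w|^{1-D})$, since $|V'(w)| \gtrsim |w|^{D-1}$ for $|w|$ large. This holds because $\Gamma_{k,z}$ stays in $|\varphi|\ge\varrho$, and by \eqref{phizratio} we have $|t|\ge \varrho/2$ on it.

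For the second term, \cref{progressivity} gives a modulus at most $1/|V'(z)|$. We need this to be bounded by $C|w|^{1-D}$, which requires a genuine comparison between $|w|$ and $|z|$. On the arc part ($s\in[0,1]$) we have $|\varphi(w)|=|\varphi(z)|$, so $|w|\le 4|z|$ by \eqref{phizratio}, and the bound follows. On the ray part we have $|\varphi(w)| = s|\varphi(z)|$, and the exponential factor is $\exp(-|V(z)|(s^D-\cos\theta))$ times the arc contribution. Here $\theta=D(\arg\varphi(z)-\pi k/D)$ and we use $\Re\{\epsilon V\}$ computed as in \cref{progressivity}. When $s^D$ is large this decays superpolynomially in $s$, which absorbs the factor $s^{D-1}$ needed to convert $|z|^{1-D}$ into $|w|^{1-D}$.

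The case $\cos\theta$ close to $1$ is the delicate one. Then $|V(z)|(s^D-1)$ is small only for $s$ near $1$, where $|w|\asymp|z|$ anyway. We split into $s\le 2$, handled by the ratio $|w|\le 8|z|$, and $s>2$, handled by $\mathrm{e}^{-|V(z)|(2^D-1)}$ decay dominating powers of $s$, using $|V(z)|\ge \gamma\varrho^D$.

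For the third term we use $|V''/V'^2|\lesssim |t|^{-D}$. We treat the arc and ray pieces separately. On the arc the length is $\lesssim |z|$ and $|t|\asymp|z|$, giving $\lesssim |z|^{1-D}$, which is controlled as before. On the ray we write $\int_{|\varphi(z)|}^{|\varphi(w)|} r^{-D}\,\mathrm{d}r \lesssim |z|^{1-D}$ and again compare $|z|$ with $|w|$ as above.

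We expect the main obstacle to be making the passage from $|z|^{1-D}$ to $|w|^{1-D}$ uniform. The exponential weight must beat the polynomial ratio $(|w|/|z|)^{D-1}$. If the third term resists, the fallback is to insert the $s^D$ monotonicity more quantitatively on the ray. Concretely, $\Re\{\epsilon(V(t)-V(w))\}= -\gamma(|\varphi(w)|^D-|\varphi(t)|^D)$ exactly there, so the ray integral can be compared directly with $\int_0^{R}\mathrm{e}^{-\gamma(R^D-r^D)}\,\mathrm{d}r \lesssim R^{1-D}$, where $R=|\varphi(w)|$. This is a one-dimensional Laplace-type estimate near the endpoint, and it gives the $|w|^{1-D}$ bound directly. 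The assumption $\frac{(D-1)\varrho^{-D}}{\gamma D}\le\frac12$ is presumably designed for exactly this step, ensuring that the $V''/V'^2$ correction term can be absorbed.
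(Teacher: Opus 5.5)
You handle the two boundary terms correctly. Your $s\le 2$ versus $s>2$ split is the same device as the paper's subcases $|\varphi(z)|\ge\frac12|\varphi(w)|$ and $|\varphi(z)|\le\frac12|\varphi(w)|$. The gap is in your main argument for the third term on the ray: it fails as written.

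Once you bound $|\mathrm{e}^{\epsilon\{V(t)-V(w)\}}|$ by $1$ there, the majorant $\int_{|\varphi(z)|}^{|\varphi(w)|} r^{-D}\,\mathrm{d}r$ does not decay in $|w|$ at all. For example, take $|\varphi(z)|=\varrho$ fixed and $|w|\to\infty$: the majorant stays of order $\varrho^{1-D}$ while $|w|^{1-D}\to 0$. So no comparison of $|z|$ with $|w|$ can yield $|w|^{1-D}$. The comparison you used for the second term worked only because of the factor $\mathrm{e}^{\epsilon\{V(z)-V(w)\}}$, and that is exactly what you discarded here.

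The same caution applies to the arc piece of the third term. There you must keep $|\mathrm{e}^{\epsilon\{V(t)-V(w)\}}|\le|\mathrm{e}^{\epsilon\{V(\tilde z)-V(w)\}}|$, where $\tilde z=\varphi^{-1}(|\varphi(z)|\mathrm{e}^{\mathrm{i}\pi k/D})$; this holds by the monotonicity lemma. You then argue exactly as for the second term, and "$\lesssim |z|^{1-D}$, controlled as before" is only valid in that sense.

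Your fallback is therefore not optional: it is the actual proof of the ray estimate, and it is precisely the paper's argument for that segment. With $s=\varphi(t)$ and bounded Jacobian, the ray contribution is at most $C\,\mathrm{e}^{-\gamma R^D}\int_\varrho^R \mathrm{e}^{\gamma s^D}\,\mathrm{d}s$, where $R=|\varphi(w)|$. One integration by parts then gives $\int_\varrho^R \mathrm{e}^{\gamma s^D}\,\mathrm{d}s\le 2\,\mathrm{e}^{\gamma R^D}/(\gamma D R^{D-1})$. The correction term $\frac{(D-1)s^{-D}}{\gamma D}\mathrm{e}^{\gamma s^D}$ is absorbed into the left-hand side using the assumption on $\varrho$. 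So your guess about that assumption is right, but it acts in this one-dimensional estimate in the $\varphi$-variable, not on $V''/V'^2$ in the original variable.

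Once you have this Laplace-type bound, integrating by parts over the ray is superfluous. The paper splits $\Gamma_{k,z}$ at $\tilde z$ first and integrates by parts only on the arc, where $|t|\asymp|z|$. This gives a bound $C|\tilde z|^{1-D}|\mathrm{e}^{V(\tilde z)-V(w)}|$, which is then handled by the two subcases. The paper applies the Laplace-type estimate directly to the ray part of $K_{(-1)^kV}$ itself. With the exponential weight retained as above, your argument reduces to the paper's.
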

\begin{proof} Let us take $k$ to be even, since the proof will be identical for $k$ odd. Let us take the contour from $z$ to $w$ in the formula for $K_V(z,w)$ as being the same as $\Gamma_{k,z}$ except terminating at $w$ rather than extending to $\infty_k$. The contour consists of two parts, where $|\varphi(t)| = |\varphi(z)|$ and $|\varphi(t)| > |\varphi(z)|$.

Let us begin by considering the case when $|\varphi(w)| = |\varphi(z)|$. Writing $\mathrm{e}^{V(t)} = \frac{1}{V^\prime(t)} \frac{\mathrm{d}}{\mathrm{d}t}(\mathrm{e}^{V(t)})$ and integrating by parts we find
\begin{align*}
K_V(z,w) = \frac{1}{V^\prime(w)} - \frac{1}{V^\prime(z)} \mathrm{e}^{V(z) - V(w)} + \int_{z}^w \frac{V^{\prime\prime}(t)}{V^\prime(t)^2} \mathrm{e}^{V(t) - V(w)}\, \mathrm{d}t \, .
\end{align*}
By \cref{progressivity}, we have constructed our contour such that $|\mathrm{e}^{V(t) - V(w)}| \leq 1$ and $|\mathrm{e}^{V(z) - V(w)}| \leq 1$. Since $|\varphi(t)| = |\varphi(z)| = |\varphi(w)|$, by \eqref{phizratio} it follows that $|z|,|w|,|t|$ all differ from one another by no more than a factor of $4$. $|V^\prime(w)|^{-1} \leq C |w|^{-D+1}$, and likewise $|V^\prime(z)|^{-1} \leq 4^{D-1}C |w|^{-D+1} $. By similar arguments $|V^{\prime \prime}(t)| |V^\prime(t)|^{-2} \leq \widetilde{C} |w|^{-D} $ for some $\widetilde{C} > 0$.

Next, let us consider the case where $|\varphi(w)| > |\varphi(z)|$. Let us split the integral
\begin{align*}
K_V(z,w) = \int_z^{\varphi^{-1}(|\varphi(z)| \mathrm{e}^{\mathrm{i}\pi k /D})} \mathrm{e}^{V(t) - V(w)} \, \mathrm{d}t + \underbrace{\int_{\varphi^{-1}(|\varphi(z)| \mathrm{e}^{\mathrm{i}\pi k /D})}^w \mathrm{e}^{V(t) - V(w)} \, \mathrm{d}t}_{=: I_1} \, .
\end{align*}
For the first term we follow a similar procedure as before, and find that it may be bounded by something of the form $C | \widetilde{z} |^{1-D} |\mathrm{e}^{V(\widetilde{z}) - V(w)}|$ for some $C > 0$, where $\widetilde{z} := \varphi^{-1}(|\varphi(z)| \mathrm{e}^{\mathrm{i}\pi k /D})$. Note that by definition $|\varphi(z)| =|\varphi( \widetilde{z})|$. Then we may divide into two cases: (1) $|\varphi(z)| \geq \frac{1}{2}|\varphi(w)|$. In this case we may bound $|\mathrm{e}^{V(\widetilde{z}) - V(w)}| \leq 1$ and $|\widetilde{z}|^{1-D} \leq C |w|^{1-D}$ for some $C$. (2) $|\varphi(z)| \leq \frac{1}{2}|\varphi(w)|$. In this case, noting that $V(w)$ and $V(\widetilde{z})$ are both real and positive, we may bound $|\widetilde{z}|^{1-D}$ by a constant, and bound $\mathrm{e}^{V(\widetilde{z}) - V(w)} \leq \mathrm{e}^{-(1 - 2^{-D}) V(w)}$. Along the curve $\arg \varphi(w) = \pi k/D$ this goes to $0$ much faster than $|w|^{1-D}$ and so can certainly be bounded by this up to a constant.

For the second term, let us change variables $s = \varphi(t)$. Then we have
$$I_1 = \mathrm{e}^{-V(w)}\int_{|\varphi(z)|\mathrm{e}^{\mathrm{i}\pi k /D}}^{\varphi(w)} \mathrm{e}^{\gamma s^D} \, \frac{1}{\varphi^\prime(\varphi^{-1}(s))}\, \mathrm{d}s \, .$$
Note that in these new coordinates, the integration path is a straight line between the endpoints, and because these points have a common argument, a segment of the ray $\mathrm{e}^{\mathrm{i}\pi k /D}[0, +\infty)$. Again, on our neighbourhood of infinity, we have $| \varphi^\prime(\varphi^{-1}(s)) |^{-1} \leq C$ for some constant $C$. Hence
\begin{align*}
|I_1| \leq C \mathrm{e}^{-V(w)} \int_{|\varphi(z)|}^{|\varphi(w)|} \mathrm{e}^{\gamma s^D} \, \mathrm{d}s \leq  C 
\mathrm{e}^{-V(w)} \underbrace{\int_{\varrho}^{|\varphi(w)|} \mathrm{e}^{\gamma s^D} \, \mathrm{d}s}_{=: I_2} \, .
\end{align*}
Finally, we may perform an integration by parts on $I_2$ to get
\begin{align*}
I_2 &= \int_{\varrho}^{|\varphi(w)|} \frac{(D-1) s^{-D}}{\gamma D} \mathrm{e}^{\gamma s^D}\, \mathrm{d}s  + \frac{\mathrm{e}^{\gamma \varphi(w)^D}}{\gamma D |\varphi(w)|^{D-1}} - \frac{1}{\gamma D \varrho^{D-1}}\mathrm{e}^{\gamma \varrho^D} \\
&\leq \frac{(D-1) \varrho^{-D}}{\gamma D} I_2 + \frac{\mathrm{e}^{\gamma \varphi(w)^D}}{\gamma D |\varphi(w)|^{D-1}}  \, .
\end{align*}
We may choose $\varrho$ sufficiently large we can make $\frac{(D-1) \varrho^{-D}}{\gamma D} \leq \frac{1}{2}$, in which case we have
\begin{align*}
I_2 \leq  2\frac{\mathrm{e}^{\gamma \varphi(w)^D}}{\gamma D |\varphi(w)|^{D-1}}
\end{align*}
which proves the claim, upon observing that $|\varphi(w)| \geq \frac{1}{2}|w|$ and $\gamma \varphi(w)^D = V(w)$.
\end{proof}
Next, for $k \in [\![0, 2D-1 ]\!]$, consider the integral equation
\begin{align}\label{integralequation}
F_k(z) = 1+ \frac{1}{\lambda}\int_z^{\infty_k} K_{(-1)^k V}(z,w) F_k(w) \, \mathrm{d}w \, ,
\end{align}
for $F_k$ bounded on $\Omega_k$.
\begin{lemma} The integral equation \eqref{integralequation} has the unique solution for $z \in \Omega_k$
\begin{equation}\label{integralequationsolution}
\begin{split}
&F_k(z) = \\
&1 + \sum_{j=1}^{+\infty} \frac{1}{\lambda^j} \int^{\infty_k}_{z < w_j < w_{j-1} < \dots < w_1} K_{(-1)^k V}(z,w_j) K_{(-1)^k V}(w_j,w_{j-1}) \dots K_{(-1)^k V}(w_2, w_1) \, \mathrm{d}w_1 \dots  \mathrm{d}w_j
\end{split}
\end{equation}
where the contour of integration is $\Gamma_{k,z}$ and $a < b$ means that $a$ comes \enquote{before} $b$ according to the orientation of $\Gamma_{k,z}$. The series \eqref{integralequationsolution} converges absolutely and uniformly in $\Omega_k$, and in particular satisfies
\begin{align}\label{Fkbound}
F_k(z) = 1 + \mathcal{O}(|z|^{2-D}) &  & |z| \to +\infty \text{ for } z\in \Omega_k \, .
\end{align}
\end{lemma}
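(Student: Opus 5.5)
This is a Volterra-type integral equation, so I will solve it by Picard iteration (a Neumann series) and control the iterated kernels with \cref{KVestimate}. Define the operator $(\mathcal{K}G)(z) = \lambda^{-1}\int_z^{\infty_k} K_{(-1)^kV}(z,w)G(w)\,\mathrm{d}w$, where the integral runs along $\Gamma_{k,z}$. Then \eqref{integralequation} reads $F_k = 1 + \mathcal{K}F_k$, and the series \eqref{integralequationsolution} is exactly $\sum_{j\ge 0}\mathcal{K}^j 1$ with the iterated integrals written out.

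The first point is that the iterated contours are consistent. If $w \in \Gamma_{k,z}$, then $w \in \Omega_k$, and $\Gamma_{k,w}$ is the tail of $\Gamma_{k,z}$ beyond $w$. Indeed, once the path has reached the ray $\arg\varphi = \pi k/D$ it stays on it, and on the arc portion the tail of the arc is again of the same form. So every nested integral lives on the single contour $\Gamma_{k,z}$, ordered as stated.

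Next I bound the terms. Parametrise $\Gamma_{k,z}$ by arclength-type measure $|\mathrm{d}w|$ and set $m(w) = \int_w^{\infty_k} C|u|^{1-D}\,|\mathrm{d}u|$ along the contour. By \cref{KVestimate}, $|K(w_{i+1},w_i)| \le C|w_i|^{1-D}$. The ordered $j$-fold integral is then bounded by the standard Volterra estimate
\[
\frac{1}{\lambda^j}\int_{z<w_j<\dots<w_1}\prod_i C|w_i|^{1-D}\,|\mathrm{d}w_i| = \frac{m(z)^j}{\lambda^j\, j!}.
\]
It remains to bound $m(z)$ uniformly. The arc part of $\Gamma_{k,z}$ has length $\lesssim |z|$ (an arc of angle at most $\pi/D$ at radius $|\varphi(z)|$, pulled back by $\varphi^{-1}$, which has bounded derivative). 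There $|u| \asymp |z|$ by \eqref{phizratio}, so the arc contributes $\lesssim |z|^{2-D}$. The ray part contributes $\int_{|z|/4}^\infty r^{1-D}\,\mathrm{d}r \lesssim |z|^{2-D}$. Hence $m(z) \le C'|z|^{2-D} \le C'\varrho^{2-D}$ on $\Omega_k$, using $D \ge 4$.

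From these bounds the series converges absolutely and uniformly on $\Omega_k$, with $|F_k(z) - 1| \le e^{m(z)/\lambda} - 1 \le C''|z|^{2-D}$, which is \eqref{Fkbound}. To see that the sum solves the equation, apply $\mathcal{K}$ term by term; this is justified by dominated convergence from the same bounds, since $\mathcal{K}(\mathcal{K}^j1) = \mathcal{K}^{j+1}1$ by the nesting property above.

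For uniqueness among bounded solutions, suppose $G$ is the difference of two bounded solutions. Then $G = \mathcal{K}G$, hence $G = \mathcal{K}^jG$ for all $j$, so $|G(z)| \le \|G\|_\infty\, m(z)^j/(\lambda^j j!) \to 0$. Thus $G = 0$.

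I expect the main obstacle to be the nesting/ordering claim and the arclength bound on the arc portion of the contour. The Volterra factorial bound genuinely needs the kernel estimate of \cref{KVestimate} to depend only on the later variable $w_i$ — which it does — together with all the points lying on one oriented contour. So the geometric check that $\Gamma_{k,w} \subset \Gamma_{k,z}$ for $w \in \Gamma_{k,z}$ must be done carefully, including at the junction $s = 1$.
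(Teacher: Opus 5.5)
Your proposal is correct and takes essentially the same route as the paper: a Neumann series controlled by the kernel bound of \cref{KVestimate}, the ordered-simplex factorial estimate giving an exponential majorant, the bound $\int_{\Gamma_{k,z}}|w|^{1-D}\,|\mathrm{d}w| \lesssim |z|^{2-D}$, and uniqueness by iterating the homogeneous inequality. You also check explicitly that $\Gamma_{k,w}$ is the tail of $\Gamma_{k,z}$ for $w \in \Gamma_{k,z}$, which is what lets the kernel estimate apply to every factor $K(w_{i+1},w_i)$; the paper uses this only implicitly, so it is a worthwhile addition.
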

\begin{proof}
We first show that \eqref{integralequationsolution} converges absolutely and uniformly.
By \cref{KVestimate} we have
\begin{align*}
&1 + \sum_{j=1}^{+\infty} \frac{1}{\lambda^j} \int^{\infty_k}_{z < w_j < w_{j-1} < \dots < w_1} |K_{(-1)^k V}(z,w_j)| |K_{(-1)^k V}(w_j,w_{j-1})| \dots |K_{(-1)^k V}(w_2, w_1)| \, |\mathrm{d}w_1| \dots  |\mathrm{d}w_j| \\
&\leq  1 + \sum_{j=1}^{+\infty} \frac{1}{\lambda^j} C^j \int^{\infty_k}_{z < w_j < w_{j-1} < \dots < w_1} \prod_{p=1}^j \frac{1}{|w_p|^{D-1}} \, |\mathrm{d}w_1| \dots  |\mathrm{d}w_j| \\
&=  1 + \sum_{j=1}^{+\infty} \frac{1}{j!} C^j \Big( \frac{1}{\lambda^j}  \int_{\Gamma_{k,z}} \frac{1}{|w|^{D-1}} \, |\mathrm{d}w|  \Big)^j = \exp\Big( C \lambda^{-1}  \int_{\Gamma_{k,z}} \frac{1}{|w|^{D-1}} \, |\mathrm{d}w| \Big)  \, .
\end{align*}
Using $\varphi$ to change variables, and then rescaling the variables by $|\varphi(z)|$ we find an integral that is convergent and independent of $z$, hence
$$\int_{\Gamma_{k,z}} \frac{1}{|w|^{D-1}} \, |\mathrm{d}w| \leq \frac{\widetilde{C}}{|z|^{D-2}} \, . $$
This confirms that the series \eqref{integralequationsolution} is absolutely and uniformly convergent, and by the same method one easily sees that $F_k(z) = 1 +\mathcal{O}(|z|^{2-D})$. Finally, one need only substitute \eqref{integralequationsolution} into \eqref{integralequation} to see that it is indeed a solution.

Finally, we show uniqueness. Let $\widetilde{F}_k$ be another solution to \eqref{integralequation} and consider the difference $\delta_k = F_k - \widetilde{F}_k$. Then $\delta_k$ solves the integral equation
\begin{align*}
\delta_k(z) =  \frac{1}{\lambda}\int_z^{\infty_k} K_V(z,w) \delta_k(w) \, \mathrm{d}w \, .
\end{align*}
Let us take $z \in \Omega_k$. Hence, by \cref{KVestimate},
\begin{equation}\label{Deltainequality}
|\delta_{k}(z)| \leq \frac{C}{\lambda} \int_{z}^{\infty_k} |w|^{1-D} |\delta_k(w)| |\mathrm{d}w| \, .
\end{equation}
If we repeatedly substitute \eqref{Deltainequality} into itself we find 
\begin{align*}
|\delta_{k}(z)| &\leq ( C \lambda^{-1})^n \int_{z < w_1 < \dots < w_{n}}^{\infty_k} |w_1|^{1-D} | \dots |w_n|^{1-D} |\delta_k(w_n)| |\mathrm{d}w_1| \dots |\mathrm{d}w_n| \\
&= \frac{( C \lambda^{-1})^n}{(n-1)!} \int_z^{\infty_k} \Big( \int_{z}^w |\widetilde{w}|^{1-D} |\mathrm{d}\widetilde{w}| \Big)^{n-1} |w|^{1-D} |\delta_k(w)| \, |\mathrm{d}w| \\
&\leq \frac{( C \lambda^{-1})^n}{(n-1)!} \Big( \int_{z}^{\infty_k} |\widetilde{w}|^{1-D} \, |\mathrm{d}\widetilde{w} | \Big)^{n-1}  \int_z^{\infty_k} |w|^{1-D} |\delta_k(w)| \, |\mathrm{d}w| \, \overset{n \to +\infty}{\longrightarrow} 0 \, .
\end{align*}
This completes the proof of uniqueness.
\end{proof}
\begin{corollary}\label{entireness}
$F_k$ is analytic on the interior of $\Omega_k$ and extends to an entire function in $\mathbb{C}$, and solves \eqref{FkODE}.
\end{corollary}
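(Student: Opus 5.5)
Write $\sigma = (-1)^k$. The plan has three parts: show that $F_k$ is analytic on the interior of $\Omega_k$, verify the ODE there, and then extend to all of $\mathbb{C}$ using the fact that the ODE is linear with polynomial coefficients.

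\textbf{Analyticity on the interior.} I would first remove the dependence of the integration contour on $z$. By definition, $\Gamma_{k,z}$ consists of an arc of $|\varphi(t)|=|\varphi(z)|$ from $z$ to $\widetilde z = \varphi^{-1}(|\varphi(z)|\mathrm{e}^{\mathrm{i}\pi k/D})$, followed by the ray to $\infty_k$.

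Fix $z_0$ in the interior of $\Omega_k$. For $z$ in a small disc around $z_0$, Cauchy's theorem lets me replace $\Gamma_{k,z}$ in each iterated integral of \eqref{integralequationsolution} by the fixed contour $\Gamma_{k,z_0}$ preceded by a short segment from $z$ to $z_0$. This is legitimate because:
\begin{itemize}
\item the integrands $K_{\sigma V}(w_{p+1},w_p)$ are entire in both variables, since they are integrals of entire functions between endpoints;
\item the tails at $\infty_k$ are controlled by the bound of \cref{KVestimate}, which persists on slightly deformed contours staying inside $\Omega_k$ (the same integration-by-parts estimates apply).
\end{itemize}
Each term of the series is then an integral of an analytic function of $z$ over a fixed contour, so it is analytic in $z$. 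The convergence estimate in the proof of the preceding lemma is locally uniform, so Weierstrass' theorem gives analyticity of $F_k$ near $z_0$.

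\textbf{The ODE on the interior.} Here I differentiate the integral equation \eqref{integralequation}. Since $K_{\sigma V}(z,z)=0$, I get
\[
F_k'(z) = \frac{1}{\lambda}\int_z^{\infty_k} \partial_z K_{\sigma V}(z,w)\,F_k(w)\,\mathrm{d}w,
\qquad
\partial_z K_{\sigma V}(z,w) = -\mathrm{e}^{\sigma(V(z)-V(w))}.
\]
Hence
\[
\mathrm{e}^{-\sigma V(z)}F_k'(z) = -\frac{1}{\lambda}\int_z^{\infty_k}\mathrm{e}^{-\sigma V(w)}F_k(w)\,\mathrm{d}w .
\]
Differentiating once more gives $(\mathrm{e}^{-\sigma V}F_k')' = \lambda^{-1}\mathrm{e}^{-\sigma V}F_k$. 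Expanding the left side yields $F_k'' - \sigma V' F_k' = \lambda^{-1}F_k$, which is exactly \eqref{FkODE}. The interchange of differentiation and integration is justified by the uniform convergence established above.

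\textbf{Extension to an entire function.} \eqref{FkODE} is a linear second-order ODE with polynomial (hence entire) coefficients and leading coefficient $-1$. By the standard existence theorem for linear ODEs in the complex domain, the solution with Cauchy data $(F_k(z_0),F_k'(z_0))$ at an interior point $z_0$ extends analytically along every path in $\mathbb{C}$. Because $\mathbb{C}$ is simply connected, the monodromy theorem makes this continuation single-valued, so it defines an entire function. By the identity theorem it agrees with $F_k$ on the interior of $\Omega_k$, and by continuity on all of $\Omega_k$, since $F_k$ is continuous there by uniform convergence of the series.

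\textbf{Main obstacle.} The delicate step is the contour deformation used for analyticity. I must check that replacing $\Gamma_{k,z}$ by $\Gamma_{k,z_0}$ preserves the bound of \cref{KVestimate} uniformly for $z$ near $z_0$, in particular that the monotonicity of $\Re V$ from \cref{progressivity} still holds along the deformed path. If that proves awkward, the fallback is a Morera/Fubini argument: continuity of $F_k$ in $z$ (from uniform convergence) together with vanishing of its integrals over small triangles would give analyticity without any explicit deformation.
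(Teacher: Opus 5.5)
Your proposal is correct and follows the same three-step route as the paper. First, each term of \eqref{integralequationsolution} is analytic and the series converges uniformly on $\Omega_k$. Next, differentiating \eqref{integralequation} gives \eqref{FkODE}. Finally, the existence theorem for linear ODEs with entire coefficients gives the entire extension.

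You add detail the paper skips, since it simply calls the partial sums ``clearly analytic''. The obstacle you flag is harmless: $\Gamma_{k,z}$ and $\Gamma_{k,z_0}$ coincide along the ray $\arg\varphi = \pi k/D$ beyond radius $\max(|\varphi(z)|,|\varphi(z_0)|)$. So your deformation only changes the contour by a bounded loop, and the convergence bound can still be computed along $\Gamma_{k,z}$.
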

\begin{proof}
Since the partial sums of \eqref{integralequationsolution} are clearly analytic, and the series converges uniformly on $\Omega_k$, the resulting function $F_k$ is analytic on the interior of $\Omega_k$. By differentiating \eqref{integralequation}, one sees that $F_k$ satisfies \eqref{FkODE}. This is a second order ODE in which every point in $\mathbb{C}$ is an ordinary point, hence by the existence and uniqueness theorem for linear ODEs in the complex domain (see Theorem 3.1. of \cite{olver1997asymptotics}, Theorem 6.1 and Proposition 6.1. of \cite{millerapplied}), the solution extends to an entire function on $\mathbb{C}$.
\end{proof}
\begin{remark} In fact, if we had worked a bit harder we could have defined a contour such that the series \eqref{integralequationsolution} converges for all $z \in \mathbb{C}$. However this will not be necessary for us.
\end{remark}
This concludes the proof of Proposition \ref{FkODEprop}.

\begin{proposition}[Sibuya solutions]\label{sibuya}
Define, for $k \in [\![ 0, 2D-1 ]\!]$
\begin{align}
Y_k(z) \overset{\mathrm{def}}{=} \mathrm{i}^k \mathrm{e}^{-\mathrm{i}\pi/4}   \begin{cases} \mathrm{e}^{-\frac{1}{2}V(z)}   F_k(z) & k \text{ even } \\
\lambda \,  \mathrm{e}^{\frac{1}{2}V(z)} F_k^\prime (z) & k \text{ odd } \, .
\end{cases}
\end{align}
Then $Y_k$ solves the second order ODE
\begin{align}\label{schroedinger}
Y_k^{\prime \prime}(z) = U(z) Y_k(z) \, \text{  where  } \,  U(z) = \frac{1}{4}V^\prime(z)^2 - \frac{1}{2}V^{\prime\prime}(z) + \lambda^{-1} \, .
\end{align}
\end{proposition}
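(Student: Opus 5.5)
The proof is a direct computation in two cases according to the parity of $k$. It uses only the ODE \eqref{FkODE} from \cref{FkODEprop}, in the form extended to all of $\mathbb{C}$ by \cref{entireness}. The constant prefactor $\mathrm{i}^k \mathrm{e}^{-\mathrm{i}\pi/4}$, and the factor $\lambda$ in the odd case, are irrelevant because \eqref{schroedinger} is linear, so we drop them.

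\emph{Case $k$ even.} Here $F_k$ satisfies $V^\prime F_k^\prime - F_k^{\prime\prime} + \lambda^{-1}F_k = 0$, which is exactly \eqref{Fequation}. The substitution $\psi = F\mathrm{e}^{-\frac12 V}$ was already observed to turn this into \eqref{psiequation}, and we simply verify it. Write $Y = \mathrm{e}^{-\frac12 V}F_k$. Differentiating twice gives
\[
Y^{\prime\prime} = \mathrm{e}^{-\frac12 V}\Big(F_k^{\prime\prime} - V^\prime F_k^\prime + \big(\tfrac14 V^{\prime 2} - \tfrac12 V^{\prime\prime}\big)F_k\Big).
\]
Using the ODE to replace $F_k^{\prime\prime} - V^\prime F_k^\prime$ by $\lambda^{-1}F_k$, we get $Y^{\prime\prime} = UY$.

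\emph{Case $k$ odd.} Now $F_k$ satisfies $-V^\prime F_k^\prime - F_k^{\prime\prime} + \lambda^{-1}F_k = 0$. The key step is to pass to $G = F_k^\prime$ and find the ODE it satisfies. Differentiating the ODE once gives
\[
G^{\prime\prime} = -V^\prime G^\prime - V^{\prime\prime}G + \lambda^{-1}G .
\]
Set $Y = \mathrm{e}^{\frac12 V}G$. Then
\[
Y^{\prime\prime} = \mathrm{e}^{\frac12 V}\Big(G^{\prime\prime} + V^\prime G^\prime + \big(\tfrac14 V^{\prime 2} + \tfrac12 V^{\prime\prime}\big)G\Big).
\]
Substituting the expression for $G^{\prime\prime}$, the $V^\prime G^\prime$ terms cancel. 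The coefficient of $G$ becomes $-V^{\prime\prime} + \lambda^{-1} + \tfrac14 V^{\prime 2} + \tfrac12 V^{\prime\prime} = U$, so $Y^{\prime\prime} = UY$.

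The only point needing care is the odd case. The naive substitution $\mathrm{e}^{\frac12 V}F_k$ would produce the potential $\tfrac14 V^{\prime2} + \tfrac12 V^{\prime\prime} + \lambda^{-1}$, which has the wrong sign on $V^{\prime\prime}$. Differentiating first is what flips that sign, and this is the supersymmetric-partner mechanism behind \eqref{factorisation}. Beyond this choice, everything is routine differentiation valid on all of $\mathbb{C}$, since $F_k$ is entire.
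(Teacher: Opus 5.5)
Your computation is correct in both parity cases and is exactly the direct substitution that the paper's one-line proof refers to. Differentiating the odd-case ODE before conjugating by $\mathrm{e}^{\frac12 V}$ is equivalent to the paper's remark that $G=\mathrm{e}^{V}F_k'$ satisfies \eqref{Fequation}, which reduces $\mathrm{e}^{-\frac12 V}G=\mathrm{e}^{\frac12 V}F_k'$ to the even case.
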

\begin{proof}
Direct substitution.
\end{proof}
What we are calling \enquote{Sibuya} solutions are often also called \enquote{subdominant} or \enquote{recessive} solutions in the literature. It will turn out that these $Y_k$ are precisely the solutions obeying Liouville--Green asymptotics that we mentioned earlier.
\begin{remark}\leavevmode
\begin{enumerate}[label=(\arabic*)]
\item The prefactor $\mathrm{i}^k \mathrm{e}^{-\mathrm{i}\pi/4}$ is chosen so that later in our analysis the Wronskian of $Y_k$ and $Y_{k+1}$ will come out to be $1$.
\item We note that because $D$ is even, $2D$ is a multiple of $4$. Hence if we allow $k \in \mathbb{Z}$ and assume $F_k$ is periodic in $2D$ then $Y_k$ is also periodic in $2D$. However if $D$ had been odd then $Y_{k+2D} = - Y_k$. This will imply that
\begin{align}\label{cyclicity}
S_{2D} S_{2D-1} \dots S_{1} = (-1)^D \mathbb{I}
\end{align}
where $S_k$ is the $k$th Stokes matrix which will be defined in \eqref{stokesrelation}.
\item The formula for $Y_k$ for $k$ odd may appear strange and unmotivated to the reader. The motivation is the following. In order to have an integral equation that functions also as an asymptotic expansion, we needed to change the sign of $V$ for $k$ odd, which explains the $(-1)^k$ factor in \eqref{FkODE}. Next, if one has a solution $F$ to $$-V^\prime F^\prime - F^{\prime\prime} + \lambda^{-1}F = 0$$ then the equation has the natural integrating factor $\mathrm{e}^V$ so that it may be written $( \mathrm{e}^V F^\prime)^\prime = \lambda^{-1}\mathrm{e}^V F$. Thus if we introduce $G := \mathrm{e}^V F^\prime$ then we have the first order system for $(F,G)$
\begin{align*}
F^\prime = \mathrm{e}^{-V} G \, , & & G^\prime = \lambda^{-1} \mathrm{e}^V F \, .
\end{align*}
If we eliminate $G$ then we obtain our original ODE for $F$; however if we eliminate $F$ we obtain the ODE
$$V^\prime G^{\prime} - G^{\prime \prime} + \lambda^{-1}G = 0 \, .$$
This \enquote{duality} follows from the theory of quasi-derivatives in Sturm--Liouville theory (see Ch. 2 of \cite{zettl2005sturm}). Hence from a solution to \eqref{FkODE} we may construct, by duality, a solution to \eqref{Fequation}.
\end{enumerate}
\end{remark}
Thus $Y_k$ satisfies a time-independent Schrödinger equation with potential $\frac{1}{4}V^\prime(z)^2 - \frac{1}{2}V^{\prime\prime}(z)$ and spectral parameter $-\lambda^{-1}$. However we should emphasise that these functions $Y_k$ will, for generic $\lambda$, tend to diverge at $\infty_j$ for all $j \neq k$ and so in general cannot be regarded as eigenfunctions, though they could be regarded as \enquote{off shell} eigenfunctions.

Moving forward, we need the following useful bound.
\begin{lemma}\label{crudebound} Let $z \in \Omega_k$. Then $\int_{\Gamma_{k,z}} |\mathrm{e}^{(-1)^k \{ V(z) - V(w) \} }| | \mathrm{d}w| \leq C|z|$ for some constant $C$ and $|z|$ sufficiently large.
\end{lemma}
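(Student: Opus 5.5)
We bound the integral $\int_{\Gamma_{k,z}} |\mathrm{e}^{(-1)^k \{ V(z) - V(w) \}}| \, |\mathrm{d}w|$ by splitting the contour $\Gamma_{k,z}$ into its two natural pieces, the arc $\tau_{k,z}([0,1])$ on which $|\varphi(w)| = |\varphi(z)|$, and the ray $\tau_{k,z}([1,+\infty))$ on which $\arg \varphi(w) = \pi k/D$. Throughout, take $k$ even; the odd case is identical after replacing $V$ by $-V$, exactly as in the proof of \cref{KVestimate}. The only input we need is \cref{progressivity}, which gives $\Re V(w) \geq \Re V(z)$ for every $w \in \Gamma_{k,z}$. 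Hence the integrand satisfies $|\mathrm{e}^{V(z) - V(w)}| \leq 1$ on the whole contour.

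\textbf{The arc.} On the arc the integrand is bounded by $1$, so its contribution is at most the arc's length. In the $\varphi$-plane the arc has length at most $|\varphi(z)| \cdot |\arg \varphi(z) - \pi k/D| \leq \pi |\varphi(z)|/D \leq 2\pi |z|/D$, using \eqref{phizratio}. Pulling back by $\varphi^{-1}$ multiplies lengths by at most $\sup |(\varphi^{-1})^\prime|$. This supremum is bounded on the neighbourhood of infinity, since $\varphi(z) = z(1+\mathcal{O}(z^{-1}))$ implies $\varphi^\prime \to 1$. The arc therefore contributes $\mathcal{O}(|z|)$.

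\textbf{The ray.} Change variables to $s = \varphi(w) = r\,\mathrm{e}^{\mathrm{i}\pi k/D}$ with $r \geq R := |\varphi(z)|$. Along this ray $V(w) = \gamma s^D = \gamma r^D$ is real and positive. Write $\Re V(z) = \gamma R^D \cos\theta$, where $\theta = D\{\arg\varphi(z) - \pi k/D\} \in [-\pi,\pi]$, as in the proof of \cref{progressivity}. The ray contribution is then bounded by
\[
C\int_R^\infty \mathrm{e}^{\gamma R^D \cos\theta - \gamma r^D}\,\mathrm{d}r \leq C\int_R^\infty \mathrm{e}^{\gamma R^D - \gamma r^D}\,\mathrm{d}r ,
\]
where the Jacobian $|1/\varphi^\prime|$ has been absorbed into $C$.

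\textbf{The main obstacle.} The factor $\mathrm{e}^{\gamma R^D \cos\theta}$ can be as large as $\mathrm{e}^{\gamma R^D}$, so the ray integral must be controlled without relying on decay of that factor. Substitute $r = R + u$ and use $r^D - R^D \geq D R^{D-1} u$. The ray integral is then at most
\[
C\int_0^\infty \mathrm{e}^{-\gamma D R^{D-1} u}\,\mathrm{d}u = \frac{C}{\gamma D R^{D-1}} .
\]
This is bounded, and in fact even $\mathcal{O}(|z|^{1-D})$.

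Adding the two contributions gives a total of $\mathcal{O}(|z|)$ for $|z|$ large, which proves the claim.
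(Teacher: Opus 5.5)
Your proof is correct and follows the same route as the paper. You split $\Gamma_{k,z}$ into the arc, bounded by its length $\mathcal{O}(|z|)$ via \cref{progressivity}, and the ray, bounded by $\mathcal{O}(|z|^{1-D})$. The only differences are cosmetic: on the ray you use the convexity bound $r^D - R^D \geq D R^{D-1}(r-R)$ where the paper integrates by parts, and you make explicit the estimate $\mathrm{e}^{\Re V(z)} \leq \mathrm{e}^{\gamma |\varphi(z)|^D}$, which the paper uses tacitly.
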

\begin{proof} For simplicity assume $k$ is even, since the proof is identical for $k$ odd except for reversing the sign of $V$.
\begin{align*}
\int_{\Gamma_{k,z}} |\mathrm{e}^{V(z) - V(w)}| | \mathrm{d}w| = \underbrace{\int_{z}^{\varphi^{-1}(|\varphi(z)| \mathrm{e}^{\mathrm{i}\pi k /D})} |\mathrm{e}^{V(z) - V(w)}| | \mathrm{d}w|}_{=: J_1} + \underbrace{\int_{\varphi^{-1}(|\varphi(z)| \mathrm{e}^{\mathrm{i}\pi k /D})}^{\infty_k} |\mathrm{e}^{V(z) - V(w)}| | \mathrm{d}w| }_{=: J_2} \, .
\end{align*}
Let us change variables to $\varphi(w)$, for which the Jacobian is bounded by a constant. Hence for $J_1$ we observe that the integrand has modulus at most $1$ and the path has arc length at most $2\pi |\varphi(z)|$. Hence $J_1 \leq C|z|$. For $J_2$ we perform an integration by parts
\begin{align*}
J_2 \leq C |\mathrm{e}^{V(z)}| \int_{|\varphi(z)|}^{+\infty} \mathrm{e}^{-\gamma s^D} \, \mathrm{d}s = - C |\mathrm{e}^{V(z)}| \int_{|\varphi(z)|}^{+\infty} \frac{1}{D\gamma s^{D-1}} \frac{\mathrm{d}}{\mathrm{d}s}( \mathrm{e}^{-\gamma s^{D}}) \, \mathrm{d}s \\
\leq \frac{C}{D\gamma |\varphi(z)|^{D-1}} + C |\mathrm{e}^{V(z)}| \int_{|\varphi(z)|}^{+\infty} \underbrace{\frac{\mathrm{d}}{\mathrm{d}s}\big( \frac{1}{D\gamma s^{D-1}} \big)}_{\leq 0} \mathrm{e}^{-\gamma s^{D}}\, \mathrm{d}s \, .
\end{align*}
Hence $J_2 \leq \widetilde{C}|z|^{1-D}$ for some $\widetilde{C} > 0$ which completes the proof.
\end{proof}
This is a very crude estimate but it suffices for our purposes.
\begin{lemma}\label{Fkprimebound} For $z \in \Omega_k$, we have $F_k^\prime(z) = (-1)^{k+1} \lambda^{-1} \frac{1}{V^\prime(z)}(1 + \mathcal{O}(|z|^{-D+2}))$ as $|z| \to +\infty$. 
\end{lemma}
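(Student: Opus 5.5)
Differentiate the integral equation \eqref{integralequation} directly. Since $K_{(-1)^kV}(z,w)=\int_z^w \mathrm{e}^{(-1)^k\{V(t)-V(w)\}}\,\mathrm{d}t$, we have $\partial_z K_{(-1)^kV}(z,w) = -\mathrm{e}^{(-1)^k\{V(z)-V(w)\}}$, and $K(z,z)=0$, so the boundary term vanishes and
\begin{align*}
F_k^\prime(z) = -\frac{1}{\lambda}\int_z^{\infty_k} \mathrm{e}^{(-1)^k\{V(z)-V(w)\}} F_k(w)\,\mathrm{d}w \, ,
\end{align*}
with the contour $\Gamma_{k,z}$. The differentiation under the integral is justified by the absolute convergence established in \cref{KVestimate} and \cref{crudebound}, together with analyticity and path independence. (Alternatively, this identity follows by integrating $(\mathrm{e}^{(-1)^{k+1}V}F_k^\prime)^\prime = (-1)^{k+1}\lambda^{-1}\mathrm{e}^{(-1)^{k+1}V}F_k$ from $z$ to $\infty_k$. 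I will check that the boundary term at $\infty_k$ vanishes and that the sign comes out as stated.)

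Next split $F_k(w) = 1 + (F_k(w)-1)$. The main term is $-\lambda^{-1}\mathrm{e}^{(-1)^kV(z)}\int_z^{\infty_k}\mathrm{e}^{(-1)^{k+1}V(w)}\,\mathrm{d}w$. Writing $\mathrm{e}^{(-1)^{k+1}V(w)} = \frac{(-1)^{k+1}}{V^\prime(w)}\frac{\mathrm{d}}{\mathrm{d}w}\mathrm{e}^{(-1)^{k+1}V(w)}$ and integrating by parts gives the boundary contribution at $z$, namely $(-1)^{k+1}\lambda^{-1}\frac{1}{V^\prime(z)}$, after tracking the signs carefully. The boundary term at $\infty_k$ vanishes by decay along the ray. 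What remains is an integral of $\frac{V^{\prime\prime}(w)}{V^\prime(w)^2}\mathrm{e}^{(-1)^k\{V(z)-V(w)\}}$ over $\Gamma_{k,z}$.

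The key step, and the main obstacle, is bounding this remainder and the error term by $\mathcal{O}(|z|^{1-D}\cdot|z|^{2-D})$, that is, relative error $|z|^{2-D}$.

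For the remainder, I would use $|V^{\prime\prime}/V^{\prime 2}|\le C|w|^{-D}\le C'|z|^{-D}$ on $\Gamma_{k,z}$. This holds because $|\varphi(w)|\ge|\varphi(z)|$ along the contour, so $|w|\gtrsim|z|$ by \eqref{phizratio}. Combined with \cref{crudebound}, this gives a bound $C|z|^{-D}\cdot|z| = C|z|^{1-D}$, which is only $\mathcal{O}(1/V^\prime)$ and not good enough.

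So instead I would redo the integration by parts once more, or equivalently split $\Gamma_{k,z}$ into the arc piece and the ray piece as in \cref{KVestimate}:
\begin{itemize}
\item On the arc, integrating by parts $\frac{V^{\prime\prime}}{V^{\prime2}}\mathrm{e}^{\dots}$ gains another factor $1/V^\prime$, giving a contribution $\mathcal{O}(|z|^{-2D+2})$. Here $|\mathrm{e}^{(-1)^k\{V(z)-V(w)\}}|\le 1$ by \cref{progressivity}.
\item On the ray, the Laplace-type estimate of $J_2$ in \cref{crudebound} gives $\int|\mathrm{e}^{\dots}||\mathrm{d}w|\lesssim |z|^{1-D}$, so the contribution is $\mathcal{O}(|z|^{-2D+1})$.
\end{itemize}

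For the error term $\int \mathrm{e}^{\dots}(F_k(w)-1)\,\mathrm{d}w$, I would use \eqref{Fkbound}, $|F_k(w)-1|\le C|w|^{2-D}\le C'|z|^{2-D}$. A crude $L^1$ bound would again lose a factor $|z|^{D}$, so I would instead apply the same integration by parts. Write $F_k(w)-1$ as a factor in the integrand, with derivative controlled by the equation for $F_k^\prime$ itself, which is bounded by a constant times \cref{crudebound}-type quantities. This yields a bound $\mathcal{O}(|z|^{1-D}|z|^{2-D})$.

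If this bootstrapping is circular, I would first prove a crude bound $|F_k^\prime|\le C$ on $\Omega_k$ (from \cref{crudebound} and $|F_k|\le C$, giving $|F_k^\prime|\le C|z|$, then improve it iteratively) and feed it back in. Collecting the terms gives $F_k^\prime(z) = (-1)^{k+1}\lambda^{-1}V^\prime(z)^{-1}(1+\mathcal{O}(|z|^{2-D}))$.
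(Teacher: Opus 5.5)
Your proposal is correct and follows essentially the paper's route: differentiate \eqref{integralequation}, get the crude bound $|F_k'(z)|\le C|z|$ from \cref{crudebound}, integrate by parts via $\mathrm{e}^{-V}=-\frac{1}{V'}\frac{\mathrm{d}}{\mathrm{d}w}\mathrm{e}^{-V}$ and bootstrap to $|F_k'(z)|\le C|z|^{1-D}$, then handle the piece with $F_k$ replaced by $1$ by a second integration by parts and the piece with $F_k-1$ using \eqref{Fkbound} together with \cref{crudebound} (your arc/ray splitting is harmless but not needed). Two points need tightening: the bootstrap to the optimal bound is not a fallback but is genuinely required, because the $F_k'/V'$ term produced by integrating the $F_k-1$ piece by parts is $\mathcal{O}(|z|^{3-2D})$ only once $F_k'=\mathcal{O}(|z|^{1-D})$ is known; and in your parenthetical alternative the sign is wrong for $k$ even, where in fact $(\mathrm{e}^{-V}F_k')'=+\lambda^{-1}\mathrm{e}^{-V}F_k$.
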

\begin{proof}
Assume $k$ is even, since otherwise the analysis is the same except for reversing the sign of $V$. Our method is to begin with the bound on $F_k$ and progressively bootstrap to better estimates on $F_k^\prime$. Then differentiating \eqref{integralequation} we find
\begin{equation}\label{Fprimeformula}
F_k^\prime(z) = - \lambda^{-1} \int_z^{\infty_k} \mathrm{e}^{V(z) - V(w)} F_k(w) \, \mathrm{d}w \, .
\end{equation}
From \cref{crudebound} we then have $|F_k^\prime(z)| \leq C|z|$. Moving forward, let us then perform an integration by parts on \eqref{Fprimeformula} to find
\begin{align*}
F_k^\prime(z) &= -\lambda^{-1} \frac{1}{V^\prime(z)} F_k(z) - \lambda^{-1} \int_z^{\infty_k} \mathrm{e}^{V(z) - V(w)} F_k^\prime(w) \frac{1}{V^\prime(w)} \, \mathrm{d}w \\
&\quad - \lambda^{-1} \int_z^{\infty_k} \mathrm{e}^{V(z) - V(w)} F_k(w) \frac{\mathrm{d}}{\mathrm{d}w} \big( \frac{1}{V^\prime(w)} \big) \, \mathrm{d}w \, .
\end{align*}
Then applying our bound for $F^\prime_k$ to the right hand side, we see that $|F_k^\prime(z)| \leq C |z|^{-D+3} \leq C $ for some $C > 0$, since $D \geq 3$. Substituting our bound in again we find $|F_k^\prime(z)| \leq C |z|^{-D+2} \leq C|z|^{-1} $. Substituting in again, we arrive at the optimal scale which is $|F_k^\prime(z)| \leq C|z|^{-D+1} $.

Using this optimal bound, let us analyse each term. Clearly $$-\lambda^{-1} \frac{1}{V^\prime(z)} F_k(z) = -\lambda^{-1} \frac{1}{V^\prime(z)} \Big(1 + \mathcal{O}(|z|^{2-D})\Big)\, . $$ Similarly
$$- \lambda^{-1} \int_z^{\infty_k} \mathrm{e}^{V(z) - V(w)} F_k^\prime(w) \frac{1}{V^\prime(w)} \, \mathrm{d}w = \mathcal{O}(|z|^{3-2D}) \, . $$
This leaves
\begin{align*}
I &:=  \int_z^{\infty_k} \mathrm{e}^{V(z) - V(w)} F_k(w) \frac{\mathrm{d}}{\mathrm{d}w} \big( \frac{1}{V^\prime(w)} \big) \, \mathrm{d}w  \\
&= \underbrace{\int_z^{\infty_k} \mathrm{e}^{V(z) - V(w)}  \frac{\mathrm{d}}{\mathrm{d}w} \big( \frac{1}{V^\prime(w)} \big) \, \mathrm{d}w}_{=: I_1} + \underbrace{\int_z^{\infty_k} \mathrm{e}^{V(z) - V(w)} (F_k(w)-1) \frac{\mathrm{d}}{\mathrm{d}w} \big( \frac{1}{V^\prime(w)} \big) \, \mathrm{d}w}_{=: I_2} \, .
\end{align*}
For $I_1$ we perform yet another integration by parts.
\begin{align*}
I_1 &= - \int_z^{\infty_k} \frac{\mathrm{d}}{\mathrm{d}w} \big(\mathrm{e}^{V(z) - V(w)} \big) \frac{1}{V^\prime(w)}   \frac{\mathrm{d}}{\mathrm{d}w} \big( \frac{1}{V^\prime(w)} \big) \, \mathrm{d}w \\
&= \frac{1}{V^\prime(z)}   \frac{\mathrm{d}}{\mathrm{d}z} \big( \frac{1}{V^\prime(z)} \big) + \int_z^{\infty_k} \mathrm{e}^{V(z) - V(w)}  \frac{\mathrm{d}}{\mathrm{d}w} \big(  \frac{1}{V^\prime(w)}   \frac{\mathrm{d}}{\mathrm{d}w} \big( \frac{1}{V^\prime(w)} \big) \big) \, \mathrm{d}w \, .
\end{align*}
We do not need to calculate $\frac{1}{V^\prime(z)}   \frac{\mathrm{d}}{\mathrm{d}z} \big( \frac{1}{V^\prime(z)} \big)$ and $\frac{\mathrm{d}}{\mathrm{d}w} \big(  \frac{1}{V^\prime(w)}   \frac{\mathrm{d}}{\mathrm{d}w} \big( \frac{1}{V^\prime(w)} \big) \big) $ explicitly, except to observe that they are rational functions which scale like $\mathcal{O}(|z|^{1-2D})$ and $\mathcal{O}(|w|^{-2D})$ respectively. Hence $I_1 = \mathcal{O}(|z|^{1-2D})$. For $I_2$ we use that $F_k(w)  - 1 = \mathcal{O}(|w|^{2-D})$. This yields $I_2 = \mathcal{O}(|z|^{3-2D})$ which completes the proof.
\end{proof}
\begin{corollary}\label{liouvillegreen} For $z \in \Omega_k$ we have the asymptotics as $|z| \to +\infty$
\begin{align}
Y_k(z) = (1+ \mathcal{O}(|z|^{2-D})) \mathrm{i}^k \mathrm{e}^{-\mathrm{i}\pi/4}   \begin{cases} \mathrm{e}^{-\frac{1}{2}V(z)}   & k \text{ even } \\
 \mathrm{e}^{\frac{1}{2}V(z)} \frac{1}{V^\prime(z)}  & k \text{ odd } \, .
\end{cases}
\end{align}
\end{corollary}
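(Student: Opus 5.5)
The plan is to read the corollary off from the definition of $Y_k$ in \cref{sibuya}, treating the two parities of $k$ separately. In each case we insert the asymptotics already proved for $F_k$ and $F_k^\prime$ on $\Omega_k$. No new analytic estimate should be needed; the work was done in \eqref{Fkbound} and \cref{Fkprimebound}.

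\emph{The case $k$ even.} Here $Y_k(z) = \mathrm{i}^k \mathrm{e}^{-\mathrm{i}\pi/4}\mathrm{e}^{-\frac12 V(z)} F_k(z)$. By \eqref{Fkbound} we have $F_k(z) = 1 + \mathcal{O}(|z|^{2-D})$ uniformly as $|z|\to+\infty$ in $\Omega_k$. Substituting this gives the claimed formula immediately.

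\emph{The case $k$ odd.} Here $Y_k(z) = \mathrm{i}^k \mathrm{e}^{-\mathrm{i}\pi/4}\lambda\, \mathrm{e}^{\frac12 V(z)} F_k^\prime(z)$. By \cref{Fkprimebound},
\[
F_k^\prime(z) = (-1)^{k+1}\lambda^{-1}V^\prime(z)^{-1}\bigl(1+\mathcal{O}(|z|^{2-D})\bigr) ,
\]
and for $k$ odd the sign is $(-1)^{k+1} = +1$. The factor $\lambda$ in the definition of $Y_k$ cancels the $\lambda^{-1}$, which leaves
\[
\mathrm{i}^k \mathrm{e}^{-\mathrm{i}\pi/4}\mathrm{e}^{\frac12 V(z)} V^\prime(z)^{-1}\bigl(1+\mathcal{O}(|z|^{2-D})\bigr) ,
\]
as required.

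The only point needing care is bookkeeping. \Cref{Fkprimebound} was proved in detail for $k$ even, with the odd case obtained by replacing $V$ by $-V$. One should therefore check that this sign flip produces exactly the factor $(-1)^{k+1}$. For $k$ odd, \eqref{FkODE} reads $-V^\prime F^\prime - F^{\prime\prime} + \lambda^{-1}F = 0$, the even-case ODE with $V\mapsto -V$. The even-case conclusion $F^\prime = -\lambda^{-1}(V^\prime)^{-1}(1+\dots)$ then becomes $F^\prime = +\lambda^{-1}(V^\prime)^{-1}(1+\dots)$, consistent with the stated sign. One should also confirm that the normalising constants in \cref{sibuya} do not introduce an extra sign. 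Once these signs are matched, the error terms are inherited directly and the proof is complete.
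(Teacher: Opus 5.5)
Your proposal is correct and is exactly the argument the paper intends: the paper states this corollary without a separate proof, as an immediate consequence of substituting \eqref{Fkbound} and \cref{Fkprimebound} into the definition of $Y_k$. For odd $k$ the sign $(-1)^{k+1}=+1$ and the cancellation $\lambda\cdot\lambda^{-1}=1$ are, as you note, the only bookkeeping needed, and your check of the $V\mapsto -V$ sign is right.
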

Readers will notice that, up to a constant pre-factor, these asymptotics represent the Liouville--Green (WKB) asymptotics of $Y_k$ that we predicted earlier. We note that $\Omega_k \cap \Omega_{k+1} \neq \emptyset$ and $Y_k$ and $Y_{k+1}$ obey very different asymptotics on this overlap region, with one solution decaying and the other blowing up.
\begin{lemma} Define the Wronskian of two entire functions $f$ and $g$ as
\begin{align*}
W[f,g] \overset{\mathrm{def}}{=} \det \begin{pmatrix}
f & g \\
f^\prime & g^\prime
\end{pmatrix} \, .
\end{align*}
Then $W[Y_k, Y_j]$ is a constant for all $k,j \in [\![0, 2D-1]\!]$, and $W[Y_k, Y_{k+1}] \equiv 1$ (where the index $k$ is understood modulo $2D$).
\end{lemma}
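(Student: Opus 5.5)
The first claim is Abel's identity. Every $Y_j$ solves \eqref{schroedinger}, which has no first-derivative term, so for any two solutions
$$\frac{\mathrm{d}}{\mathrm{d}z} W[Y_k,Y_j] = Y_k Y_j^{\prime\prime} - Y_k^{\prime\prime}Y_j = U\,(Y_kY_j - Y_kY_j) = 0 .$$
Since the $Y_j$ are entire by \cref{entireness}, each $W[Y_k,Y_j]$ is constant on $\mathbb{C}$. It therefore suffices to evaluate $W[Y_k,Y_{k+1}]$ in any convenient limit. I will take $|z|\to+\infty$ inside $\Omega_k\cap\Omega_{k+1}$, for instance along the curve $\arg\varphi(z) = \pi(k+\tfrac12)/D$, which lies in both sectors.

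Rather than inserting the Liouville--Green asymptotics of \cref{liouvillegreen} directly, I plan to compute the Wronskian exactly in terms of the $F$'s. Write $c_j = \mathrm{i}^j\mathrm{e}^{-\mathrm{i}\pi/4}$, and let $e$ denote the even index and $o$ the odd index among $\{k,k+1\}$. Then
$$Y_e^\prime = c_e\,\mathrm{e}^{-V/2}\big(F_e^\prime - \tfrac12 V^\prime F_e\big).$$
For the odd solution, use \eqref{FkODE} with the minus sign, $F_o^{\prime\prime} = \lambda^{-1}F_o - V^\prime F_o^\prime$, to obtain
$$Y_o^\prime = c_o\,\mathrm{e}^{V/2}\big(F_o - \tfrac12\lambda V^\prime F_o^\prime\big).$$
Substituting into the determinant, the exponentials cancel and the $V^\prime F_eF_o^\prime$ cross terms cancel. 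This leaves the exact identity
$$W[Y_e,Y_o] = c_e c_o\big(F_eF_o - \lambda F_e^\prime F_o^\prime\big).$$

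Now let $z\to\infty$ in $\Omega_k\cap\Omega_{k+1}$. By \cref{FkODEprop} (equivalently \eqref{Fkbound}), $F_e,F_o\to 1$. By \cref{Fkprimebound}, $F_e^\prime F_o^\prime = \mathcal{O}(|z|^{2-2D})\to 0$. Hence $W[Y_e,Y_o] = c_ec_o = \mathrm{i}^{e+o}\mathrm{e}^{-\mathrm{i}\pi/2} = \mathrm{i}^{e+o-1}$.

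It remains to check the sign in each case.
\begin{itemize}
\item If $k$ is even, then $e+o = 2k+1$, so $W[Y_k,Y_{k+1}] = (-1)^k = 1$.
\item If $k$ is odd and $k+1\le 2D-1$, then $W[Y_k,Y_{k+1}] = -W[Y_{k+1},Y_k] = -(-1)^{k} = 1$.
\item The wrap-around case $k = 2D-1$, $k+1\equiv 0$, gives $e+o = 2D-1$. Then $W[Y_{2D-1},Y_0] = -\mathrm{i}^{2D-2} = -(-1)^{D-1} = 1$, because $D$ is even.
\end{itemize}
This last case is where the evenness of $D$ enters, matching the remark on cyclicity.

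The only delicate point is justifying the limit: $\Omega_k\cap\Omega_{k+1}$ must contain points going to infinity at which the estimates of both \eqref{Fkbound} and \cref{Fkprimebound} hold simultaneously. This holds because both estimates are uniform on the whole closed sectors $\Omega_k$ and $\Omega_{k+1}$, and the bisecting ray $\arg\varphi = \pi(k+\tfrac12)/D$ lies in both.
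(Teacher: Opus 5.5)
Your proof is correct and follows essentially the same route as the paper. You get constancy of the Wronskian from the absence of a first-derivative term in \eqref{schroedinger}. You then use the ODE for the odd-indexed $F$ to write $W[Y_e,Y_o]$ exactly as $c_ec_o\,(F_eF_o - \lambda F_e^\prime F_o^\prime)$, and let $|z|\to+\infty$ in $\Omega_k\cap\Omega_{k+1}$ using Proposition~\ref{FkODEprop} and Lemma~\ref{Fkprimebound}.

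Your explicit tracking of the phase $\mathrm{i}^{e+o-1}$ for odd $k$, and for the wrap-around pair $(2D-1,0)$ where evenness of $D$ is needed, fills in what the paper leaves as \enquote{a similar argument}.
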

\begin{proof}
$Y_k$ and $Y_{j}$ are both entire by \cref{entireness}, and hence so is $W[Y_k, Y_{j}]$. Since both $Y_k$ and $Y_{j}$ satisfy \eqref{schroedinger}, we have $\frac{\mathrm{d}}{\mathrm{d}z}W[Y_k, Y_{j}] = 0$. Hence $W[Y_k, Y_{j}]$ is a constant. Taking $j = k+1$, we may find this constant by computing the asymptotics of $W[Y_k, Y_{k+1}]$ in some direction in $\mathbb{C}$. Let us let $z \to \infty$ for $z \in \Omega_k \cap \Omega_{k+1}$. For simplicity, let us assume $k$ is even. Re-expressing $Y_k$, $Y_{k+1}$ in terms of $F_k $ and $F_{k+1}^\prime$, we find that
$$W[Y_k, Y_{k+1} ]  =  \lambda \det \begin{pmatrix}
F_k  & F_{k+1}^\prime \\ - \frac{1}{2}V^\prime F_k + F_k^\prime & \frac{1}{2}V^\prime F_{k+1}^\prime + F_{k+1}^{\prime \prime}
\end{pmatrix} \, . $$
Then using that $F_{k+1}$ satisfies \eqref{FkODE}, we find that
$$W[Y_k, Y_{k+1} ]  =  \lambda \det \begin{pmatrix}
F_k  & F_{k+1}^\prime \\ - \frac{1}{2}V^\prime F_k +  F_k^\prime & -\frac{1}{2}V^\prime F_{k+1}^\prime + \lambda^{-1} F_{k+1}
\end{pmatrix} = \lambda \det \begin{pmatrix}
F_k  & F_{k+1}^\prime \\  F_k^\prime &  \lambda^{-1} F_{k+1}
\end{pmatrix}  \, . $$
Then by \cref{FkODEprop} and \cref{Fkprimebound}, we see that $W[Y_k, Y_{k+1} ] = 1 + \mathcal{O}(|z|^{2 - 2D})$ for $z \in \Omega_k \cap \Omega_{k+1}$. $\Omega_k \cap \Omega_{k+1}$ is nonempty and unbounded, hence we may send $|z| \to +\infty$ and conclude that $W[Y_k, Y_{k+1} ] \equiv 1$. A similar argument holds for $k$ odd.
\end{proof}
\begin{corollary}\label{stokesrelation} There is a constant $\sigma_k \in \mathbb{C}$ such that $Y_{k+1} = \sigma_k Y_k - Y_{k-1}$.
\end{corollary}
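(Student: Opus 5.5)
The plan is to use the two-dimensionality of the solution space of \eqref{schroedinger} together with the Wronskian identities of the preceding lemma. Since $W[Y_{k-1},Y_k] \equiv 1 \neq 0$, the functions $Y_{k-1}$ and $Y_k$ are linearly independent solutions of the second order linear ODE \eqref{schroedinger}. They therefore form a basis of its solution space, which is two-dimensional by the existence and uniqueness theorem invoked in \cref{entireness}. Since $Y_{k+1}$ also solves \eqref{schroedinger} by \cref{sibuya}, there are constants $a_k,b_k \in \mathbb{C}$ with
\begin{align*}
Y_{k+1} = a_k Y_k + b_k Y_{k-1} \, .
\end{align*}

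The remaining task is to identify $b_k = -1$, and I would do this by taking Wronskians against $Y_k$. Using bilinearity and antisymmetry of $W$, and $W[Y_k,Y_k]=0$, we get
\begin{align*}
W[Y_k,Y_{k+1}] = a_k W[Y_k,Y_k] + b_k W[Y_k,Y_{k-1}] = -b_k W[Y_{k-1},Y_k] \, .
\end{align*}
By the lemma, both $W[Y_k,Y_{k+1}]$ and $W[Y_{k-1},Y_k]$ equal $1$, so $b_k=-1$. Setting $\sigma_k := a_k$ gives the claim.

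For an explicit formula, the same Wronskian trick against $Y_{k-1}$ yields
\begin{align*}
W[Y_{k-1},Y_{k+1}] = a_k W[Y_{k-1},Y_k] = \sigma_k \, .
\end{align*}
Thus $\sigma_k = W[Y_{k-1},Y_{k+1}]$, which is a constant by the lemma.

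The indices are read modulo $2D$, using the periodicity remark (valid since $D$ is even), so the cases $k=0$ and $k=2D-1$ are included. I expect no real obstacle here. The only point needing care is that linear independence is deduced from the nonvanishing Wronskian and not from the asymptotics.
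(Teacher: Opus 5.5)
Your proposal is correct and follows the paper's proof: linear independence of $Y_{k-1},Y_k$ from $W[Y_{k-1},Y_k]=1$, expansion of $Y_{k+1}$ in this basis, and a Wronskian against $Y_k$ to force the $Y_{k-1}$ coefficient to be $-1$. Your extra identity $\sigma_k = W[Y_{k-1},Y_{k+1}]$ is also correct; the paper uses the same formula later in \cref{conjugationsymmetry}.
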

\begin{proof}
From $W[Y_{k-1},Y_k] = 1$ it follows that $Y_{k-1}$ and $Y_k$ are linearly independent. Hence they span the space of solutions to \eqref{schroedinger}. Hence $Y_{k+1} = \sigma_k Y_k + \widetilde{\sigma}_k Y_{k-1}$ for some constants $\sigma_k, \widetilde{\sigma}_k \in \mathbb{C}$. Hence
$$-1 = W[Y_{k+1}, Y_k ] = W[\sigma_k Y_k + \widetilde{\sigma}_k Y_{k-1}, Y_k ] = \widetilde{\sigma}_k \, .$$
\end{proof}
\begin{remark} If $\sigma_k = 0$, then $Y_{k+1} = - Y_{k-1}$ rapidly decays both as $z \to \infty_{k+1}$ and $z \to \infty_{k-1}$. Hence $Y_{k+1}$ is an \enquote{eigenfunction} for $ - \frac{\mathrm{d}^2}{\mathrm{d}z^2} + U_0(z)$ on the Hilbert space associated to the directions $k-1$ and $k+1$. This indicates that $\sigma_k$ should be regarded as a spectral quantity.
\end{remark}
The relation in \cref{stokesrelation} may be represented as
\begin{align}\label{stokesrelation2}
\begin{pmatrix}
Y_{k+1} \\
Y_k
\end{pmatrix} = \underbrace{\begin{pmatrix}
\sigma_k & -1 \\ 1 & 0
\end{pmatrix}}_{S_k} \begin{pmatrix}
Y_k \\ Y_{k-1}
\end{pmatrix} \, .
\end{align}
\begin{lemma}\label{Mlemma} Define $M := S_D S_{D-1} \dots S_1$. Then $\det M = 1$ and $W[Y_0,Y_D] = M_{21} \neq 0$ for $\lambda > 0$.
\end{lemma}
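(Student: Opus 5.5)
Since each $S_k$ has determinant $\sigma_k\cdot 0 - (-1)\cdot 1 = 1$, the product $M$ has determinant $1$. For the Wronskian, iterate \eqref{stokesrelation2} to get
\[
\begin{pmatrix} Y_{D+1} \\ Y_D \end{pmatrix} = M \begin{pmatrix} Y_1 \\ Y_0 \end{pmatrix},
\qquad\text{so}\qquad Y_D = M_{21} Y_1 + M_{22} Y_0 .
\]
Bilinearity of the Wronskian together with $W[Y_0,Y_0]=0$ and $W[Y_0,Y_1]=1$ then gives $W[Y_0,Y_D] = M_{21}$. The content of the lemma is therefore the non-vanishing of $W[Y_0,Y_D]$.

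I would argue by contradiction, using the positivity furnished by the factorisation \eqref{factorisation}. Suppose $W[Y_0,Y_D]=0$. Then $Y_D = cY_0$ for some constant $c$, necessarily $c\neq 0$ because $Y_D\not\equiv 0$. Now $D$ is even, so $0$ and $D$ are both even indices and $\infty_0=+\infty$, $\infty_D=-\infty$. By \cref{liouvillegreen}, $Y_0$ decays like $\mathrm{e}^{-\frac12 V(x)}$ as $x\to+\infty$ along $\Omega_0\cap\mathbb{R}$, and $Y_D$ decays like $\mathrm{e}^{-\frac12V(x)}$ as $x\to-\infty$. Hence $y:=Y_0$ restricted to $\mathbb{R}$ decays like $\mathrm{e}^{-\frac12 V}$ at both ends. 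The same holds for $y'$: differentiating $y=\mathrm{i}^0\mathrm{e}^{-\mathrm{i}\pi/4}\mathrm{e}^{-V/2}F_0$ and using \cref{Fkprimebound} shows that $y'$ is $O(|x|^{D-1}\mathrm{e}^{-V/2})$, and likewise on the other side. So $y\in L^2(\mathbb{R})$ with $y'\in L^2(\mathbb{R})$, and $y$ is a nonzero solution of $y'' = (U_0+\lambda^{-1})y$ on $\mathbb{R}$.

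Multiply this equation by $\bar y$ and integrate over $\mathbb{R}$. The coefficients $U_0$ and $\lambda$ are real on $\mathbb{R}$, and the boundary terms vanish because of the decay just established. Setting $A=\frac{\mathrm{d}}{\mathrm{d}x}+\frac12 V'$, one can write this either directly or via \eqref{factorisation} in the form
\[
0=\int_{\mathbb{R}} \bar y\,\big(-y''+U_0y+\lambda^{-1}y\big)\,\mathrm{d}x=\int_{\mathbb{R}}|Ay|^2\,\mathrm{d}x+\lambda^{-1}\int_{\mathbb{R}}|y|^2\,\mathrm{d}x .
\]
Since $\lambda>0$, both terms are nonnegative, so $y\equiv 0$ on $\mathbb{R}$. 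By analyticity $y\equiv0$ everywhere, which contradicts $W[Y_0,Y_1]=1$. Therefore $M_{21}\neq0$.

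The main obstacle is justifying the integration by parts. This needs the asymptotics of $Y_0$ and $Y_0'$ to hold along the real axis toward both $\pm\infty$. That requires checking that the real half-lines eventually lie in $\Omega_0$ and $\Omega_D$ respectively; this holds since $\varphi$ is asymptotically the identity and $\arg\varphi(x)\to0$ or $\pi$. It also requires the derivative bound from \cref{Fkprimebound}. Once these are in place, the boundary terms $\bar y y'$ vanish at $\pm\infty$ and the positivity argument goes through.
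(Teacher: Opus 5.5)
Your proposal is correct and follows essentially the paper's route. You use $\det S_k=1$, then iterate the Stokes relation to get $Y_D=M_{21}Y_1+M_{22}Y_0$ and hence $W[Y_0,Y_D]=M_{21}$, and then rule out $M_{21}=0$ because $Y_0\propto Y_D$ would decay at both $\pm\infty$ and contradict the positivity from \eqref{factorisation} with $\lambda>0$. The differences are cosmetic: you assume $W[Y_0,Y_D]=0$ rather than $M_{21}=0$ and work with $Y_0$ instead of $Y_D$. You also justify the vanishing boundary terms more explicitly (the decay of $Y_0'$ via \cref{Fkprimebound}, and the real half-lines lying in $\Omega_0$ and $\Omega_D$), which the paper leaves implicit.
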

\begin{proof}
$\det M = 1$ is trivial, since $\det S_k = 1$.
Next, by repeatedly applying the relation \eqref{stokesrelation2} we have
\begin{align}\label{Mrelation}
\begin{pmatrix}
Y_{D+1} \\
Y_D
\end{pmatrix} = \begin{pmatrix}
M_{11} & M_{12} \\ M_{21} & M_{22}
\end{pmatrix} \begin{pmatrix}
Y_1 \\ Y_{0}
\end{pmatrix} \, .
\end{align}
Suppose by way of contradiction that $M_{21} = 0$. Then we would have $Y_{D}(z) = M_{22} Y_0(z)$. $1 = \det M = M_{11} M_{22}$, and so $M_{22} \neq 0$. Hence $Y_D$ decays rapidly to zero as $z \to \infty_0 = +\infty$ and as $z \to \infty_D = -\infty$. Let $U_0(z) = \frac{1}{4}V^\prime(z)^2 - \frac{1}{2}V^{\prime\prime}(z)$. Then by \eqref{schroedinger} we have
$$\int_{\mathbb{R}} \overline{Y}_D(z) (Y_D^{\prime \prime}(z) - U_0(z)Y_D(z)) \, \mathrm{d}z = \lambda^{-1}\int_{\mathbb{R}} |Y_D(z)|^2 \, \mathrm{d}z > 0 \, .$$
Note that this integral is finite because of the decay properties of $Y_D$ (\cref{liouvillegreen}). Next, we observe by the factorisation \eqref{factorisation} we may integrate by parts, hence
$$\int_{\mathbb{R}} \overline{Y}_D(z) (Y_D^{\prime \prime}(z) - U_0(z)Y_D(z)) \, \mathrm{d}z = -\int_{\mathbb{R}} \Big| Y_D^{ \prime}(z) + \frac{1}{2}V^\prime(z) Y_D(z)  \Big|^2 \, \mathrm{d}z \leq 0 $$
which is a contradiction. Finally, by \eqref{Mrelation}, we have $Y_D = M_{21} Y_1 + M_{22}Y_0$. Hence
$$W[Y_0, Y_D] = M_{21} \underbrace{W[Y_0, Y_1]}_{= 1} + M_{22}\underbrace{W[Y_0, Y_0]}_{ = 0 } = M_{21}\, .$$
\end{proof}
\begin{lemma}\label{conjugationsymmetry} Let $\overline{Y_k}(z) := \overline{Y_k(\overline{z})}$. We have the relations $\overline{\sigma_k} = \sigma_{-k}$ and $\overline{Y_k} = \mathrm{i} Y_{-k}$, where the index $k$ is understood modulo $2D$. In particular, $Y_0$ and $Y_D$ have argument $- \pi/4 + \pi \mathbb{Z}$, while $\sigma_0$ and $\sigma_D$ are purely real.
\end{lemma}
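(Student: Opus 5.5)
The plan is to obtain everything from the uniqueness statement in \cref{FkODEprop}, combined with the fact that $V$ has real coefficients. I would first show $\overline{F_k} = F_{-k}$, where $\overline{F_k}(z) := \overline{F_k(\overline{z})}$ and indices are taken modulo $2D$. The identities for $Y_k$ and $\sigma_k$ then follow by bookkeeping. Throughout I use that $2D$ is a multiple of $4$, so both $F_k$ and the prefactor $\mathrm{i}^k$ are consistently periodic in $k$ modulo $2D$.

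The first step is geometric: show $\overline{\Omega_k} = \Omega_{-k}$. Since $V$ is real, $\gamma^{-1}V(z) = z^D(1 + \mathcal{O}(z^{-1}))$ with real coefficients. The branch of $\varphi$ fixed by $\varphi(z) = z(1 + \mathcal{O}(z^{-1}))$ is given by a convergent series in $z^{-1}$ with real coefficients, so $\varphi(\overline{z}) = \overline{\varphi(z)}$ on $\mathcal{U}_\varrho$, and the same holds for $\varphi^{-1}$. Complex conjugation maps the sector $\{|w| \geq \varrho,\ \pi\frac{k-1}{D} \leq \arg w \leq \pi\frac{k+1}{D}\}$ onto the sector with indices $-k \pm 1$. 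Hence $\overline{\Omega_k} = \Omega_{-k}$ (with the index read modulo $2D$, as the sectors themselves are).

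The second step is the functional identity. The function $\overline{F_k}$ is entire, and conjugating \eqref{FkODE} while using $\overline{V^\prime(\overline z)} = V^\prime(z)$ shows that it solves $(-1)^k V^\prime G^\prime - G^{\prime\prime} + \lambda^{-1}G = 0$. Since $(-1)^{-k} = (-1)^k$, this is exactly the equation \eqref{FkODE} for index $-k$. Moreover $\overline{F_k}(z) = 1 + \mathcal{O}(|z|^{2-D})$ for $z \in \overline{\Omega_k} = \Omega_{-k}$. By the uniqueness in \cref{FkODEprop} (equivalently, uniqueness of the bounded solution of \eqref{integralequation}), $\overline{F_k} = F_{-k}$, and hence also $\overline{F_k^\prime} = F_{-k}^\prime$.

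Next I plug this into the definition in \cref{sibuya}; $k$ and $-k$ have the same parity, so the same case of the definition applies to both. Since $\mathrm{e}^{\pm\frac12 V}$ is real-symmetric, conjugation only acts on the prefactor:
\begin{align*}
\overline{\mathrm{i}^k \mathrm{e}^{-\mathrm{i}\pi/4}} = \mathrm{i}^{-k}\mathrm{e}^{\mathrm{i}\pi/4} = \mathrm{i}\cdot \mathrm{i}^{-k}\mathrm{e}^{-\mathrm{i}\pi/4} \, .
\end{align*}
This gives $\overline{Y_k} = \mathrm{i}\, Y_{-k}$.

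For the Stokes multipliers, conjugate the relation of \cref{stokesrelation}, $Y_{k+1} = \sigma_k Y_k - Y_{k-1}$. Applying $\overline{Y_j} = \mathrm{i} Y_{-j}$ and dividing by $\mathrm{i}$ gives
\begin{align*}
Y_{-k+1} = \overline{\sigma_k}\, Y_{-k} - Y_{-k-1} \, .
\end{align*}
\cref{stokesrelation} at index $-k$ also says $Y_{-k+1} = \sigma_{-k} Y_{-k} - Y_{-k-1}$. Subtracting, $(\overline{\sigma_k} - \sigma_{-k})\,Y_{-k} = 0$, and $Y_{-k} \not\equiv 0$ because $W[Y_{-k}, Y_{-k+1}] = 1$. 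Hence $\overline{\sigma_k} = \sigma_{-k}$.

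Finally, take $k = 0$ and $k = D$, for which $-k \equiv k \pmod{2D}$. Then $\sigma_0, \sigma_D \in \mathbb{R}$. Also $\overline{Y_k(x)} = \mathrm{i}\,Y_k(x)$ for real $x$, which forces $Y_k(x) \in \mathrm{e}^{-\mathrm{i}\pi/4}\mathbb{R}$, i.e. argument in $-\pi/4 + \pi\mathbb{Z}$ wherever $Y_k(x) \neq 0$.

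The only delicate point I expect is the first step: checking carefully that the normalised branch of $\varphi$, and hence the regions $\Omega_k$, commute with conjugation. This needs the branch to be defined by the real-coefficient Laurent-type expansion at infinity, and the index of $\Omega_{-k}$ to be read modulo $2D$. Everything after that is formal.
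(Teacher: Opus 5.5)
Your proof is correct and follows essentially the same route as the paper: uniqueness in \cref{FkODEprop} together with the reality of $V$ gives $\overline{F_k}=F_{-k}$, which is then inserted into the definition in \cref{sibuya}. The differences are cosmetic. The paper gets $\overline{\sigma_k}=\sigma_{-k}$ by conjugating the identity $\sigma_k = W[Y_{k-1},Y_{k+1}]$, rather than by comparing the conjugated three-term relation with the one at index $-k$. It also leaves implicit the check $\overline{\Omega_k}=\Omega_{-k}$ and the phase computation for real $x$, both of which you spell out.
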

\begin{proof}
$F_k$ is uniquely defined by the fact that it solves the ODE \eqref{FkODE} and has the asymptotics $F_k(z) = 1 + \mathcal{O}(|z|^{2-D})$. Since the polynomial $V$ has only real coefficients, and $(-1)^k = (-1)^{-k}$, we have that $\overline{F_k}(z) = F_{-k}(z)$. Inserting this into \cref{sibuya} we see that $\overline{Y_k}(z) = \mathrm{i} Y_{-k}(z)$. Finally, we observe that $\sigma_k = W[Y_{k-1},Y_{k+1}]$. Hence $\overline{\sigma_k} = -W[Y_{-k+1},Y_{-k-1}] = W[Y_{-k-1},Y_{-k+1}] = \sigma_{-k}$.
\end{proof}

\subsection{The inhomogeneous problem}

This concludes the discussion of the homogeneous problem. Let us now return to the problem with the source term, which is what interested us to begin with. In particular, let us consider the second order ODE equation for $\Phi$
\begin{align}\label{schroedingersource}
&\Phi^{\prime\prime}(z) = U(z) \Phi(z) + \widetilde{\Psi}(z)  \; \text{where } \;
U(z) = \frac{1}{4}V^\prime(z)^2 - \frac{1}{2}V^{\prime\prime}(z) + \lambda^{-1} \, .
\end{align}
The question which interests us is this: given $\widetilde{\Psi}(z) = Q(z) \mathrm{e}^{-\frac{1}{2}V(z)}$ for a polynomial $Q$, when does there exist a solution to \eqref{schroedingersource} of the form $\Phi(z) = P(z) \mathrm{e}^{-\frac{1}{2}V(z)}$ for some polynomial $P$? We now proceed to answer this question.

To begin, we observe that we can explicitly write the general solution.
\begin{lemma}
Let $k \in [\![0, 2D-1 ]\!]$. Any solution of \eqref{schroedingersource} admits the representation
\begin{align}\label{generalsolution}
\Phi(z) = \int_0^z \det \begin{pmatrix}
Y_{k+1}(z) & Y_{k+1}(w) \\ Y_{k}(z) & Y_{k}(w)
\end{pmatrix} \, \widetilde{\Psi}(w) \, \mathrm{d}w + A_k Y_{k+1}(z) + B_k Y_k(z)
\end{align}
for some $A_k, B_k \in \mathbb{C}$.
\end{lemma}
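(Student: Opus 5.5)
Since \eqref{schroedingersource} is a linear second order ODE whose solution space is an affine space modelled on the solutions of \eqref{schroedinger}, the plan is to prove the lemma in two steps. First I will show that the integral term in \eqref{generalsolution},
\begin{align*}
\Phi_p(z) := \int_0^z \big( Y_{k+1}(z) Y_k(w) - Y_k(z) Y_{k+1}(w) \big) \widetilde{\Psi}(w) \, \mathrm{d}w \, ,
\end{align*}
is a particular solution of \eqref{schroedingersource}. Second, I will use the fact that $Y_k, Y_{k+1}$ form a fundamental system to absorb the rest. Here $Y_k, Y_{k+1}$ are entire by \cref{entireness}, and $\widetilde{\Psi}$ is entire (it is a polynomial times $\mathrm{e}^{-\frac12 V}$), so the integral is path independent and $\Phi_p$ is entire.

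For the first step I will simply differentiate under the integral sign (variation of parameters). The first derivative produces the boundary term $\det\begin{pmatrix} Y_{k+1}(z) & Y_{k+1}(z) \\ Y_k(z) & Y_k(z)\end{pmatrix}\widetilde{\Psi}(z)=0$, which leaves
\begin{align*}
\Phi_p'(z)=\int_0^z \big(Y_{k+1}'(z)Y_k(w)-Y_k'(z)Y_{k+1}(w)\big)\widetilde{\Psi}(w)\,\mathrm{d}w \, .
\end{align*}
Differentiating again gives the boundary term
\begin{align*}
\big(Y_{k+1}'(z)Y_k(z)-Y_k'(z)Y_{k+1}(z)\big)\widetilde{\Psi}(z) = -W[Y_k,Y_{k+1}]\,\widetilde{\Psi}(z) \, ,
\end{align*}
together with an integral in which the second derivatives appear. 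By \eqref{schroedinger} that integral equals $U(z)\Phi_p(z)$. By the Wronskian lemma, $W[Y_k,Y_{k+1}]\equiv1$, so $\Phi_p''=U\Phi_p-\widetilde{\Psi}$.

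The sign therefore needs care. With the determinant ordered exactly as written, I obtain $-\widetilde{\Psi}$ rather than $+\widetilde{\Psi}$. I will recheck this computation carefully, since it is the only real point of potential difficulty. If the sign is indeed opposite, I will note that the statement holds after replacing the determinant by its negative (i.e.\ swapping the order of the rows, or using $W[Y_{k+1},Y_k]=-1$), or equivalently after adjusting the sign convention for $\widetilde{\Psi}$. Either way, the structure of the argument is unchanged.

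For the second step, let $\Phi$ be any solution. Then $\Phi-\Phi_p$ solves the homogeneous equation \eqref{schroedinger}. Since $W[Y_k,Y_{k+1}]=1\neq0$, the functions $Y_k$ and $Y_{k+1}$ are linearly independent and span the two-dimensional solution space, exactly as in the proof of \cref{stokesrelation}. Hence $\Phi-\Phi_p=A_kY_{k+1}+B_kY_k$ for some constants $A_k,B_k\in\mathbb{C}$, which gives \eqref{generalsolution}. If one wants the constants explicitly, they can be read off from Wronskians, e.g.\ $A_k=W[Y_k,\Phi-\Phi_p]$ up to sign. Apart from the sign check in the first step, the argument is routine.
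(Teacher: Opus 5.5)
Your plan is the paper's own argument: check by direct differentiation that the integral is a particular solution, then use that $Y_k,Y_{k+1}$ span the solutions of \eqref{schroedinger}. However, the one computation you single out contains a sign error, and as written it leads you to conclude that the lemma is false.

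With the paper's convention $W[f,g]=\det\begin{pmatrix} f & g \\ f' & g'\end{pmatrix}=fg'-f'g$, the boundary term from the second differentiation is
\[
Y_{k+1}'(z)Y_k(z)-Y_k'(z)Y_{k+1}(z)=Y_k(z)Y_{k+1}'(z)-Y_k'(z)Y_{k+1}(z)=W[Y_k,Y_{k+1}]=1 ,
\]
not $-W[Y_k,Y_{k+1}]$. Hence $\Phi_p''=U\Phi_p+\widetilde{\Psi}$, which is exactly \eqref{schroedingersource}, so the determinant is correctly ordered as stated. As a quick check, take $U\equiv 0$ with the unit-Wronskian pair $1,z$: the kernel is $z-w$, and $\int_0^z (z-w)\widetilde{\Psi}(w)\,\mathrm{d}w$ has second derivative $+\widetilde{\Psi}$.

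Your proposed remedy is therefore not harmless; it would actually break the statement. With the negated kernel, the integral term $-\Phi_p$ solves $\Phi''=U\Phi-\widetilde{\Psi}$. For a solution $\Phi$ of \eqref{schroedingersource}, the remainder $\Phi-(-\Phi_p)=\Phi+\Phi_p$ then satisfies $(\Phi+\Phi_p)''=U(\Phi+\Phi_p)+2\widetilde{\Psi}$. It cannot be written as $A_kY_{k+1}+B_kY_k$ unless $\widetilde{\Psi}\equiv 0$. The sign of $\widetilde{\Psi}$ is also fixed by the equation, so it is not a convention you are free to adjust.

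Once the sign is corrected, the rest of your argument is correct and matches the paper's proof:
\begin{itemize}
\item $\Phi_p$ is entire and the integral is path independent;
\item $\Phi-\Phi_p$ solves the homogeneous equation;
\item $Y_k,Y_{k+1}$ are independent because $W[Y_k,Y_{k+1}]=1$, with indices read modulo $2D$ when $k=2D-1$;
\item in fact $A_k=W[Y_k,\Phi-\Phi_p]$ exactly, not just up to sign.
\end{itemize}
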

\begin{proof}
Direct calculation shows that
$$z \mapsto \int_0^z \det \begin{pmatrix}
Y_{k+1}(z) & Y_{k+1}(w) \\ Y_{k}(z) & Y_{k}(w)
\end{pmatrix} \, \widetilde{\Psi}(w) \, \mathrm{d}w$$
is a particular solution of \eqref{schroedingersource}. Since $Y_k$ and $Y_{k+1}$ span the basis of solutions of \eqref{schroedinger}, the general solution is of the form \eqref{generalsolution}.
\end{proof}
\begin{lemma}\label{kernelindependence} The kernel
$$(z,w) \mapsto \det \begin{pmatrix}
Y_{k+1}(z) & Y_{k+1}(w) \\ Y_{k}(z) & Y_{k}(w)
\end{pmatrix}$$
is independent of $k$. Furthermore, for all $k \in [\![0, 2D-1]\!]$,
\begin{align}\label{kernelrelation}
\det \begin{pmatrix}
Y_{k+1}(z) & Y_{k+1}(w) \\ Y_{k}(z) & Y_{k}(w)
\end{pmatrix} = \frac{1}{W[Y_0,Y_D]} \det \begin{pmatrix}
Y_{D}(z) & Y_{D}(w) \\ Y_{0}(z) & Y_{0}(w)
\end{pmatrix} \, .
\end{align}
\end{lemma}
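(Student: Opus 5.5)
The plan is to use the Stokes relation \eqref{stokesrelation2} to show that the kernel is invariant under $k \mapsto k+1$. Then \eqref{kernelrelation} follows by expressing $Y_D$ in terms of $Y_1, Y_0$ via the matrix $M$ of \cref{Mlemma}.

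Write $\mathbf{Y}_k(z) = (Y_{k+1}(z), Y_k(z))^{\mathsf{T}}$, so that the kernel is $K_k(z,w) = \det\big(\mathbf{Y}_k(z) \,|\, \mathbf{Y}_k(w)\big)$. By \eqref{stokesrelation2} we have $\mathbf{Y}_{k}(\cdot) = S_k \mathbf{Y}_{k-1}(\cdot)$ with the same constant matrix at both arguments $z$ and $w$. Hence
\[
\big(\mathbf{Y}_k(z) \,|\, \mathbf{Y}_k(w)\big) = S_k \big(\mathbf{Y}_{k-1}(z) \,|\, \mathbf{Y}_{k-1}(w)\big).
\]
Taking determinants and using $\det S_k = 1$ gives $K_k = K_{k-1}$. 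Iterating over $k$ (indices taken modulo $2D$, which is consistent since \cref{stokesrelation} holds for every $k$) shows that $K_k$ is independent of $k$. One could equivalently write $Y_{k+1}(z)Y_k(w) - Y_k(z)Y_{k+1}(w)$ and substitute $Y_{k+1} = \sigma_k Y_k - Y_{k-1}$: the $\sigma_k$ terms cancel and the result is $K_{k-1}$.

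For \eqref{kernelrelation}, apply the same argument to \eqref{Mrelation}, which gives
\[
\det\begin{pmatrix} Y_{D+1}(z) & Y_{D+1}(w) \\ Y_D(z) & Y_D(w)\end{pmatrix} = \det M \cdot K_0(z,w) = K_0(z,w).
\]
This alone does not yet produce $Y_D$ paired with $Y_0$, so we work directly instead. By \cref{Mlemma}, $Y_D = M_{21} Y_1 + M_{22} Y_0$. Then
\[
\det\begin{pmatrix} Y_D(z) & Y_D(w) \\ Y_0(z) & Y_0(w)\end{pmatrix} = M_{21} \det\begin{pmatrix} Y_1(z) & Y_1(w) \\ Y_0(z) & Y_0(w)\end{pmatrix} + M_{22}\cdot 0 = M_{21} K_0(z,w).
\]
Since $M_{21} = W[Y_0,Y_D] \neq 0$ by \cref{Mlemma}, we can divide to get $K_0 = W[Y_0,Y_D]^{-1}\det(\cdots)$. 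Combined with the $k$-independence, this yields \eqref{kernelrelation} for all $k$.

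There is no genuine obstacle here. The only points needing care are the consistency of the indexing modulo $2D$, which is guaranteed by the periodicity remark ($D$ even), and invoking the nonvanishing of $M_{21}$ from \cref{Mlemma} to justify the division.
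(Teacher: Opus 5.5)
Your proof is correct and follows essentially the same route as the paper: $k$-independence comes from $\det S_k = 1$, and you then write $Y_D$ as a combination of $Y_1$ and $Y_0$ whose $Y_1$-coefficient is $W[Y_0,Y_D]$. The paper computes that coefficient directly by taking Wronskians, whereas you read it off as $M_{21}$ from \cref{Mlemma}, which is the same fact; your explicit use of $M_{21}\neq 0$ to justify the division is a step the paper leaves implicit.
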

\begin{proof}
The first claim follows from the fact that $\det S_k = 1$. Hence, by taking $k = 0$, we may write
$$\det \begin{pmatrix}
Y_{k+1}(z) & Y_{k+1}(w) \\ Y_{k}(z) & Y_{k}(w)
\end{pmatrix} =  \det \begin{pmatrix}
Y_{1}(z) & Y_{1}(w) \\ Y_{0}(z) & Y_{0}(w)
\end{pmatrix} \, .$$
Next, since $Y_0$ and $Y_1$ form a basis of solutions of \eqref{schroedinger}, we must have $Y_D = a Y_1 + b Y_0$ for some constants $a$ and $b$. Taking Wronskians of both sides we see that $a = W[Y_0, Y_D]$, $b = -W[Y_1, Y_D]$. Hence
$$\begin{pmatrix}
Y_D \\ Y_0
\end{pmatrix} = \begin{pmatrix}
a & b \\ 0 & 1
\end{pmatrix} \begin{pmatrix}
Y_1 \\ Y_0
\end{pmatrix} \, .$$
This transformation matrix has determinant $a = W[Y_0,Y_D]$, which proves the second claim.
\end{proof}

Now, of course, if one fixes a solution $\Phi$ of \eqref{schroedingersource}, the choice of $k$ in the representation \eqref{generalsolution} is arbitrary. Hence the right hand side of \eqref{generalsolution} is in fact independent of $k$. Since by \cref{kernelindependence}, the kernel is independent of $k$, $A_k Y_{k+1}(z) + B_k Y_k(z)$ must also be independent of $k$. Let us write $\mathsf{u}_k(z) = (Y_{k+1}(z) , Y_k(z) )^\mathsf{T}$ and $\mathsf{v}_k = (A_k, B_k)^\mathsf{T}$. Hence for all $k$ we have
\begin{align*}
\mathsf{u}_k(z)^\mathsf{T} \mathsf{v}_k = \mathsf{u}_{k+1}(z)^\mathsf{T} \mathsf{v}_{k+1} & & \forall k \in [\![0,2D-1]\!] \, . 
\end{align*}
Then using \eqref{stokesrelation2}, we have $\mathsf{u}_k(z) = S_k \mathsf{u}_{k-1}(z)$. Hence $\mathsf{u}_k(z)^\mathsf{T} \mathsf{v}_k = \mathsf{u}_{k}(z)^\mathsf{T} S_{k+1}^\mathsf{T} \mathsf{v}_{k+1}$. Then since $Y_k$ and $Y_{k+1}$ are linearly independent, we have
\begin{align}\label{vkrelation}
\mathsf{v}_k = S_{k+1}^\mathsf{T} \mathsf{v}_{k+1} \, .
\end{align}
Thus if we can determine $\mathsf{v}_k$ for \textit{one} value of $k$, it is then determined for \textit{all} values of $k$. This is natural because \eqref{schroedingersource} is a second order ODE so we expect two constants of integration.

We now arrive at the central proposition of this article.
\begin{proposition}\label{polynomialcondition} Let $\widetilde{\Psi}(z) = Q(z) \mathrm{e}^{-\frac{1}{2}V(z)}$ where $Q$ is a polynomial. Then the following are equivalent.
\begin{enumerate}[label=(\arabic*)]
\item \eqref{schroedingersource} admits a solution of the form $\Phi(z) = P(z) \mathrm{e}^{-\frac{1}{2}V(z)}$ for some polynomial $P$.
\item For all $j \in [\![1, D-1 ]\!] \setminus \{D/2 \} $
\begin{align}\label{linearcondition}
\int_0^{\infty_{2j}} Y_{2j}(z) \widetilde{\Psi}(z) \, \mathrm{d}z - \beta_j \int_0^{\infty_0} Y_{0}(z) \widetilde{\Psi}(z) \, \mathrm{d}z - \kappa_j \int_0^{\infty_D} Y_{D}(z) \widetilde{\Psi}(z) \, \mathrm{d}z = 0 
\end{align}
where $\beta_j = -\frac{W[Y_D, Y_{2j}]}{W[Y_0,Y_D]}$ and $\kappa_j = \frac{W[Y_0, Y_{2j}]}{W[Y_0,Y_D]}$ are constants.
\end{enumerate}
\end{proposition}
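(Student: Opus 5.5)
The plan is to turn condition (1) into a growth condition at infinity and read that condition off sector by sector, using the Sibuya solutions. By \cref{kernelindependence}, any solution of \eqref{schroedingersource} can be written
\begin{align*}
\Phi(z) = \int_0^z \det \begin{pmatrix} Y_{1}(z) & Y_{1}(w) \\ Y_{0}(z) & Y_{0}(w) \end{pmatrix} \widetilde{\Psi}(w)\, \mathrm{d}w + h(z) ,
\end{align*}
where $h$ is a solution of the homogeneous equation \eqref{schroedinger}. Put $F = \Phi\, \mathrm{e}^{\frac{1}{2}V}$; it is entire. Condition (1) says exactly that $F$ is a polynomial. By Liouville's theorem this is equivalent to $F$ having polynomial growth at infinity. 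The sectors $\Omega_{2j}$, $j \in [\![0,D-1]\!]$, together cover a full neighbourhood of infinity, since in the $\varphi$-plane each covers an opening of $2\pi/D$ centred at $2\pi j/D$. So (1) is equivalent to: for every $j$, $|\Phi(z)| \leq C(1+|z|)^m |\mathrm{e}^{-\frac{1}{2}V(z)}|$ on $\Omega_{2j}$.

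\textbf{Analysis in one even sector.} Fix $k=2j$ and use the kernel written with the pair $(Y_{k+1},Y_k)$; by \cref{kernelindependence} it is the same kernel. Move the base point of the integral from $0$ to $\infty_{k}$:
\begin{align*}
\Phi(z) = Y_{k+1}(z)\int_{\infty_k}^z Y_k \widetilde{\Psi} \,\mathrm{d}w - Y_k(z) \int_{\infty_k}^z Y_{k+1}\widetilde{\Psi}\,\mathrm{d}w + \Big( W[Y_k,h] + \int_0^{\infty_k} Y_k\widetilde{\Psi}\,\mathrm{d}w \Big) Y_{k+1}(z) + c\, Y_k(z) .
\end{align*}
Here the coefficient of $Y_{k+1}$ in $h$ is $W[Y_k,h]$, because $W[Y_k,Y_{k+1}]=1$. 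For the second integral the base point at $\infty_k$ is only formal: that integral diverges there, so I take it from a fixed point instead; this only adds a multiple of $Y_k$, which is absorbed into $c$. Next I estimate the first two terms using \cref{liouvillegreen} and the same integration-by-parts arguments as in \cref{crudebound} and \cref{Fkprimebound}. We have $\int_{\infty_k}^z Y_k Q\, \mathrm{e}^{-V/2}\,\mathrm{d}w = \mathcal{O}(|z|^{m}|\mathrm{e}^{-V(z)}|)$, while $Y_{k+1}(z) = \mathcal{O}(|\mathrm{e}^{V(z)/2}|)$; and $\int^z Y_{k+1}\widetilde{\Psi}$ grows at most polynomially. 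Hence the first two terms are $\mathcal{O}(|z|^m |\mathrm{e}^{-V/2}|)$. The term $c\,Y_k$ is also harmless. The remaining term, $Y_{k+1}$, grows like $\mathrm{e}^{V/2}/V'$ on the relevant region. Therefore $F$ has polynomial growth in this sector if and only if
\begin{align*}
W[Y_{2j},h] = -\int_0^{\infty_{2j}} Y_{2j}\widetilde{\Psi}\,\mathrm{d}w =: -I_{2j}.
\end{align*}

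\textbf{The main obstacle.} The asymptotics of $Y_{k+1}$ are only guaranteed on $\Omega_{k+1}$, which covers only half of $\Omega_k$. On the other half I will redo the argument with the pair $(Y_k, Y_{k-1})$. Here $W[Y_k,Y_{k-1}]=-1$, so the criterion there is again a condition on $W[Y_k,h]$. I expect the two halves to give the same linear condition, $W[Y_k,h] = -I_k$, and I will check that the sign conventions match. I also need the estimates to be uniform up to the edges of the sectors; this is where I expect the technical work to lie.

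\textbf{Linear algebra.} Condition (1) is thus equivalent to the existence of a homogeneous solution $h$ with $W[Y_{2j},h] = -I_{2j}$ for all $j \in [\![0,D-1]\!]$. The forward direction is immediate: if $\Phi = P\,\mathrm{e}^{-V/2}$, then $F=P$ is polynomially bounded, so every such condition holds. By \cref{Mlemma}, $W[Y_0,Y_D]\neq 0$, so $Y_0$ and $Y_D$ form a basis of solutions. Hence the two equations for $j=0$ and $j=D/2$ determine $h$ uniquely. Now expand
\begin{align*}
Y_{2j} = \frac{W[Y_{2j},Y_D]\,Y_0 - W[Y_{2j},Y_0]\,Y_D}{W[Y_0,Y_D]} = \beta_j Y_0 + \kappa_j Y_D .
\end{align*}
This gives $W[Y_{2j},h] = \beta_j W[Y_0,h] + \kappa_j W[Y_D,h] = -(\beta_j I_0 + \kappa_j I_D)$. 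So the remaining $D-2$ conditions, for $j \in [\![1,D-1]\!]\setminus\{D/2\}$, are precisely \eqref{linearcondition}. Conversely, if \eqref{linearcondition} holds, choose $h$ from the $j=0$ and $j=D/2$ equations. Then every sector condition holds, $F$ is entire with polynomial growth, and so $F$ is a polynomial $P$ by Liouville's theorem.
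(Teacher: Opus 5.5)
Your proposal is correct, and for (2)$\Rightarrow$(1) it is essentially the paper's proof. The paper's explicit choice $h=\big(\mathcal{I}_{D/2}[\widetilde{\Psi}]\,Y_0-\mathcal{I}_{0}[\widetilde{\Psi}]\,Y_D\big)/W[Y_0,Y_D]$ is exactly the $h$ fixed by your $j=0$ and $j=D/2$ equations. Its constants $A_{2j}$ and $B_{2j-1}$ are your $W[Y_{2j},h]$ and $-W[Y_{2j},h]$. The paper gets $B_{2j-1}=-A_{2j}$ from \eqref{vkrelation}, while you get it from $W[Y_{2j},Y_{2j\pm1}]=\pm1$; the sign check you deferred does come out as you predicted. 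The half-sector covering followed by Liouville is also the same.

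Two points genuinely differ. First, for (1)$\Rightarrow$(2) the paper uses no sectorial estimates. Integrating by parts twice, it shows that the left side of \eqref{linearcondition} equals $W[\Phi,\,Y_{2j}-\beta_jY_0-\kappa_jY_D](0)$, which vanishes by \eqref{identity}. This needs only the decay of $W[\Phi,Y_{2j}]$ towards $\infty_{2j}$. Your forward direction instead uses the ``only if'' half of your sector criterion, so it is not estimate-free. It relies on the tail bounds, and on $\mathrm{e}^{\frac{1}{2}V}Y_{2j\pm1}$ being genuinely exponentially large in each half-sector (e.g.\ along $\arg\varphi(z)=2\pi j/D$, where $V>0$). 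In exchange, one criterion serves both directions. It also makes transparent that \eqref{linearcondition} is exactly the compatibility of $D$ sector conditions with the two-dimensional space of homogeneous solutions.

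Second, for the tail $\int_z^{\infty_{2j}}Y_{2j}\widetilde{\Psi}\,\mathrm{d}w$ the paper routes through \cref{polynomialdecomposition} and repeated integration by parts. Your direct bound along $\Gamma_{2j,z}$ uses monotone $\Re V$ (by \cref{progressivity}), arc length $\mathcal{O}(|z|)$, and a Laplace-type tail on the ray. It already gives $\mathcal{O}(|z|^{\deg Q+1}|\mathrm{e}^{-V(z)}|)$, which suffices after multiplying by $|\mathrm{e}^{V}/V^\prime|$, so the decomposition lemma is not needed. Finally, the uniformity up to the sector edges that you flag as the main technical work is already available. The error terms in \cref{liouvillegreen} and \cref{Fkprimebound} hold uniformly on the closed sectors $\Omega_k$.
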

\begin{proof}
$\underline{(1) \implies (2)}$ We suppose that $\widetilde{\Psi}(z) = \Phi^{\prime\prime}(z) - U(z) \Phi(z)$ where $\Phi(z)  = P(z) \mathrm{e}^{-\frac{1}{2}V(z)}$ for $P$ a polynomial, and let us substitute this into the left hand side of \eqref{linearcondition}. To this end, let us note the following important identity. Let $A$ and $B$ be two analytic functions decaying sufficiently fast at infinity so that the relevant integrals converge. Then if we integrate by parts twice
$$\int_0^{\infty_k} A(z) B^{\prime \prime}(z) \, \mathrm{d}z = \int_0^{\infty_k} A^{\prime \prime}(z) B(z) \, \mathrm{d}z + W[B,A](0) - \lim_{z \to \infty_k} W[B,A](z)  $$
assuming $\lim_{z \to \infty_k} W[B,A](z)$ exists. Let us observe that
\begin{align*}
W[\Phi, Y_{2j}](z) = (-1)^j \mathrm{e}^{-\mathrm{i}\pi/4}\mathrm{e}^{-V(z)} \det \begin{pmatrix}
 P(z) & F_{2j}(z) \\ P^\prime(z) & F_{2j}^\prime(z)
\end{pmatrix} \longrightarrow 0 & &\text{ as } z \to \infty_{2j}
\end{align*}
since the determinant grows at most polynomially. Applying this to our case, we find that the left hand side of \eqref{linearcondition} gives the Wronskian
\begin{align*}
W\Big[ \Phi  ,  Y_{2j} - \beta_j Y_0 - \kappa_j Y_D   \Big] (0) \, .
\end{align*}
We claim that
\begin{equation}\label{identity}
Y_{2j} - \beta_j Y_0 - \kappa_j Y_D \equiv 0
\end{equation}
which would prove our claim. To see this, observe that since $W[Y_0, Y_D] \neq 0$, $Y_0$ and $Y_D$ form a basis of solutions of \eqref{schroedinger}. Hence $Y_{2j} = \widetilde{\beta}_j Y_0 + \widetilde{\kappa}_j Y_D$ for some $\widetilde{\beta}_j, \widetilde{\kappa}_j \in \mathbb{C}$. Taking Wronskians of both sides we see that
\begin{align*}
W[Y_D, Y_{2j}] &= \widetilde{\beta}_j W[Y_D, Y_0]   \\
W[Y_0, Y_{2j}] &=\widetilde{\kappa}_j W[Y_0, Y_D]  \, .
\end{align*}
Hence $\widetilde{\beta}_j = \beta_j$ and $\widetilde{\kappa}_j = \kappa_j$, and so we deduce \eqref{identity}.

$\underline{(2) \implies (1)}$ To prove the reverse claim, we must find a solution of polynomial type. We introduce the following convenient notation
\begin{align*}
\mathcal{I}_j[\widetilde{\Psi}] := \int_0^{\infty_{2j}}Y_{2j}(w)  \widetilde{\Psi}(w)  \, \mathrm{d}w \, , & & j \in [\![0, D-1 ]\!] \, .
\end{align*}
We propose the following
$$\Phi(z) := \frac{1}{W[Y_0, Y_D]} \int_0^z \det \begin{pmatrix}
Y_{D}(z) & Y_{D}(w) \\ Y_{0}(z) & Y_{0}(w)
\end{pmatrix} \widetilde{\Psi}(w) \, \mathrm{d}w - \frac{\mathcal{I}_0[\widetilde{\Psi}]}{W[Y_0, Y_D]} Y_D(z) + \frac{\mathcal{I}_{D/2}[\widetilde{\Psi}]}{W[Y_0, Y_D]} Y_0(z) \, . $$
We claim that $\Phi(z) = P(z) \mathrm{e}^{-\frac{1}{2}V(z)}$ for some polynomial $P$.  Let $j \in [\![0, D-1]\!]$. From \eqref{kernelrelation}, and the fact that $Y_{2j+1}$ and $Y_{2j}$ form a basis of solutions to \eqref{schroedinger}, we have
\begin{equation}\label{sectorformula}
\begin{split}
&\Phi(z) =  \int_0^z \det \begin{pmatrix}
Y_{2j+1}(z) & Y_{2j+1}(w) \\ Y_{2j}(z) & Y_{2j}(w)
\end{pmatrix} \widetilde{\Psi}(w) \, \mathrm{d}w + A_{2j} Y_{2j+1}(z) + B_{2j} Y_{2j}(z) \\
&\text{i.e.} \; - \frac{\mathcal{I}_0[\widetilde{\Psi}]}{W[Y_0, Y_D]} Y_D(z) + \frac{\mathcal{I}_{D/2}[\widetilde{\Psi}]}{W[Y_0, Y_D]} Y_0(z) = A_{2j} Y_{2j+1}(z) + B_{2j} Y_{2j}(z)
\end{split}
\end{equation}
for some unique constants $A_{2j}, B_{2j} \in \mathbb{C}$. We claim that, given the relation \eqref{linearcondition}, we have $A_{2j} = - B_{2j-1} = - \mathcal{I}_{j}[\widetilde{\Psi}]$ for all $j \in [\![ 0, D-1 ]\!]$. To see this, first observe that
\begin{align*}
Y_D &= W[Y_{2j}, Y_D] \,  Y_{2j+1} - W[Y_{2j+1}, Y_D] \,  Y_{2j} \\
Y_0 &= W[Y_{2j}, Y_0] \,  Y_{2j+1} - W[Y_{2j+1}, Y_0] \,  Y_{2j} \, .
\end{align*}
Collecting terms, we see that
\begin{align}\label{A2jformula}
A_{2j} = - W[Y_{2j}, Y_D]\frac{\mathcal{I}_0[\widetilde{\Psi}]}{W[Y_0, Y_D]} + W[Y_{2j}, Y_0] \frac{\mathcal{I}_{D/2}[\widetilde{\Psi}]}{W[Y_0, Y_D]} = -\beta_{j} \mathcal{I}_0[\widetilde{\Psi}] - \kappa_{j}  \mathcal{I}_{D/2}[\widetilde{\Psi}] \, .
\end{align}
We observe that \eqref{linearcondition} is trivially true for $j = 0$ and $j = D/2$, hence we may add these cases \enquote{for free.} Applying \eqref{linearcondition} to the above equation, we see that $A_{2j} = - \mathcal{I}_{j}[\widetilde{\Psi}]$ for $j \in [\![ 0, D-1]\!]$. Finally, by \eqref{vkrelation}, we observe that $B_{2j-1} = - A_{2j}$.

We now claim this implies that $\Phi(z) = P(z) \mathrm{e}^{-\frac{1}{2}V(z)}$ for some polynomial. We argue this by Liouville's theorem. Namely, $\mathrm{e}^{\frac{1}{2}V(z)}\Phi(z) $ is clearly an entire function, since the Sibuya solutions $Y_j$ are entire, and we claim that
\begin{align}\label{polynomialbound}
|\mathrm{e}^{\frac{1}{2}V(z)}\Phi(z)| \leq C |z|^{\theta} & & \text{ for some constants } \, \theta , C> 0 \text{ and for } |z| \text{ sufficiently large.}
\end{align}
By Liouville's theorem, this would imply that $\mathrm{e}^{\frac{1}{2}V(z)}\Phi(z)$ is a polynomial. We argue this sector by sector. Clearly, there is a $\widetilde{\varrho}$ sufficiently large so that 
$$\{ z \in \mathbb{C} \, : \, |z| > \widetilde{\varrho} \} \subset \bigcup_{k=0}^{2D-1} (\Omega_{k} \cap \Omega_{k+1}) \, .$$
Hence we claim that for each $k \in [\![0, 2D-1]\!]$, the bound \eqref{polynomialbound} holds in sector $\Omega_{k} \cap \Omega_{k+1}$, and since there are only finitely many such sectors, we obtain \eqref{polynomialbound} in the whole plane, outside of a compact set. 

Let us suppose to begin with that $k = 2j$ is even. Let us re-write \eqref{sectorformula} as 
\begin{align*}
\mathrm{e}^{\frac{1}{2}V(z)}\Phi(z) &= \mathrm{e}^{\frac{1}{2}V(z)} Y_{2j+1}(z) \underbrace{\Big[ \mathcal{I}_j[\widetilde{\Psi}] + A_{2j} \Big]}_{=0} - \mathrm{e}^{\frac{1}{2}V(z)} Y_{2j}(z) \int_0^z Y_{2j+1}(w) \widetilde{\Psi}(w) \, \mathrm{d}w  + B_{2j} \,  \mathrm{e}^{\frac{1}{2}V(z)} Y_{2j}(z) \\
&\quad - \mathrm{e}^{\frac{1}{2}V(z)} Y_{2j+1}(z) \int_{z}^{\infty_{2j}} Y_{2j}(w) \widetilde{\Psi}(w) \, \mathrm{d}w \, .
\end{align*}
We show that each term on the right hand side has at most polynomial growth. Recall that $\widetilde{\Psi}(z) = Q(z) \mathrm{e}^{-\frac{1}{2}V(z)}$ for $Q$ a polynomial. By \cref{liouvillegreen},  $|\mathrm{e}^{\frac{1}{2}V(z)} Y_{2j}(z)| \leq C$. Likewise by \cref{liouvillegreen},
\begin{align*}
|Y_{2j+1}(w) \widetilde{\Psi}(w) |\leq C \frac{|Q(w)|}{(1+|w|)^{D-1}} \leq \widetilde{C} (1+|w|)^{\deg Q - D+1} \text{ for some constants } C, \widetilde{C} > 0 \, .
\end{align*}
Taking the straight line path we then have
$$\Big|\int_0^z Y_{2j+1}(w) \widetilde{\Psi}(w) \, \mathrm{d}w \Big| \leq \widetilde{C} |z| (1+|z|)^{\max\{0, \deg Q - D+1\} } \, .$$
This leaves only the final term. To this end, let us use the decomposition from \cref{polynomialdecomposition}.
\begin{align*}
\int_{z}^{\infty_{2j}} Y_{2j}(w) \widetilde{\Psi}(w) \, \mathrm{d}w = \underbrace{\int_{z}^{\infty_{2j}} Y_{2j}(w) \widetilde{\Psi}_0(w) \, \mathrm{d}w}_{\Circled{A}} + \underbrace{\int_{z}^{\infty_{2j}} Y_{2j}(w) \Big\{  \widetilde{\Psi}_1^{\prime\prime}(w) - U(w) \widetilde{\Psi}_1(w) \Big\}\, \mathrm{d}w }_{ \Circled{B}} \, ,
\end{align*}
where $\widetilde{\Psi}_i(w) = Q_i(w) \mathrm{e}^{-\frac{1}{2}V(w)}$ for $i \in \{ 0,1 \}$, for some polynomials $Q_0, Q_1$, and where $\deg Q_0 \leq D-1$.

With regard to $\Circled{A}$, let us integrate by parts 
\begin{align*}
\Circled{A} &= -\int_{z}^{\infty_{2j}} V^\prime(w)^{-1} F_{2j}(w) Q_0(w)  \frac{\mathrm{d}}{\mathrm{d}w} ( \mathrm{e}^{-V(w)} ) \, \mathrm{d}w \\
&=  V^\prime(z)^{-1} F_{2j}(z) Q_0(z)\mathrm{e}^{-V(z)} - \underbrace{\int_z^{\infty_{2j}} \frac{V^{\prime\prime}(w)}{V^\prime(w)^2} F_{2j}(w) Q_0(w) \mathrm{e}^{-V(w)} \, \mathrm{d}w}_{\Circled{1}} \\
&\quad + \underbrace{\int_z^{\infty_{2j}} \frac{1}{V^\prime(w)} F_{2j}^\prime(w) Q_0(w) \mathrm{e}^{-V(w)} \, \mathrm{d}w}_{\Circled{2}} + \underbrace{\int_z^{\infty_{2j}} \frac{1}{V^\prime(w)} F_{2j}(w) Q_0^\prime(w) \mathrm{e}^{-V(w)} \, \mathrm{d}w}_{\Circled{3}} \, .
\end{align*}
By \eqref{Fkbound}, $|V^\prime(z)^{-1} F_{2j}(z) Q_0(z)| \leq C$ for some constant $C$ for $z \in \Omega_{2j}$ and $|z|$ sufficiently large. Likewise, in integrals $\Circled{1}$ and $\Circled{3}$, $|\frac{V^{\prime\prime}(w)}{V^\prime(w)^2} Q_0(w)| \leq C|w|^{-1}$ and $|V^\prime(w)^{-1} Q_0^\prime(w) | \leq C|w|^{-1}$. Combining \eqref{Fkbound} and by \cref{crudebound},
$$| \, \Circled{1} \, | \, , \, | \, \Circled{3} \, | \leq C |\mathrm{e}^{-V(z)}| \, .$$
for some $C > 0$. Likewise by \cref{Fkprimebound} and \cref{crudebound}, $| \, \Circled{2} \, | \leq C |z|^{-D+2} |\mathrm{e}^{-V(z)}|$. Recall that we must multiply $\Circled{A}$ by $\mathrm{e}^{\frac{1}{2}V(z)} Y_{2j+1}(z)$ which by \cref{liouvillegreen} grows like $V^\prime(z)^{-1}\mathrm{e}^{V(z)}$. This establishes polynomial growth for these terms.

Finally, we consider $\Circled{B}$. By integrating by parts and then performing row operations on the Wronskian we may see
\begin{align*}
\Circled{B} = W[\widetilde{\Psi}_1, Y_{2j}](z) = (-1)^j \mathrm{e}^{-\mathrm{i}\pi/4} \mathrm{e}^{-V(z)} W[Q_1, F_{2j}](z) \, .
\end{align*}
By \eqref{Fkbound} and \cref{Fkprimebound}, we see that $W[Q_1, F_{2j}](z)$ has at most polynomial growth as $|z| \to +\infty$ for $z \in \Omega_{2j}$. Once again, $\Circled{B}$ must be multiplied by $\mathrm{e}^{\frac{1}{2}V(z)} Y_{2j+1}(z)$ which grows like $V^\prime(z)^{-1}\mathrm{e}^{V(z)}$. This completes the proof of polynomial growth for $k = 2j$ even.

For $k = 2j-1$ we may use the representation
\begin{align*}
\mathrm{e}^{\frac{1}{2}V(z)} \Phi(z) &= \mathrm{e}^{\frac{1}{2}V(z)} Y_{2j-1}(z) \underbrace{\Big[ B_{2j-1} - \mathcal{I}_{j}[\widetilde{\Psi}] \Big]}_{=0} + A_{2j-1} \mathrm{e}^{\frac{1}{2}V(z)} Y_{2j}(z)   \\
&\quad + \mathrm{e}^{\frac{1}{2}V(z)} Y_{2j-1}(z) \int_z^{\infty_{2j}}Y_{2j}(w) \widetilde{\Psi}(w) \, \mathrm{d}w  +  \mathrm{e}^{\frac{1}{2}V(z)} Y_{2j}(z) \int_0^z Y_{2j-1}(w) \widetilde{\Psi}(w) \, \mathrm{d}w \, .
\end{align*}
The proof then proceeds in the same way as in the even case.
\end{proof}
\begin{lemma}\label{polynomialdecomposition} Let $Q$ be a polynomial. Then there exist polynomials $Q_0, Q_1$, where $\deg Q_0 \leq D-1$, such that
\begin{align*}
Q = Q_0 + Q_1^{\prime\prime} - V^\prime Q_1^\prime - \lambda^{-1}Q_1 \, .
\end{align*}
Note that this implies that
\begin{align*}
Q(x) \mathrm{e}^{-\frac{1}{2}V(x)} = Q_0(x) \mathrm{e}^{-\frac{1}{2}V(x)} + \frac{\mathrm{d}^2}{\mathrm{d}x^2} \Big(Q_1(x) \mathrm{e}^{-\frac{1}{2}V(x)} \Big) - U(x) Q_1(x) \mathrm{e}^{-\frac{1}{2}V(x)}  \, .
\end{align*}
\end{lemma}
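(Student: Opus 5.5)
We argue by strong induction on $\deg Q$, using the linear operator $L(P) := P^{\prime\prime} - V^\prime P^\prime - \lambda^{-1}P$ acting on polynomials. The key observation is degree bookkeeping of the kind used in the injectivity lemma above: if $P$ is a nonconstant polynomial of degree $m\geq 1$ with leading coefficient $p$, then $V^\prime P^\prime$ has degree exactly $m+D-2$ with leading coefficient $D\gamma m p$. Since $D\geq 4$, this dominates both $P^{\prime\prime}$ (degree $m-2$) and $\lambda^{-1}P$ (degree $m < m+D-2$). Hence
\[
L(X^m) = -D\gamma m\, X^{m+D-2} + (\text{terms of degree} < m+D-2), \qquad m \geq 1 .
\]
Thus $L$ hits every degree $\geq D-1$ with a nonzero leading coefficient. (The constant $X^0$ is sent to $-\lambda^{-1}$, of degree $0$, which we will not need.)

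The induction runs as follows. If $\deg Q \leq D-1$, take $Q_0 = Q$ and $Q_1 = 0$. Otherwise let $n = \deg Q \geq D$ with leading coefficient $q$, and set $m = n-D+2 \geq 2 \geq 1$. Then
\[
\widehat{Q} := Q - L\Big(-\tfrac{q}{D\gamma m}X^m\Big)
\]
has degree strictly less than $n$. By induction, $\widehat Q = \widehat Q_0 + L(\widehat Q_1)$ with $\deg \widehat Q_0 \leq D-1$, and so
\[
Q = \widehat Q_0 + L\Big(\widehat Q_1 - \tfrac{q}{D\gamma m}X^m\Big),
\]
which is the required decomposition. Equivalently, $L(\mathscr{P}^{\mathbb C}_{k})+\mathscr{P}^{\mathbb C}_{D-1}=\mathscr{P}^{\mathbb C}_{k+D-2}$ for each $k$, by a triangularity argument. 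If $Q$ is real then so are $Q_0, Q_1$, since every coefficient introduced is real.

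For the second claim, put $g(x) = Q_1(x)\mathrm{e}^{-\frac12 V(x)}$ and compute directly:
\[
g^{\prime\prime} = \Big(Q_1^{\prime\prime} - V^\prime Q_1^\prime + \big(\tfrac14 V^{\prime 2} - \tfrac12 V^{\prime\prime}\big)Q_1\Big)\mathrm{e}^{-\frac12 V}.
\]
Therefore
\[
g^{\prime\prime} - Ug = \big(Q_1^{\prime\prime} - V^\prime Q_1^\prime - \lambda^{-1} Q_1\big)\mathrm{e}^{-\frac12 V} = L(Q_1)\,\mathrm{e}^{-\frac12 V}.
\]
This is the same conjugation used to pass from \eqref{Fequation} to \eqref{psiequation}. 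Multiplying the polynomial identity by $\mathrm{e}^{-\frac12 V}$ then gives the stated identity.

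The only point needing care is that the leading-coefficient map is never zero in the range we need, and that no target degree in $[\![D-1, \infty)$ is missed. Every $n\geq D$ corresponds to $m = n-D+2 \geq 2$, so $m\geq 1$ and the leading coefficient $-D\gamma m$ is nonzero. This is exactly where $D\geq 4$ (indeed $D>2$) is used; in the case $D=2$ the leading coefficient would instead be $-(2m\gamma+\lambda^{-1})$, and the bookkeeping would change.
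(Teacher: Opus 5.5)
Your proof is correct and is essentially the paper's argument. Both use induction on $\deg Q$ with base case $Q_0=Q$, $Q_1=0$, and both cancel the leading term by subtracting $L$ applied to $-\frac{\rho}{\gamma D(n-D+2)}X^{n-D+2}$; you additionally verify the conjugation identity that the paper leaves implicit. One trivial slip: your parenthetical reformulation $L(\mathscr{P}^{\mathbb C}_{k})+\mathscr{P}^{\mathbb C}_{D-1}=\mathscr{P}^{\mathbb C}_{k+D-2}$ holds only for $k\geq 1$, since for $k=0$ the left side is $\mathscr{P}^{\mathbb C}_{D-1}$, which is strictly larger than the right side.
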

\begin{proof}
We prove this by induction on $n = \deg Q$. If $n \leq D-1$, then the claim is trivial since we may take $Q_0 := Q$ and $Q_1 := 0$. This establishes the base case.

Next, by the inductive hypothesis, suppose that the decomposition holds for polynomials of degree up to $n-1$ and let $Q$ be a polynomial of degree exactly $n$, where without loss of generality $n > D-1$. Let $Q$ have leading coefficient $\rho$. If we let $$S(x) := - \frac{\rho}{\gamma D(n-D+2)} x^{n-D+2}$$ then $S^{\prime\prime} - V^\prime S^\prime - \lambda^{-1} S$ is a polynomial of degree $n$ with leading coefficient $\rho$. Hence $$Q - \Big(S^{\prime\prime} - V^\prime S^\prime - \lambda^{-1} S\Big)$$ is a polynomial of degree at most $n-1$. By the inductive hypothesis, there exist polynomials $\widetilde{Q}_0, \widetilde{Q}_1$, where $\deg \widetilde{Q}_0 \leq D-1$, such that
$$Q - \Big(S^{\prime\prime} - V^\prime S^\prime - \lambda^{-1} S\Big) = \widetilde{Q}_0 + \widetilde{Q}_1^{\prime\prime} - V^\prime \widetilde{Q}_1^\prime - \lambda^{-1}\widetilde{Q}_1  \, .$$
The proof is then complete upon taking $Q_0 := \widetilde{Q}_0$ and $Q_1 := \widetilde{Q}_1 + S$.
\end{proof}
\begin{remark}\leavevmode
\begin{enumerate}[label=(\arabic*)]
\item In \eqref{linearcondition} we have fixed the lower endpoint to $0$, but we could have replaced it with any other point $z_0 \in \mathbb{C}$ (and indeed, this lower endpoint could depend on $j$). More precisely, 
$$z_0 \mapsto \int_{z_0}^{\infty_{2j}} Y_{2j}(z) \widetilde{\Psi}(z) \, \mathrm{d}z - \beta_j \int_{z_0}^{\infty_0} Y_{0}(z) \widetilde{\Psi}(z) \, \mathrm{d}z - \kappa_j \int_{z_0}^{\infty_D} Y_{D}(z) \widetilde{\Psi}(z) \, \mathrm{d}z$$
is independent of $z_0$ by virtue of \eqref{identity}.
\item Let us also remark that it is impossible for $\beta_j$ and $\kappa_j$ to vanish simultaneously, since if $\beta_j = \kappa_j =0$, then $W[Y_0, Y_{2j}] = W[Y_D, Y_{2j} ] =0$. However this would imply that $Y_{2j}$ is proportional to both $Y_0$ and $Y_D$. This would imply that $Y_0$ and $Y_D$ are proportional to each other, which we have already established cannot be the case for $\lambda > 0$ (see \cref{Mlemma}).
\item Using similar kind of reasoning as in \cref{Mlemma}, one can write $\beta_j$ and $\kappa_j$ in terms of the Stokes multipliers $\{ \sigma_k \}_{k \in [\![ 0, 2D-1]\!]}$. More precisely, $W[Y_0, Y_{2j}] = (S_{2j} S_{2j-1} \dots S_1)_{21}$ and $W[Y_D, Y_{2j}] = (S_{2j} S_{2j-1} \dots S_{D+1})_{21}$ (where by \eqref{cyclicity} we understand all indices modulo $2D$).
\end{enumerate}
\end{remark}
\begin{lemma}[Zeros of the dual polynomials] Assume $n \geq 1$. Then $\Psi_n$ has at least $n$ real zeroes (not counting multiplicity), of which at least $n - D/2 + 1$ must be simple.
\end{lemma}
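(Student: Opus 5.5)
The plan is to run the classical sign-change argument for orthogonal polynomials, using the orthogonality relations \eqref{Psiorthog} together with one extra piece of structure coming from the ODE \eqref{PsiP}. I will count sign changes first, and then handle the zeros that are not sign changes separately.

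Let $x_1 < \dots < x_m$ be the real points where $\Psi_n$ changes sign, i.e.\ its real zeros of odd multiplicity. Suppose $m \leq n-1$. Then $q(x) = \prod_{i=1}^m (x-x_i)$ has degree at most $n-1$, and $q\Psi_n$ has constant sign on $\mathbb{R}$ and is not identically zero. Hence $\int_{\mathbb{R}} q\Psi_n \mathrm{e}^{-V}\,\mathrm{d}x \neq 0$, which contradicts \eqref{Psiorthog}. (The case $n=1$ is covered: $m=0$ gives $q\equiv1$, and $\int\Psi_1 \mathrm{e}^{-V}=0$ forces a sign change.) So $\Psi_n$ has at least $n$ real zeros of odd multiplicity, in particular at least $n$ distinct real zeros.

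Next I bound how many of these can be non-simple. Write $\Psi_n$ on $\mathbb{R}$ as a real polynomial of degree $n+D-2$; the coefficients are real because $P_n$ is real. Let $r$ be the number of distinct real zeros with multiplicity at least $2$. Among the sign-changing zeros, each non-simple one has multiplicity at least $3$. Counting degree,
\[
n + D - 2 \;\geq\; (\#\text{simple sign changes}) + 3\,(\#\text{non-simple sign changes}).
\]
With at least $n$ sign changes of which $s$ are simple, this gives $n+D-2 \geq s + 3(n-s)$, i.e.\ $2s \geq 2n - D + 2$, so $s \geq n - D/2 + 1$.

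This already gives the statement. The only subtle point is to state carefully that non-simple odd-multiplicity zeros use up at least three degrees each. I don't expect anything harder than that, since real zeros of even multiplicity only lower the count of sign changes and never hurt the inequality.
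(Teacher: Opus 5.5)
Your proposal is correct and follows essentially the same route as the paper: the sign-change argument against the orthogonality relations \eqref{Psiorthog} gives at least $n$ real zeros of odd multiplicity, and a degree count against $\deg \Psi_n = n+D-2$ limits how many of them can be non-simple. The only difference is bookkeeping. The paper shows that all non-simple real zeros (even-multiplicity ones included, each costing at least $2$) number at most $D/2-1$, whereas you bound only the non-simple sign changes (each costing at least $3$); this gives the same conclusion, and the announced ``extra structure from the ODE'' and the quantity $r$ are never actually needed.
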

\begin{proof}
Let $x_1, \dots, x_p \in \mathbb{R}$ be the points on the real line where $\Psi_n$ changes sign, i.e. the real zeroes of odd multiplicity. Hence $(x-x_1) \dots (x-x_p)\Psi_n(x)$ has the same sign throughout $\mathbb{R}$. Hence
$$\int_{\mathbb{R}}(x-x_1) \dots (x-x_p)\Psi_n(x) \mathrm{e}^{-V(x)}\, \mathrm{d}x \neq 0 \, .$$
However if $p \leq n-1$ then by \eqref{Psiorthog} the above integral is $0$. Hence we must have $p \geq n$. Finally, it is easily seen that if $k$ is the number of non-simple real zeros, then the total number of real zeros, counting multiplicity, must be at least $2k + p$. However the total number of real zeros, again counting multiplicity, cannot exceed $\deg \Psi_n = n + D-2$. Hence $2k + n \leq 2k+p \leq n+D-2$. Hence $k \leq D/2 - 1$.
\end{proof}

\subsection{The multiple-orthogonality characterisation}

\begin{lemma}\label{linearindep} Define the linear functionals
\begin{align*}
\Xi_k : \mathscr{P}_{n +D-2}^{\mathbb{C}} \longrightarrow \mathbb{C} \, , & &k \in [\![ 0, n ]\!] \\ 
\Theta_j : \mathscr{P}_{n +D-2}^{\mathbb{C}} \longrightarrow \mathbb{C} \, , & & j \in [\![ 1, D-1 ]\!] \setminus \{ D/2 \}
\end{align*}
by
\begin{align*}
\Xi_k [Q] &= \int_{\mathbb{R}} Q(x) x^k \mathrm{e}^{-V(x)}\, \mathrm{d}x \\
\Theta_j [Q] &= -\int_0^{\infty_{2j}} Y_{2j}(z) \mathrm{e}^{-\frac{1}{2}V(z)} Q(z) \, \frac{\mathrm{d}z}{2\pi \mathrm{i}} + \beta_j \int_0^{\infty_0} Y_{0}(z) \mathrm{e}^{-\frac{1}{2}V(z)} Q(z) \, \frac{\mathrm{d}z}{2\pi \mathrm{i}} \\
&\quad + \kappa_j \int_0^{\infty_D} Y_{D}(z) \mathrm{e}^{-\frac{1}{2}V(z)} Q(z) \, \frac{\mathrm{d}z}{2\pi \mathrm{i}} \, .
\end{align*}
Then $\{ \Xi_k \}_{k \in [\![0,n]\!]} \cup \{ \Theta_j \}_{j \in [\![1, D-1]\!]\setminus\{D/2\}}$
is a basis of the dual space $(\mathscr{P}_{n+D-2}^{\mathbb{C}})^\ast$.\end{lemma}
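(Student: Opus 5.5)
Counting first: there are $n+1$ functionals $\Xi_k$ and $D-2$ functionals $\Theta_j$, so $n+D-1 = \dim \mathscr{P}_{n+D-2}^{\mathbb{C}}$ in total. It therefore suffices to prove linear independence, or equivalently that the only $Q \in \mathscr{P}^{\mathbb{C}}_{n+D-2}$ annihilated by all of them is $Q=0$. I will take the second route and use the characterisation of the image of $P \mapsto P - \lambda P'' + \lambda V'P'$ given by \cref{polynomialcondition}.

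Suppose $\Xi_k[Q]=0$ for $k\in[\![0,n]\!]$ and $\Theta_j[Q]=0$ for all admissible $j$. Set $\widetilde\Psi(z) = Q(z)\mathrm{e}^{-\frac12 V(z)}$. Up to the factor $-\frac{1}{2\pi\mathrm{i}}$, $\Theta_j[Q]=0$ is exactly condition \eqref{linearcondition}. By \cref{polynomialcondition}, (2)$\Rightarrow$(1), there is a polynomial $P$ with $\Phi = P\mathrm{e}^{-\frac12V}$ solving \eqref{schroedingersource}. Unwinding the substitution $\psi = F\mathrm{e}^{-\frac12 V}$, this means
\[
P'' - V'P' - \lambda^{-1}P = Q .
\]
Comparing degrees (for $D\ge4$ a nonconstant $P$ gives degree $\deg P + D-2$, and a constant $P$ gives degree $0$), we get $P \in \mathscr{P}^{\mathbb{C}}_{n}$. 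This is the same degree argument as in the injectivity lemma. Here I also need to note that the polynomial produced in the proof of \cref{polynomialcondition} is complex, which is harmless.

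Now integrate by parts on $\mathbb{R}$, exactly as in the derivation of \eqref{Psiorthog}. For every polynomial $R$,
\[
\int_{\mathbb{R}} R\,Q\,\mathrm{e}^{-V}\,\mathrm{d}x = -\lambda^{-1}\Big(\int_{\mathbb{R}} R P\mathrm{e}^{-V}\,\mathrm{d}x + \lambda\int_{\mathbb{R}} R'P'\mathrm{e}^{-V}\,\mathrm{d}x\Big),
\]
which equals $-\lambda^{-1}\langle R,P\rangle_S$ extended bilinearly to complex $P$. The conditions $\Xi_k[Q]=0$ for $k\le n$ therefore say $\langle X^k,P\rangle_S=0$ for all $k\in[\![0,n]\!]$. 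Since $P\in\mathscr{P}_n^{\mathbb{C}}$, splitting $P=\Re P+\mathrm{i}\Im P$ and using that $\langle\cdot,\cdot\rangle_S$ is real and positive definite gives $\langle \Re P,\Re P\rangle_S = \langle\Im P,\Im P\rangle_S = 0$. Hence $P=0$, and so $Q=0$. This proves independence, and the counting argument then gives a basis.

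The main obstacle is the step invoking \cref{polynomialcondition}: I must check that its hypotheses allow a complex polynomial $Q$ of arbitrary degree. The proof there uses only polynomial growth and \cref{polynomialdecomposition}, both of which carry over verbatim to complex coefficients. Beyond that, the remaining work is a careful check of the sign and normalisation conventions relating \eqref{schroedingersource} to the operator $P\mapsto P''-V'P'-\lambda^{-1}P$, and a check that the boundary terms in the integration by parts vanish thanks to the decay of $\mathrm{e}^{-V}$.
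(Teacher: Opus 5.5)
Your proposal is correct and follows essentially the same route as the paper: reduce to trivial joint kernel by counting, use \cref{polynomialcondition} to write $Q$ in the image of $P \mapsto P'' - V'P' - \lambda^{-1}P$ with $P \in \mathscr{P}_n^{\mathbb{C}}$, integrate by parts against $x^k$, and conclude by positive definiteness of $\langle\cdot,\cdot\rangle_S$. The only differences are cosmetic: you check the degree bound $\deg P \le n$ explicitly, which the paper simply asserts, and you split $P$ into real and imaginary parts where the paper tests against $\overline{P}$.
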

\begin{proof}
Since we have $n+D-1$ linear functionals in total, by the rank-nullity theorem it is sufficient to show that
$$\bigcap_{k=0}^n \ker \Xi_k \cap \bigcap_{\substack{j=1 \\ j \neq D/2}}^{D-1} \ker \Theta_j = 0 \, .$$
Thus let $Q \in \ker \Xi_k$ for all $k \in [\![ 0, n ]\!]$ and $Q \in \ker \Theta_j$ for all $ j \in [\![ 1, D-1]\!] \setminus \{ D/2 \}$. Hence by \cref{polynomialcondition}, there exists a polynomial $P \in \mathscr{P}_n^{\mathbb{C}}$ such that
$$Q = \lambda V^\prime P^\prime - \lambda P^{\prime\prime} + P \, .$$
If we now use that $\Xi_k [Q] = 0$, and integrate by parts, we find that
\begin{align*}
\int_{\mathbb{R}} x^k P(x) \mathrm{e}^{-V(x)}\, \mathrm{d}x + \lambda \int_{\mathbb{R}} \frac{\mathrm{d}}{\mathrm{d}x} (x^k) P^\prime(x) \mathrm{e}^{-V(x)}\, \mathrm{d}x  = 0& & \forall k \in [\![ 0, n ]\!] \, .
\end{align*}
By taking linear combinations of these equations, we can replace $x^k$ with any polynomial of degree $\leq n$. In particular, we can replace $x^k$ with $\overline{P}$. Then
\begin{align*}
0 = \int_{\mathbb{R}} |P(x)|^2 \mathrm{e}^{-V(x)}\, \mathrm{d}x + \lambda \int_{\mathbb{R}} |P^\prime(x)|^2 \mathrm{e}^{-V(x)}\, \mathrm{d}x   \geq \int_{\mathbb{R}} |P(x)|^2 \mathrm{e}^{-V(x)}\, \mathrm{d}x \, .
\end{align*}
Hence $P \equiv 0$ and therefore $Q \equiv 0$.
\end{proof}
\begin{definition}\label{Rdef} Let $n \geq 1$. Define $R_n^{(s)}$, for $s \in [\![ 1 , D-1 ]\!] \setminus \{D/2\}$, to be the unique (complex) polynomial of degree $\leq n + D-3$ such that $\Xi_k[R_n^{(s)} ] = 0$ for all $k \in [\![0, n-1]\!]$ and $\Theta_j[R_n^{(s)} ] = \delta_{sj}$ for all $j \in [\![ 1 , D-1 ]\!] \setminus \{D/2\}$. By \cref{linearindep}, $R_n^{(s)}$ exists and is unique.
\end{definition}
We finally arrive at one of our principal results, namely that $\{ \Psi_n \}_{n \geq 1}$ can be regarded as a kind of multiple-orthogonal polynomial of Type II.
\begin{corollary}[Dual Sobolev orthogonal polynomials as a Type II multiple orthogonal polynomial]\label{dualsobolevMOP} Let $n \geq 1$ be an integer. Consider the following problem. We are asked to find a monic polynomial of degree exactly $n+D-2$ such that for all $k \in [\![0, n-1 ]\!]$
\begin{align*}
\int_{\mathbb{R}} x^k Q(x) \mathrm{e}^{-V(x)}\, \mathrm{d}x = 0
\end{align*}
and for all $j \in [\![ 1, D-1 ]\!] \setminus \{ D/2 \}$
\begin{align*}
\int_0^{\infty_{2j}} Y_{2j}(z) \mathrm{e}^{-\frac{1}{2}V(z)} Q(z) \, \mathrm{d}z - \beta_j \int_0^{\infty_0} Y_{0}(z) \mathrm{e}^{-\frac{1}{2}V(z)} Q(z) \, \mathrm{d}z - \kappa_j \int_0^{\infty_D} Y_{D}(z) \mathrm{e}^{-\frac{1}{2}V(z)} Q(z) \, \mathrm{d}z  = 0 \, .
\end{align*}
Then this problem has the unique solution $Q \equiv \Psi_n$.
\end{corollary}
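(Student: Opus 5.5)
The proof splits into existence (that $\Psi_n$ satisfies all the conditions) and uniqueness (via \cref{linearindep}).

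\textbf{Existence.} By construction in \eqref{PsiP}, $\Psi_n$ is monic of degree exactly $n+D-2$ for $n\geq 1$, and \eqref{Psiorthog} gives the $n$ moment conditions for $k\in[\![0,n-1]\!]$. For the remaining $D-2$ conditions, set $\Phi(z)=-\lambda\nu_n P_n(z)\mathrm{e}^{-\frac12 V(z)}$. The conjugation $\psi=F\mathrm{e}^{-\frac12 V}$ from \eqref{psiequation} turns the operator $F\mapsto V'F'-F''+\lambda^{-1}F$ into $-(\frac{\mathrm{d}^2}{\mathrm{d}z^2}-U)$ acting on $F\mathrm{e}^{-\frac12 V}$. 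A direct computation then shows that $\Phi''-U\Phi=\Psi_n\mathrm{e}^{-\frac12 V}=:\widetilde{\Psi}$; checking the overall sign of this identity is the only point needing care. Since $\Phi$ is a polynomial times $\mathrm{e}^{-\frac12 V}$, the implication $(1)\Rightarrow(2)$ of \cref{polynomialcondition} with $Q=\Psi_n$ gives \eqref{linearcondition} for every $j\in[\![1,D-1]\!]\setminus\{D/2\}$. These are exactly the stated $\Theta$-type conditions.

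\textbf{Uniqueness.} Suppose $Q$ is another monic solution of degree $n+D-2$, and let $\Delta=Q-\Psi_n$. Then $\Delta\in\mathscr{P}^{\mathbb{C}}_{n+D-3}$, $\Xi_k[\Delta]=0$ for $k\in[\![0,n-1]\!]$, and $\Theta_j[\Delta]=0$ for all admissible $j$, since $\Theta_j$ is just $-\frac{1}{2\pi\mathrm{i}}$ times the left-hand side of the stated condition. Now apply \cref{linearindep} with $n$ replaced by $n-1$, which is allowed since $n-1\geq 0$. It says that $\{\Xi_k\}_{k\in[\![0,n-1]\!]}\cup\{\Theta_j\}$ is a basis of $(\mathscr{P}^{\mathbb{C}}_{n+D-3})^\ast$. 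So $\Delta$ is annihilated by a basis of the dual space and must vanish, giving $Q=\Psi_n$.

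Equivalently, one can run the argument of \cref{linearindep} directly. \cref{polynomialcondition} $(2)\Rightarrow(1)$ gives $\Delta=\lambda V'P'-\lambda P''+P$ with $\deg P\leq n-1$, by the degree count of the injectivity lemma. The orthogonality against $x^k$ for $k\leq n-1$, tested against $\overline{P}$, then forces $\int|P|^2\mathrm{e}^{-V}=0$.

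\textbf{Main obstacle.} The only delicate point is the index bookkeeping: one must apply \cref{linearindep} in dimension $n+D-3$ (that is, with $n-1$), not $n+D-2$. One must also observe that the degree bound for $P$, namely $\deg P+D-2\leq n+D-3$, holds automatically; this uses $D\geq 4$ and the constant case in the injectivity lemma. Everything else is a direct citation of earlier results.
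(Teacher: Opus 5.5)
Your proposal is correct and follows the same route the paper takes implicitly: existence from \eqref{Psiorthog} together with the implication $(1)\Rightarrow(2)$ of \cref{polynomialcondition}, and uniqueness from \cref{linearindep} applied with $n-1$ in place of $n$, which is exactly how the paper uses that lemma in the definition of $R_n^{(s)}$ and in \cref{sobolevMOP}. One small slip: the right choice is $\Phi=-\nu_n P_n\mathrm{e}^{-\frac12 V}$, since your $\Phi=-\lambda\nu_n P_n\mathrm{e}^{-\frac12 V}$ gives $\Phi''-U\Phi=\lambda\Psi_n\mathrm{e}^{-\frac12 V}$; this is harmless because \eqref{linearcondition} is homogeneous in $\widetilde{\Psi}$ and $\lambda\neq 0$.
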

However our original interest was in the Sobolev orthogonal polynomials $\{ P_n \}_{n \in \mathbb{N}}$ and not their duals.
\begin{corollary}[Sobolev orthogonal polynomials as a Type I multiple orthogonal polynomial]\label{sobolevMOP} Let $n \geq 1$, and consider the following problem. We are asked to find a monic polynomial $P$ of degree exactly $n$ and a vector of constants $\boldsymbol{c} = (c_j)_{j \in [\![1, D-1]\!]\setminus\{D/2\}}$ such that
\begin{align*}
&\int_{\mathbb{R}} x^k P(x) \mathrm{e}^{-V(x)} \, \mathrm{d}x + \\
& \sum_{\substack{j = 1 \\ j \neq D/2}}^{D-1} c_j \Big\{ \int_0^{\infty_{2j}} Y_{2j}(z) \mathrm{e}^{-\frac{1}{2}V(z)} z^k \, \mathrm{d}z - \beta_j \int_0^{\infty_0} Y_{0}(z) \mathrm{e}^{-\frac{1}{2}V(z)} z^k \, \mathrm{d}z - \kappa_j \int_0^{\infty_D} Y_{D}(z) \mathrm{e}^{-\frac{1}{2}V(z)} z^k \, \mathrm{d}z \Big\} = 0
\end{align*}
for all $k \in [\![0, n+D-3]\!]$. Then this problem has a unique solution $(P, \boldsymbol{c})$, where $P = P_n$.
\end{corollary}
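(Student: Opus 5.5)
Under the substitution $Q = \lambda V' P' - \lambda P'' + P$, the $n+D-2$ conditions in the statement are exactly the transposed version of the functionals $\Xi_k$ and $\Theta_j$ of \cref{linearindep}. So the proof should be linear algebra on that duality.

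\emph{Reformulation.} Write $L[P] = P - \lambda P'' + \lambda V'P'$. For a polynomial $P$ the functional $\Xi_k$ satisfies, after integrating by parts,
\begin{align*}
\int_{\mathbb{R}} x^k P(x)\mathrm{e}^{-V(x)}\,\mathrm{d}x = \langle X^k, P\rangle_S - \lambda\int_{\mathbb{R}} kx^{k-1}P'(x)\mathrm{e}^{-V(x)}\,\mathrm{d}x .
\end{align*}
A cleaner route is to pair against test polynomials. Define the bilinear form $B(Q,T) := \int_{\mathbb{R}} Q T \mathrm{e}^{-V} + \sum_j c_j\,\Theta'_j[QT]$, where $\Theta'_j$ is the bracketed functional with monomials replaced by general polynomials. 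The stated conditions say that the linear functional $T \mapsto \int P T \mathrm{e}^{-V} + \sum_j c_j \Theta'_j[T]$ vanishes on $\mathscr{P}_{n+D-3}^{\mathbb{C}}$.

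\emph{Existence.} Take $P = P_n$ and evaluate on $T = L[S]$ for $S \in \mathscr{P}_{n-1}$. First, $\int P_n L[S]\mathrm{e}^{-V} = \langle P_n, S\rangle_S = 0$ by integration by parts. Second, $\Theta'_j[L[S]] = 0$ by the implication (1)$\Rightarrow$(2) of \cref{polynomialcondition}, up to the $-2\pi\mathrm{i}$ normalisation and the sign conventions relating $L$ to $\Phi''-U\Phi$. So the functional vanishes automatically on $L[\mathscr{P}_{n-1}]$, for any choice of $c$. By the injectivity lemma this is a subspace of dimension $n$ inside $\mathscr{P}_{n+D-3}$, so it has codimension $D-2$.

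Now choose a complement spanned by the polynomials $R_n^{(s)}$ of \cref{Rdef}. These are characterised by $\Xi_k[R^{(s)}]=0$ for $k<n$ and $\Theta_j[R^{(s)}]=\delta_{sj}$, and they are independent modulo $L[\mathscr{P}_{n-1}]$. Indeed, if some combination $\sum a_s R^{(s)}$ lay in that image, \cref{polynomialcondition} would force all $\Theta_j$ of it to vanish, giving $a=0$. Requiring the functional to vanish on each $R^{(s)}$ gives $D-2$ equations of the form $\int P_n R^{(s)}\mathrm{e}^{-V} \pm 2\pi\mathrm{i}\, c_s = 0$. These are diagonal in $c$ and therefore uniquely solvable. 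This produces the vector $\boldsymbol{c}$.

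\emph{Uniqueness.} Suppose $(P,\boldsymbol{c})$ is a solution, with $P$ monic of degree $n$. Restricting to $T = L[S]$ kills the $c$-terms and gives $\langle P, S\rangle_S = 0$ for all $S\in\mathscr{P}_{n-1}$. Since $P$ is monic of degree $n$, this forces $P = P_n$. The $R^{(s)}$ equations then determine $\boldsymbol{c}$ uniquely.

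\emph{Main obstacle.} The one delicate point is the claim that $\Theta'_j[L[S]]=0$ with $L[S]$ here in the role of $Q$. The integrand in $\Theta'_j$ is $Y\,\mathrm{e}^{-V/2}\,L[S]$, and I need it to equal $Y(\Phi''-U\Phi)$ with $\Phi = S\mathrm{e}^{-V/2}$. A direct calculation gives $\Phi''-U\Phi = -\lambda^{-1}L[S]\,\mathrm{e}^{-V/2}$, so the two agree up to a constant. This is exactly the situation covered by \cref{polynomialcondition}, and the vanishing follows.
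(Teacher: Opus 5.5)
Your argument is correct. It tests against the same two families as the paper: the image $L[\mathscr{P}_{n-1}]$ (the paper uses $\Psi_0,\dots,\Psi_{n-1}$, which span exactly this space) and the polynomials $R_n^{(s)}$. Where you differ is existence. The paper rewrites the conditions as the identity $\sum_{i<n} b_i\Xi_i+\sum_j c_j\Theta_j=-\Xi_n$ in $(\mathscr{P}^{\mathbb{C}}_{n+D-3})^\ast$. It then gets existence and uniqueness in one stroke from the dual-basis statement of \cref{linearindep} (applied at level $n-1$), and identifies $P=P_n$ and $c_s$ only afterwards. You instead exhibit the candidate $(P_n,\boldsymbol{c})$ with $2\pi\mathrm{i}\,c_s=\int_{\mathbb{R}}P_nR_n^{(s)}\mathrm{e}^{-V}\,\mathrm{d}x$ and check it on the direct sum $\mathscr{P}^{\mathbb{C}}_{n+D-3}=L[\mathscr{P}^{\mathbb{C}}_{n-1}]\oplus\operatorname{span}\{R_n^{(s)}\}$; uniqueness is the same two tests read backwards.

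The paper's route is shorter and needs no complement. Yours is constructive, and it shows clearly why the $\boldsymbol{c}$-terms vanish on the image of $L$: only the easy direction (1)$\Rightarrow$(2) of \cref{polynomialcondition} is used there. It also shows that you use only $\Theta_j[R_n^{(s)}]=\delta_{sj}$ and never $\Xi_k[R_n^{(s)}]=0$. So all you really need is that the $\Theta_j$ are independent on $\mathscr{P}^{\mathbb{C}}_{n+D-3}$. That follows from both directions of \cref{polynomialcondition} and the injectivity lemma, since the common kernel is exactly $L[\mathscr{P}^{\mathbb{C}}_{n-1}]$, of codimension $D-2$. Hence you could avoid \cref{linearindep} altogether, with positivity entering only through the characterisation of $P_n$.

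Cosmetically, the form $B(Q,T)$ built from $\Theta'_j[QT]$ is not what the conditions describe and is never used. Drop it, along with the false start in your reformulation paragraph.
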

\begin{proof}
By taking linear combinations, we can replace $x^k$ and $z^k$ by any polynomial $Q$ of degree $\leq n+D-3$. Let us expand $P(x) = b_0 + b_1 x + \dots + b_{n-1}x^{n-1} + x^n$. Then, rearranging, we are asked to find $(b_i)_{i \in [\![ 0, n-1 ]\!]}$ and $(c_j)_{j \in [\![1, D-1]\!]\setminus\{D/2\}}$ such that
$$\sum_{i=0}^{n-1} b_i \Xi_i[Q] + \sum_{\substack{j=1 \\ j \neq D/2}}^{D-1} c_j \Theta_j [Q] = - \Xi_n[Q] \, .$$
Since this must hold for all $Q$ of degree $\leq n+D-3$, this must be an identity at the level of linear functionals in $(\mathscr{P}_{n+D-3}^{\mathbb{C}})^\ast$. But by \cref{linearindep}, the linear functional $- \Xi_n|_{\mathscr{P}_{n+D-3}^{\mathbb{C}}}$ has a unique expansion in the basis $\{ \Xi_k \}_{k \in [\![0,n-1]\!]} \cup \{ \Theta_j \}_{j \in [\![1, D-1]\!]\setminus\{D/2\}}$. This establishes in a stroke both existence and uniqueness.

Finally, let us in particular take $Q$ to be $\Psi_0, \Psi_1, \dots , \Psi_{n-1}$. Then by \cref{polynomialcondition}, we have
\begin{align*}
\int_{\mathbb{R}} \Psi_k(x) P(x) \mathrm{e}^{-V(x)} \, \mathrm{d}x = 0 & & \forall k \in [\![ 0, n-1 ]\!] \, .
\end{align*}
This implies that $\langle P_k, P \rangle_S = 0$ for all $k \in [\![ 0, n-1 ]\!]$. Since $P$ is monic of degree $n$, we must have $P = P_n$. Next, again by taking linear combinations, let us replace $x^k$ and $z^k$ with $R^{(s)}_n$ for $s \in [\![ 1, D-1 ]\!] \setminus \{ D/2\}$. This gives
$$\int_{\mathbb{R}} R^{(s)}_n(x) P_n(x) \mathrm{e}^{-V(x)}\, \mathrm{d}x - 2\pi \mathrm{i} c_s = 0$$
which determines $c_s$.
\end{proof}

\subsection{The Riemann–Hilbert problems}

It is well-known that multiple orthogonal polynomials of both Type I and Type II admit a Riemann--Hilbert representation \cite{vanassche_etal_2001}. Let
\begin{align*}
\Sigma_k &\overset{\mathrm{def}}{=} \mathrm{e}^{2\pi \mathrm{i}k/D}[0,+\infty)\, , & & k \in [\![ 0, D-1 ]\!] \\
\Sigma &\overset{\mathrm{def}}{=} \bigcup_{k=0}^{D-1} \Sigma_k \, .
\end{align*}
All these contours have orientation away from the origin except for $\Sigma_{D/2} = (-\infty, 0]$ which has orientation towards the origin. Note that this means that $\mathbb{R} = \Sigma_{D/2} \cup \Sigma_0$ with standard left to right orientation. The contours are depicted for the case $D = 8$ in Figure \ref{fig:contours}.

\begin{figure}[tbp]
\centering
\def\Dd{8}         
\def\Rout{3}       
\def\Rmid{1.7}     
\def\Rlab{3.4}     
\begin{tikzpicture}[
    scale=0.9,
    every node/.style={font=\small},
    tip/.style={-{Stealth[length=2.6mm,width=2mm]}},
    pathline/.style={very thick}
  ]
  \foreach \k in {0,1,2,3,5,6,7}{
    \draw[pathline,tip] (0,0) -- ({360*\k/\Dd}:\Rmid);
    \draw[pathline]     ({360*\k/\Dd}:\Rmid) -- ({360*\k/\Dd}:\Rout);
  }
  \draw[pathline,tip] (180:\Rout) -- (180:\Rmid);
  \draw[pathline]     (180:\Rmid) -- (0,0);
  \draw[gray!70,thin] (0:1.05) arc (0:{360/\Dd}:1.05);
  \node[gray!70,font=\scriptsize,anchor=west] at ({180/\Dd}:1.2)
        {$\tfrac{2\pi}{D}$};
  \fill (0,0) circle (1.7pt);
  \foreach \k in {0,...,7}{
    \node at ({360*\k/\Dd}:\Rlab) {$\Sigma_{\k}$};
  }
\end{tikzpicture}
\caption{The contours $\Sigma_0,\dots,\Sigma_{D-1}$ and their orientations,
drawn for $D=8$.  Note the inward orientation of $\Sigma_{D/2}$.}
\label{fig:contours}
\end{figure}

Define, for $j \in [\![ 1, D-1 ]\!] \setminus \{ D/2 \}$
\begin{equation}
\begin{split}
&\omega_j : \Sigma \longrightarrow \mathbb{C} \\
&\omega_j(z) = Y_{2j}(z) \mathrm{e}^{-\frac{1}{2}V(z)}\chi_{\Sigma_j}(z) - \beta_j  Y_0(z) \mathrm{e}^{-\frac{1}{2}V(z)} \chi_{\Sigma_0}(z) + \kappa_j  Y_D(z) \mathrm{e}^{-\frac{1}{2}V(z)} \chi_{\Sigma_{D/2}}(z) \, .
\end{split}
\end{equation}
\begin{rhp}\label{RHP1} Let $n \geq 1$ and $V$ be a polynomial of even degree $D\geq 4$ with positive leading coefficient. We look for a $D \times D$ matrix valued function
$$A_n : \mathbb{C}\setminus \Sigma \longrightarrow \mathcal{M}_D(\mathbb{C})$$
such that the following is true.
\begin{enumerate}[label=(\arabic*)]
\item $A_n$ is analytic on $\mathbb{C}\setminus \Sigma$ (i.e. its matrix elements are analytic).
\item $A_n$ has continuous boundary values up to $\Sigma \setminus\{0 \}$. That is, given any $x \in \Sigma \setminus\{0 \}$, as $z \in \mathbb{C}\setminus \Sigma$ tends to $x$, from either the left ($+$) or the right ($-$) side, non-tangentially, the limit exists. These limits are denoted
\begin{align*}
A_n^+ : \Sigma \setminus\{0 \} \longrightarrow \mathcal{M}_D(\mathbb{C}), & & A_n^- : \Sigma \setminus\{0 \} \longrightarrow \mathcal{M}_D(\mathbb{C}) 
\end{align*}
and are continuous functions. \enquote{Left} and \enquote{right} mean relative to the orientation of the contour. If we define the row vector
\begin{align}\label{omegavector}
\boldsymbol{\omega}(x) \overset{\mathrm{def}}{=} \big( \omega_1(x), \dots, \omega_{D/2-1}(x),
\omega_{D/2+1}(x), \dots, \omega_{D-1}(x) \big)
\end{align}
then the boundary values are related by the \enquote{jump matrix}
\begin{align*}
A^+_n(x) = A^-_n(x) \begin{pmatrix}
1 & \mathrm{e}^{-V(x)} \chi_{\mathbb{R}}(x) & \boldsymbol{\omega}(x) \\
0 & 1 & \mathbf{0} \\
\mathbf{0} & \mathbf{0} & \mathbb{I}_{D-2}
\end{pmatrix}\, , & & x \in \Sigma\setminus \{ 0 \} \, .
\end{align*}
\item $A_n$ is bounded in a neighbourhood of $0$.
\item We have the asymptotic normalisation
\begin{align*}
A_n(z) = \bigl(\mathbb{I} + \mathcal{O}(z^{-1})\bigr)
\operatorname{diag}\bigl(z^{n+D-2},\, z^{-n},\, z^{-1},\, \dots,\, z^{-1}\bigr)\, , & & z \to \infty \, .
\end{align*}
\end{enumerate}
\end{rhp}
\begin{lemma}\label{uniqueness} If a solution to Riemann--Hilbert problem \ref{RHP1} exists, then it is unique.
\end{lemma}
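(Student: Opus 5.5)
The plan is the standard Liouville argument for Riemann--Hilbert uniqueness, adapted to the vanishing boundary behaviour at the origin and to the non-square-diagonal normalisation. Suppose $A_n$ and $\widetilde{A}_n$ are two solutions.

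\textbf{Step 1: $\det A_n \equiv 1$.} The jump matrix is unipotent upper triangular, so it has determinant one. Hence $\det A_n$ has no jump across $\Sigma\setminus\{0\}$ and is analytic on $\mathbb{C}\setminus\{0\}$. By continuity of the boundary values and Morera's theorem it extends across the rays. Since $A_n$ is bounded near $0$, the isolated singularity at $0$ is removable, so $\det A_n$ is entire. From the asymptotic normalisation,
\begin{align*}
\det A_n(z) = \det\bigl(\mathbb{I}+\mathcal{O}(z^{-1})\bigr)\, z^{n+D-2}\, z^{-n}\, z^{-(D-2)} = 1+\mathcal{O}(z^{-1})\, .
\end{align*}
Liouville's theorem then gives $\det A_n\equiv 1$. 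In particular $A_n^{-1}$ exists, is analytic off $\Sigma$, has continuous boundary values, and is bounded near $0$, because its entries are cofactors of a bounded matrix.

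\textbf{Step 2: $\widetilde{A}_n A_n^{-1}\equiv \mathbb{I}$.} Set $H:=\widetilde{A}_n A_n^{-1}$. On $\Sigma\setminus\{0\}$,
\begin{align*}
H^+ = \widetilde{A}_n^- J\, J^{-1} (A_n^-)^{-1} = H^- \, ,
\end{align*}
where $J$ is the common jump matrix. So $H$ is analytic on $\mathbb{C}\setminus\{0\}$, again via Morera using the continuous boundary values. It is bounded near $0$, so it is entire. At infinity,
\begin{align*}
H = \bigl(\mathbb{I}+\mathcal{O}(z^{-1})\bigr)\,\Lambda\,\Lambda^{-1}\bigl(\mathbb{I}+\mathcal{O}(z^{-1})\bigr)^{-1} = \mathbb{I}+\mathcal{O}(z^{-1})\, ,
\end{align*}
where $\Lambda=\mathrm{diag}(z^{n+D-2},z^{-n},z^{-1},\dots,z^{-1})$. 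The diagonal factors cancel exactly, because both solutions share the same $\Lambda$. Liouville's theorem then gives $H\equiv\mathbb{I}$, so $\widetilde{A}_n=A_n$.

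\textbf{Main obstacle.} The delicate point is justifying removability at $z=0$, where the contours meet and the $\omega_j$ have their endpoints. This is exactly what condition (3) of \cref{RHP1} supplies: boundedness of $A_n$, and hence of $A_n^{-1}$ and $H$, by Riemann's removable singularity theorem. One must also check that the $\mathcal{O}(z^{-1})$ error terms are uniform in all directions, so that Liouville applies. The normalisation is stated as $z\to\infty$ in $\mathbb{C}\setminus\Sigma$, which we take to be uniform, as is standard.
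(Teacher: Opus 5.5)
Your proof is correct and follows the paper's argument step for step. You first get $\det A_n\equiv 1$ (no jump since the jump matrix has determinant one, Morera, removability at $0$ from boundedness, Liouville), and then show that $\widetilde{A}_n A_n^{-1}$ is entire and equal to $\mathbb{I}+\mathcal{O}(z^{-1})$, hence identically $\mathbb{I}$.
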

\begin{proof}
This proof is classical but we repeat it for completeness. Let $A_n$ be a solution of RHP \ref{RHP1}. We begin by noting that $z \mapsto \det A_n(z)$ is analytic on $\mathbb{C}\setminus \Sigma$, has continuous boundary values up to $\Sigma \setminus \{ 0 \}$, and
\begin{align*}
\det A_n^+(x) = \det A_n^-(x) & & \forall x \in \Sigma \setminus \{ 0 \}
\end{align*}
Hence by Morera's theorem $z \mapsto \det A_n(z)$ may be extended to an analytic function on $\mathbb{C}\setminus \{ 0 \}$. But since $A_n$ is bounded at $0$, the singularity at $0$ is removable, hence $\det A_n$ may be extended to an entire function. By the asymptotic condition, $\det A_n(z) = 1 + \mathcal{O}(z^{-1})$. $\det A_n$ is therefore entire and bounded, and so constant by Liouville's theorem. By the asymptotic condition $\det A_n \equiv 1$. From this we deduce that $A_n^{-1}$ is analytic on $\mathbb{C}\setminus \Sigma$ and has continuous boundary values $(A_n^{\pm})^{-1}$, and is bounded at $0$.

Next, let $\widetilde{A}_n$ be any other solution to RHP \ref{RHP1} and consider the quantity $C_n(z) := \widetilde{A}_n(z) A_n(z)^{-1}$. This function is analytic on $\mathbb{C}\setminus \Sigma$ with continuous boundary values up to $\Sigma \setminus \{ 0 \}$, and from the jump condition
\begin{align*}
C_n^+(x) = C_n^-(x) \, , & & \forall x \in \Sigma \setminus \{ 0 \} \, .
\end{align*}
Hence by Morera's theorem $C_n$ may be extended to an analytic function on $\mathbb{C}\setminus \{ 0 \}$. Since $C_n$ is bounded at $0$, this singularity is removable, and hence $C_n$ may be extended to an entire function. By the asymptotic condition
\begin{align*}
C_n(z) = \mathbb{I}+\mathcal{O}(z^{-1}) \, , & & z \to \infty
\end{align*}
and so by Liouville's theorem $C_n \equiv \mathbb{I}$.
\end{proof}
Let us now construct the solution of RHP \ref{RHP1}. From the jump condition, we have
\begin{align*}
(A_n^+)_{i1} = (A_n^-)_{i1} & & \forall  i \in [\![ 1, D ]\!] \, .
\end{align*}
Furthermore, by boundedness at $0$, we see that the singularity at $0$ is removable, hence $(A_n)_{i1}$ are entire functions. From the asymptotic condition
\begin{align*}
(A_n)_{i1}(z) &= \delta_{i1} z^{n+D-2} + \mathcal{O}(z^{n+D-3}) & &  i \in [\![ 1, D]\!] \, , \, z \to \infty \, . \\
\end{align*}
Hence by Liouville's theorem $(A_n)_{11}$ is a monic polynomial of degree exactly $n+D-2$, and $(A_n)_{i1}$, for $i \geq 2$, are polynomials of degree at most $n+D-3$.

Moving onto the second column, the jump condition gives
\begin{align*}
(A_n^+)_{i 2 }(x) = \mathrm{e}^{-V(x)} \chi_{\mathbb{R}}(x) (A_n)_{i1}(x) +   (A_n^-)_{i 2 }(x) \, , & & i \in [\![1, D]\!]\, , x \in \Sigma \setminus \{ 0 \} \, .
\end{align*}
Then from the Plemelj formula we have $(A_n)_{i 2 }(z) = C_{\mathbb{R}}(\mathrm{e}^{-V}(A_n)_{i1})(z)$. If we now expand this as $z \to \infty$ we find
\begin{align*}
C_{\mathbb{R}}(\mathrm{e}^{-V}(A_n)_{i1})(z) = - \frac{1}{2\pi \mathrm{i}} \sum_{k=0}^{n-1} z^{-k-1} \int_{\mathbb{R}} x^k \mathrm{e}^{-V(x)} (A_n)_{i1}(x) \, \mathrm{d}x + \mathcal{O}(z^{-n-1}) \, . 
\end{align*}
From the asymptotic condition we have $(A_n)_{i 2 }(z) = \delta_{i2} z^{-n} + \mathcal{O}(z^{-n-1})$. Hence 
\begin{align}\label{orthgonality}
\int_{\mathbb{R}} x^k \mathrm{e}^{-V(x)} (A_n)_{i1}(x) \, \mathrm{d}x = - 2\pi \mathrm{i} \delta_{i2}\delta_{k,n-1} & & \forall k \in [\![0, n-1 ]\!] \, , \, i \in [\![1, D ]\!] \, .
\end{align}
Considering the jump condition for the third to $D$th columns, we find that
\begin{align}\label{psiconditions}
\int_{\Sigma} \omega_j(x) (A_n)_{11}(x) \, \mathrm{d}x = \int_{\Sigma} \omega_j(x) (A_n)_{21}(x) \, \mathrm{d}x = 0  & & \forall j \in [\![ 1, D-1]\!] \setminus \{ D/2\} \, .
\end{align}
Combining \eqref{orthgonality} and \eqref{psiconditions}, we see by  \cref{dualsobolevMOP} that
\begin{align*}
(A_{n})_{11} &= \Psi_n \\
(A_{n})_{21} &= \begin{cases}  - \frac{2\pi \mathrm{i} \lambda D (n-1) \gamma}{h_{n-1}} \Psi_{n-1} & n \geq 2 \\
- \frac{2\pi \mathrm{i}}{h_0} & n = 1 \, .
\end{cases}
\end{align*}
Hence the first row of $A_n$ is
$$ \Big( \Psi_n, C_{\mathbb{R}}(\mathrm{e}^{-V} \Psi_n), C_{\Sigma}(\boldsymbol{\omega} \Psi_n)  \Big) $$
and the second row is, for $n \geq 2$
$$-\frac{2\pi \mathrm{i} \lambda D (n-1) \gamma}{h_{n-1}} \Big( \Psi_{n-1}, C_{\mathbb{R}}(\mathrm{e}^{-V} \Psi_{n-1}), C_{\Sigma}(\boldsymbol{\omega} \Psi_{n-1}) \Big)$$
whilst for $n = 1$ it is $-\frac{2\pi \mathrm{i} }{h_{0}} \Big( 1 , C_{\mathbb{R}}(\mathrm{e}^{-V} ), C_{\Sigma}(\boldsymbol{\omega} ) \Big)$. By a similar argument we can compute the remaining rows. Recall the definition of $R_n^{(s)}$ in Definition \ref{Rdef}, and define the column vector
\begin{align}\label{Rvector}
\boldsymbol{R}_n(x) \overset{\mathrm{def}}{=} \big( R_n^{(1)}(x), \dots, R_n^{(D/2-1)}(x),
R_n^{(D/2+1)}(x), \dots, R_n^{(D-1)}(x) \big)^\mathsf{T} \, .
\end{align}
From the definition of $R_n^{(s)}$, we have $C_{\mathbb{R}}(\mathrm{e}^{-V}\boldsymbol{R}_n)(z) = \mathcal{O}(z^{-n-1})$ and $C_{\Sigma}(\boldsymbol{R}_n \boldsymbol{\omega})(z) = z^{-1} \mathbb{I}_{D-2}+ \mathcal{O}(z^{-2})$.
\begin{remark}
The fact that the non-tangential limits exist up to $\Sigma\setminus\{ 0 \}$ for all the aforementioned Cauchy  transforms follows from the fact that the integrand is analytic, and hence we may deform the contour.
\end{remark}
The RHP \ref{RHP1} requires the solution to be bounded in a neighbourhood of $0$, hence we claim that all the aforementioned Cauchy transforms are bounded in a neighbourhood of $0$.
\begin{lemma} Let $f$ be an entire function such that $\int_{\Sigma} |f(z) \omega_j(z) | |\mathrm{d}z| < + \infty$ for all $j \in [\![1, D-1]\!]\setminus \{ D/2\}$. Then for all $j \in [\![1, D-1]\!]\setminus \{ D/2\}$, the Cauchy transform $C_{\Sigma}(f \omega_j)(\zeta)$ is bounded in a neighbourhood of $\zeta = 0$.
\end{lemma}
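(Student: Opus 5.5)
The plan is to exploit the fact that $f\omega_j$ is, on each ray of $\Sigma$, the restriction of an entire function, and that the three pieces of $\omega_j$ are tied together by the identity \eqref{identity}, $Y_{2j} = \beta_j Y_0 + \kappa_j Y_D$. Write $g_m(z) := f(z) Y_m(z)\mathrm{e}^{-\frac12 V(z)}$ for $m\in\{0,D,2j\}$; each $g_m$ is entire. Then
$$C_\Sigma(f\omega_j)(\zeta) = \frac{1}{2\pi\mathrm{i}}\Big[\int_{\Sigma_j}\frac{g_{2j}(x)}{x-\zeta}\,\mathrm{d}x - \beta_j\int_{\Sigma_0}\frac{g_0(x)}{x-\zeta}\,\mathrm{d}x + \kappa_j\int_{\Sigma_{D/2}}\frac{g_D(x)}{x-\zeta}\,\mathrm{d}x\Big],$$
with the orientations fixed above, so that $\Sigma_{D/2}$ runs into $0$.

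First, each individual Cauchy integral over a ray $\Sigma_m$ with entire density $g$ splits as an integral over $\{|x|\ge 1\}\cap\Sigma_m$, which is analytic and bounded for $|\zeta|<1/2$, plus $\int_0^{e^{\mathrm{i}\theta_m}} g(x)(x-\zeta)^{-1}\,\mathrm{d}x$. Writing $g(x)=g(\zeta)+(g(x)-g(\zeta))$, the second part has a bounded (entire) difference quotient. The first part is $g(\zeta)\big[\log(e^{\mathrm{i}\theta_m}-\zeta)-\log(-\zeta)\big]$, up to orientation sign. Hence near $0$,
$$C_\Sigma(f\omega_j)(\zeta) = \text{bounded} + \frac{1}{2\pi\mathrm{i}}\,\log(-\zeta)\,\big[-g_{2j}(\zeta)+\beta_j g_0(\zeta)+\kappa_j g_D(\zeta)\big]\cdot(\text{branch bookkeeping}).$$
Here the signs reflect that $\Sigma_j,\Sigma_0$ start at $0$ while $\Sigma_{D/2}$ ends at $0$. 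Comparing the endpoint terms, we get $-g_{2j}$ and $+\beta_j g_0$ from the outgoing rays, and $+\kappa_j g_D$ from the incoming ray. So the coefficient of the logarithmic singularity is $-\big(Y_{2j}-\beta_jY_0-\kappa_jY_D\big)f\mathrm{e}^{-V/2}$, which vanishes identically by \eqref{identity}.

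The step needing care is precisely this branch bookkeeping, because the three logarithms $\log(-\zeta)$ have cuts along different rays, namely the rays themselves. I will handle it by defining, for each ray, $L_m(\zeta)=\int_{\Sigma_m\cap\{|x|<1\}}\frac{\mathrm{d}x}{x-\zeta}$ explicitly. I then observe that $L_m(\zeta) + \log\zeta$ is bounded near $0$ in every sector, for any fixed branch choice of $\log\zeta$ on $\mathbb{C}\setminus\Sigma$. Sectors lie between consecutive rays, and $\log(\mathrm{e}^{\mathrm{i}\theta_m}-\zeta)-\log(-\zeta)$ differs from $-\log\zeta$ by a constant bounded by $2\pi$ depending on the sector. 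After subtracting a single common $\log\zeta$ (times the vanishing combination) from all three terms, the remainders are bounded in each of the finitely many sectors. This gives boundedness in a full punctured neighbourhood of $0$.

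Integrability at infinity is supplied by the hypothesis $\int_\Sigma|f\omega_j|\,|\mathrm{d}z|<\infty$, which is all the far part needs. The main obstacle is thus only the uniform handling of the sector-dependent constants in the logarithms, which is harmless since there are finitely many sectors.
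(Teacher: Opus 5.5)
Your proof is correct and follows the paper's own argument: split off $\Sigma\setminus D(0,1)$, which is harmless near $\zeta=0$; reduce the three short segments to bounded difference-quotient integrals plus explicit endpoint logarithms; and kill the $\log\zeta$ singularity using \eqref{identity}. The only difference is minor. The paper subtracts the constants $g_m(0)$, uses \eqref{identity} only at $z=0$, and then bounds $g(\zeta)\log\frac{\zeta-1}{\zeta}$ via $g(0)=0$, whereas you subtract $g_m(\zeta)$, use \eqref{identity} as an identity in $\zeta$, and do the sector-by-sector branch bookkeeping more explicitly than the paper does.
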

\begin{proof}
Let us write $C_{\Sigma}(f \omega_j)(\zeta) = C_{\Sigma \cap D(0,1)}(f \omega_j)(\zeta)+ C_{\Sigma \setminus D(0,1)}(f \omega_j)(\zeta)$. $C_{\Sigma \setminus D(0,1)}(f \omega_j)(\zeta)$ is clearly bounded in a neighbourhood of $\zeta = 0$, hence this leaves $C_{\Sigma \cap D(0,1)}(f \omega_j)(\zeta)$. Explicitly, we have
\begin{align*}
C_{\Sigma \cap D(0,1)}(f \omega_j)(\zeta) &= \frac{1}{2\pi \mathrm{i}}\int_0^{\mathrm{e}^{2\pi \mathrm{i}j /D}}Y_{2j}(z) \mathrm{e}^{-\frac{1}{2}V(z) } f(z) \frac{1}{z-\zeta} \, \mathrm{d}z - \frac{1}{2\pi \mathrm{i}} \beta_j \int_0^{1}Y_{0}(z) \mathrm{e}^{-\frac{1}{2}V(z) } f(z) \frac{1}{z-\zeta} \, \mathrm{d}z \\
&\quad - \frac{1}{2\pi \mathrm{i}} \kappa_j \int_0^{-1} Y_{D}(z) \mathrm{e}^{-\frac{1}{2}V(z)} f(z) \frac{1}{z-\zeta} \, \mathrm{d}z \, .
\end{align*}
Let
\begin{align*}
g_1(z) &= Y_{2j}(z) \mathrm{e}^{-\frac{1}{2}V(z)} f(z) - Y_{2j}(0) \mathrm{e}^{-\frac{1}{2}V(0)} f(0) \\
g_2(z) &= \beta_j Y_{0}(z) \mathrm{e}^{-\frac{1}{2} V(z)} f(z) - \beta_j Y_{0}(0) \mathrm{e}^{-\frac{1}{2}V(0)} f(0) \\
g_3(z) &= \kappa_j Y_{D}(z) \mathrm{e}^{-\frac{1}{2}V(z)} f(z) - \kappa_j Y_{D}(0) \mathrm{e}^{-\frac{1}{2}V(0)} f(0) \, .
\end{align*}
Then by \eqref{identity}, we see that
\begin{align*}
C_{\Sigma \cap D(0,1)}(f \omega_j)(\zeta) &= \frac{1}{2\pi \mathrm{i}}\int_0^{\mathrm{e}^{2\pi \mathrm{i}j /D}}g_1(z) \frac{1}{z-\zeta} \, \mathrm{d}z - \frac{1}{2\pi \mathrm{i}}  \int_0^{1} g_2(z) \frac{1}{z-\zeta} \, \mathrm{d}z \\
&\quad - \frac{1}{2\pi \mathrm{i}} \int_0^{-1} g_3(z) \frac{1}{z-\zeta} \, \mathrm{d}z 
+ \frac{1}{2\pi \mathrm{i}} Y_{2j}(0) \mathrm{e}^{-\frac{1}{2}V(0)} f(0) \log (\zeta - \mathrm{e}^{2\pi \mathrm{i}j /D}) \\
&\quad - \frac{1}{2\pi \mathrm{i}} \beta_j Y_{0}(0) \mathrm{e}^{-\frac{1}{2}V(0)} f(0) \log (\zeta - 1) - \frac{1}{2\pi \mathrm{i}} \kappa_j Y_{D}(0) \mathrm{e}^{-\frac{1}{2}V(0)} f(0) \log (\zeta + 1) \, .
\end{align*}
By rescaling variables, all three integrals can be put in the form $\int_0^1 g(z) \frac{1}{z-\zeta} \, \mathrm{d}z$ for some entire function $g$ such that $g(0) = 0$. We may rewrite this as $\int_0^1 \frac{g(z) - g(\zeta)}{z-\zeta} \, \mathrm{d}z + g(\zeta) \log \frac{\zeta-1}{\zeta}$. The first integral term is bounded on compact sets, and the second term is bounded in a neighbourhood of $\zeta = 0$ because $g(0) = 0$.
\end{proof}
\begin{remark} The fact that $A_n$ is bounded at $0$ is made possible by fact that the jump matrix obeys the appropriate cyclicity condition around $0$ (see Definition 2.55 in \cite{trogdon2015riemann}), which is a self-intersection point of $\Sigma$.
\end{remark}

From this we conclude the following.
\begin{theorem}\label{RHP1solution}
The unique solution of the Riemann--Hilbert problem \ref{RHP1} is
\begin{align*}
A_n = \begin{pmatrix}
\Psi_n & C_{\mathbb{R}}\big(\mathrm{e}^{-V} \Psi_n\big) & C_{\Sigma}\big(\Psi_n \boldsymbol{\omega}\big) \\[2pt]
- \frac{2\pi \mathrm{i}\lambda}{\nu_{n-1} h_{n-1}}\Psi_{n-1} & - \frac{2\pi \mathrm{i}\lambda}{\nu_{n-1} h_{n-1}} C_{\mathbb{R}}\big(\mathrm{e}^{-V} \Psi_{n-1}\big) &  - \frac{2\pi \mathrm{i}\lambda}{\nu_{n-1} h_{n-1}} C_{\Sigma}\big(\Psi_{n-1} \boldsymbol{\omega}\big) \\[2pt]
\boldsymbol{R}_n & C_{\mathbb{R}}\big(\mathrm{e}^{-V} \boldsymbol{R}_n\big) & C_{\Sigma}\big(\boldsymbol{R}_n \boldsymbol{\omega}\big)
\end{pmatrix} \, .
\end{align*}
\end{theorem}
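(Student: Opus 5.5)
Uniqueness is already settled by \cref{uniqueness}, so the plan is to verify that the displayed matrix satisfies each of the four conditions of RHP \ref{RHP1}. I will check the first column, then the jumps, then boundedness at $0$, and finally the asymptotic normalisation, working row by row. The first column consists of polynomials and so has no jump. In each row the second entry is $C_{\mathbb{R}}(\mathrm{e}^{-V}\cdot)$ of the first entry, so the Plemelj--Sokhotski formula gives the jump $(A_n^+)_{i2}-(A_n^-)_{i2}=\mathrm{e}^{-V}\chi_{\mathbb{R}}(A_n)_{i1}$. Likewise each entry in columns $3$ to $D$ is $C_\Sigma(\omega_j\,\cdot)$ of the first entry, giving the jump $\omega_j (A_n)_{i1}$ on $\Sigma\setminus\{0\}$. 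Together these reproduce exactly the block jump matrix. Continuity of the boundary values follows from the remark on contour deformation, since the densities are analytic on each ray. Boundedness at $0$ follows from the preceding lemma with $f=\Psi_n,\Psi_{n-1},R_n^{(s)}$. For columns $1$ and $2$ it follows from the standard fact that $C_{\mathbb{R}}$ of an entire, rapidly decaying density is bounded near $0$, because $0$ is an interior point of $\mathbb{R}$.

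The main work is the asymptotic normalisation, which I will check row by row using the Poincaré expansion of the Cauchy transform from the Notation section.

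\textbf{Row 1.} $\Psi_n$ is monic of degree $n+D-2$. By \eqref{Psiorthog}, the first $n$ moments against $\mathrm{e}^{-V}$ vanish, so $C_{\mathbb{R}}(\mathrm{e}^{-V}\Psi_n)=\mathcal{O}(z^{-n-1})$. By \cref{polynomialcondition}, applied to $Q=\Psi_n$, which lies in the image of $P\mapsto \nu_n(V'P'-P''+\lambda^{-1}P)$, we have $\int_\Sigma \omega_j\Psi_n=0$, so $C_\Sigma(\Psi_n\omega_j)=\mathcal{O}(z^{-2})$. Right-multiplying by $\operatorname{diag}(z^{-(n+D-2)},z^n,z,\dots,z)$ turns this row into $(1+\mathcal{O}(z^{-1}),\mathcal{O}(z^{-1}),\mathcal{O}(z^{-1}))$, as required.

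\textbf{Row 2.} For $n\geq2$, $\Psi_{n-1}$ has degree $n+D-3$, which gives $\mathcal{O}(z^{-1})$ after scaling. Its $\Sigma$-moments vanish again by \cref{polynomialcondition}. The leading term of $C_{\mathbb{R}}(\mathrm{e}^{-V}\Psi_{n-1})$ is $-\frac{1}{2\pi\mathrm{i}}z^{-n}\int x^{n-1}\Psi_{n-1}\mathrm{e}^{-V}$. Writing $x^{n-1}=P_{n-1}+(\text{lower})$ and integrating by parts, this integral equals $\langle P_{n-1},P_{n-1}\rangle_S\,\nu_{n-1}/\lambda=h_{n-1}\nu_{n-1}/\lambda$, using $\Psi_{n-1}=\nu_{n-1}\lambda^{-1}(P_{n-1}-\lambda P_{n-1}''+\lambda V'P_{n-1}')$. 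Hence the prefactor $-2\pi\mathrm{i}\lambda/(\nu_{n-1}h_{n-1})$ makes the $(2,2)$ entry equal to $z^{-n}(1+\mathcal{O}(z^{-1}))$. The case $n=1$ is handled identically with $\Psi_0=1$ and $\nu_0=\lambda$.

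\textbf{Rows 3 to $D$.} By \cref{Rdef}, $R_n^{(s)}$ has degree at most $n+D-3$, its first $n$ moments against $\mathrm{e}^{-V}$ vanish, and $\Theta_j[R_n^{(s)}]=\delta_{sj}$, so that $-\int_\Sigma \omega_jR_n^{(s)}\,\frac{\mathrm{d}z}{2\pi\mathrm{i}}=\delta_{sj}$. The orientation of $\Sigma_{D/2}$ must be checked to match the sign of $\kappa_j$ in $\omega_j$. With that in place, $C_\Sigma(R_n^{(s)}\omega_j)=\delta_{sj}z^{-1}+\mathcal{O}(z^{-2})$, which gives the identity block.

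I expect the main obstacle to be bookkeeping rather than analysis. Two points need care: the exact constant in row 2, which requires $\langle \Psi_{n-1},x^{n-1}\rangle$ to be related correctly to $h_{n-1}$, and the sign and orientation conventions linking $\omega_j$ to $\Theta_j$, including the fact that $\omega_j$ is supported on $\Sigma_j$ while $Y_{2j}$ is integrated along the direction $\infty_{2j}$. Note that the ray $\Sigma_j$ of angle $2\pi j/D$ is precisely the direction $\infty_{2j}$, so these agree.
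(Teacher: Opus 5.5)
Your proposal is correct, but it runs in the opposite logical direction from the paper.

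The paper argues by necessity. Assuming a solution exists, the trivial jump and Liouville make the first column polynomial, Plemelj makes the remaining columns Cauchy transforms of it, and the normalisation becomes \eqref{orthgonality} and \eqref{psiconditions}. \Cref{dualsobolevMOP} (via \cref{linearindep}) then pins down $(A_n)_{11}=\Psi_n$ and $(A_n)_{21}$ as the stated multiple of $\Psi_{n-1}$. The $R_n$ rows and boundedness at $0$ are checked directly, much as you do.

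You instead verify the displayed candidate against the four conditions and let \cref{uniqueness} do the identifying. For the first two rows you therefore need only \eqref{Psiorthog}, the implication (1)$\Rightarrow$(2) of \cref{polynomialcondition}, and Sobolev orthogonality for the constant. \Cref{linearindep} still enters, but only through the existence of $R_n^{(s)}$.

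Your route buys two things. It actually establishes existence, which the necessity argument leaves implicit. It also makes the row-two constant explicit via $\int_{\mathbb{R}} x^{n-1}\Psi_{n-1}\mathrm{e}^{-V}\,\mathrm{d}x=\nu_{n-1}h_{n-1}/\lambda$, which the paper only states. The paper's route has the advantage of explaining where the formula comes from rather than requiring it to be guessed.

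The orientation point you flag does check out. Since $\Sigma_{D/2}$ points toward $0$, $\int_\Sigma\omega_jQ\,\mathrm{d}z$ is exactly the left-hand side of \eqref{linearcondition}. Hence $\Theta_j[Q]=-\frac{1}{2\pi\mathrm{i}}\int_\Sigma\omega_jQ\,\mathrm{d}z$, and the identity block in rows $3$ to $D$ comes out with the right sign.
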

\begin{remark} Let us remark that RHP \ref{RHP1} has no solution for $n = 0$. This is because $\Psi_0$, rather than being monic of degree $D-2$, is monic of degree $0$ (namely, the constant polynomial $1$). It is easily seen that there is no polynomial $P$ which can make the polynomial $P - \lambda P^{\prime\prime} + \lambda V^\prime P^\prime$ have degree exactly $D-2$. For similar reasons, the polynomials $R^{(s)}_n$ are undefined for $n = 0$ hence we must require $n \geq 1$ in \cref{Rdef}.
\end{remark}
We have already established in the proof of \cref{uniqueness} that $\det A_n \equiv 1$. Given this, it is natural to consider the \enquote{dual} Riemann--Hilbert problem $\widehat{A_n} = A_n^{-\mathsf{T}}$ (inverse transpose).
\begin{rhp}\label{RHP2} Let $n \geq 1$ and let $V$ be a polynomial of even degree $D\geq 4$ with positive leading coefficient. We look for a $D \times D$ matrix valued function
$$\widehat{A_n} : \mathbb{C}\setminus \Sigma \longrightarrow \mathcal{M}_D(\mathbb{C})$$
such that the following is true.
\begin{enumerate}[label=(\arabic*)]
\item $\widehat{A_n}$ is analytic on $\mathbb{C}\setminus \Sigma$ (i.e. its matrix elements are analytic).
\item $\widehat{A_n}$ has continuous boundary values up to $\Sigma \setminus\{0 \}$. That is, given any $x \in \Sigma \setminus\{0 \}$, as $z \in \mathbb{C}\setminus \Sigma$ tends to $x$, from either the left ($+$) or the right ($-$) side, non-tangentially, the limit exists. These limits are denoted
\begin{align*}
\widehat{A_n}^+ : \Sigma \setminus\{0 \} \longrightarrow \mathcal{M}_D(\mathbb{C}), & & \widehat{A_n}^- : \Sigma \setminus\{0 \} \longrightarrow \mathcal{M}_D(\mathbb{C})
\end{align*}
and are continuous functions. \enquote{Left} and \enquote{right} mean relative to the orientation of the contour. The boundary values are related by the \enquote{jump matrix}
\begin{align*}
\widehat{A_n}^+(x) = \widehat{A_n}^-(x) \begin{pmatrix}
1 & 0 & \mathbf{0} \\
-\mathrm{e}^{-V(x)} \chi_{\mathbb{R}}(x) & 1 & \mathbf{0} \\
-\boldsymbol{\omega}(x)^{\mathsf{T}} & \mathbf{0} & \mathbb{I}_{D-2}
\end{pmatrix}\, , & & x \in \Sigma\setminus \{ 0 \} \, ,
\end{align*}
which is the inverse transpose of the jump matrix of RHP \ref{RHP1}.
\item $\widehat{A_n}$ is bounded in a neighbourhood of $0$.
\item We have the asymptotic normalisation
\begin{align*}
\widehat{A_n}(z) = \big(\mathbb{I} + \mathcal{O}(z^{-1})\big) \operatorname{diag}\big( z^{-n-D+2},\, z^{n},\, z,\, \dots,\, z \big) \, , & & z \to \infty \, .
\end{align*}
\end{enumerate}
\end{rhp}
Existence of a solution to RHP \ref{RHP2} follows from \cref{RHP1solution}, since we have already constructed $A_n$ and shown that $\det A_n \equiv 1$. Uniqueness follows by the same argument as \cref{uniqueness}. Let us compute the $(2,2)$ matrix element of $\widehat{A_n}$. It is possible to compute the other matrix elements, however this one will turn out to be the most interesting to us.

From the jump condition we see that
\begin{align*}
(\widehat{A_n}^+)_{ij}(x) = (\widehat{A_n}^-)_{ij}(x) & & i \in [\![1, D]\!]\, , \, j \in [\![2, D]\!] \, .
\end{align*}
From boundedness at $0$ we conclude, by Morera's theorem, that $(\widehat{A_n})_{ij}$ are entire functions, for $i \in [\![1, D]\!]$ and $j \in [\![2, D]\!]$. Next, from the normalisation condition,
$$(\widehat{A_n})_{i2}(z) = \delta_{i2} z^{n} + \mathcal{O}(z^{n-1}) \, .$$
Hence by Liouville's theorem $(\widehat{A_n})_{22}$ is a monic polynomial of degree exactly $n$, whilst $(\widehat{A_n})_{i2}$ for $i \neq 2$ is a polynomial of degree at most $n-1$. By a similar argument, one sees that
\begin{align*}
(\widehat{A_{n}})_{ij}(z) = \delta_{ij} z + c_{ij} \, , & & i \in [\![1, D]\!] \, , \, j \in [\![ 3, D ]\!]
\end{align*}
for some collection of constants $c_{ij}$. Let us gather the constants appearing in the second row into the column vector
\begin{align}\label{cvector}
\boldsymbol{c}_2 \overset{\mathrm{def}}{=} \big( c_{2 3}, \, \dots , \, c_{2 D} \big)^{\mathsf{T}} \in \mathbb{C}^{D-2} \, ,
\end{align}
whose entries are ordered so as to match those of $\boldsymbol{\omega}$ in \eqref{omegavector}. Next, let us consider the $(2,1)$ matrix element. We see that
\begin{align*}
(\widehat{A_n}^+)_{21}(x) = (\widehat{A_n}^-)_{21}(x) - \mathrm{e}^{-V(x)}\chi_{\mathbb{R}}(x) (\widehat{A_n})_{22}(x) - \boldsymbol{\omega}(x) \boldsymbol{c}_2 \, , & & x \in \Sigma \setminus \{ 0 \} \, .
\end{align*}
By the Plemelj formula we then have
\begin{align*}
(\widehat{A_n})_{21}(z) = - C_{\mathbb{R}}\big(\mathrm{e}^{-V} (\widehat{A_n})_{22}\big)(z) - C_{\Sigma}(\boldsymbol{\omega})(z) \, \boldsymbol{c}_2 \, .
\end{align*}
From the scaling we have $(\widehat{A_n})_{21}(z) = \mathcal{O}(z^{-n-D+1})$, which implies that
\begin{align*}
\int_{\mathbb{R}} x^k \, \mathrm{e}^{-V(x)} (\widehat{A_n})_{22} (x) \, \mathrm{d}x + \bigg( \int_{\Sigma} x^k \, \boldsymbol{\omega}(x) \, \mathrm{d}x \bigg) \boldsymbol{c}_2 = 0 \, , & & \forall k \in [\![0, n+D-3]\!] \, .
\end{align*}
Hence by \cref{sobolevMOP}, $(\widehat{A_n})_{22} = P_n$, our $n$th monic Sobolev orthogonal polynomial. We record this as follows.
\begin{theorem}\label{RHP2solution}
The unique solution $\widehat{A_n} = A_n^{-\mathsf{T}}$ of the Riemann--Hilbert problem \ref{RHP2} satisfies
\begin{align*}
(\widehat{A_n})_{22} = (A_n^{-1})_{22} = P_n \, ,
\end{align*}
the $n$th monic Sobolev orthogonal polynomial.
\end{theorem}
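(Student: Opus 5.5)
The statement has three parts: $\widehat{A_n}$ solves RHP \ref{RHP2}, it is the unique solution, and its $(2,2)$ entry is $P_n$. I will assemble the proof from pieces already established.

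\textbf{Existence.} By \cref{RHP1solution} the solution $A_n$ of RHP \ref{RHP1} exists, and the proof of \cref{uniqueness} gives $\det A_n \equiv 1$. Hence $\widehat{A_n} := A_n^{-\mathsf{T}}$ is analytic off $\Sigma$, has continuous boundary values, and is bounded near $0$; its entries are cofactors of $A_n$, which are bounded there. Transposing and inverting $A_n^+ = A_n^- J$ gives $\widehat{A_n}^+ = \widehat{A_n}^- J^{-\mathsf{T}}$. A direct computation shows that $J^{-\mathsf{T}}$ is the stated lower-triangular jump. For the normalisation, $A_n = (\mathbb{I}+\mathcal{O}(z^{-1}))\Lambda$ with $\Lambda$ diagonal, so
\[
A_n^{-\mathsf{T}} = (\mathbb{I}+\mathcal{O}(z^{-1}))^{-\mathsf{T}}\Lambda^{-1} = (\mathbb{I}+\mathcal{O}(z^{-1}))\,\mathrm{diag}(z^{-n-D+2},z^n,z,\dots,z).
\]

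\textbf{Uniqueness.} This repeats \cref{uniqueness}. The determinant $\det\widehat{A_n}$ has no jump, is bounded at $0$, and tends to $1$ at infinity, so it is identically $1$. For two solutions, the ratio $\widetilde{\widehat{A_n}}\,\widehat{A_n}^{-1}$ is then entire and equals $\mathbb{I}+\mathcal{O}(z^{-1})$, so it is $\mathbb{I}$ by Liouville's theorem.

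\textbf{Identification of the $(2,2)$ entry.} This is the substantive step, and I would follow the computation sketched just before the statement.

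1. The jump acts only on the first column, so columns $2,\dots,D$ have no jump. Together with boundedness at $0$ and Morera's theorem, these entries are entire.

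2. The normalisation and Liouville's theorem then show that $(\widehat{A_n})_{22}$ is a monic polynomial of degree $n$. They also show that $(\widehat{A_n})_{2j} = c_{2j}$ is constant for $j\ge 3$, since the $\delta_{2j}z$ term vanishes off the diagonal.

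3. The $(2,1)$ entry has an additive jump. By Sokhotski--Plemelj together with decay at infinity,
\[
(\widehat{A_n})_{21} = -C_{\mathbb{R}}(\mathrm{e}^{-V}P)-C_\Sigma(\boldsymbol\omega)\boldsymbol c_2 .
\]
The decay at infinity holds since the entry is $\mathcal{O}(z^{-n-D+1})$. The delicate point is justifying that the difference between this entry and the Cauchy transforms is entire and vanishing. Here I need boundedness at $0$ of both sides, which follows from the lemma on Cauchy transforms near $0$, and then apply Liouville's theorem.

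4. Expanding the Cauchy transforms at infinity and matching the required $\mathcal{O}(z^{-n-D+1})$ decay kills the coefficients of $z^{-k-1}$ for $k\in[\![0,n+D-3]\!]$. These are exactly the Type I conditions of \cref{sobolevMOP}, with $c_j$ given by the entries of $\boldsymbol c_2$.

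5. I note that the integrals of $\boldsymbol\omega$ over $\Sigma$ match the bracketed combination in \cref{sobolevMOP}, with orientation signs absorbed into the definition of $\omega_j$ on $\Sigma_{D/2}$. The uniqueness part of \cref{sobolevMOP} then gives $(\widehat{A_n})_{22}=P_n$.

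The main obstacle I expect is bookkeeping rather than analysis. I must check that $J^{-\mathsf{T}}$ has the stated form, and that the sign and orientation conventions make the moment conditions coincide exactly with those of \cref{sobolevMOP}.
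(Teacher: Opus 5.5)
Your proposal is correct and follows the paper's proof. Existence and uniqueness come from RHP \ref{RHP1} via $\det A_n \equiv 1$, the jump-free columns $2,\dots,D$ are polynomial by Morera and Liouville, and the $(2,1)$ entry, written via Plemelj as $-C_{\mathbb{R}}(\mathrm{e}^{-V}(\widehat{A_n})_{22})-C_\Sigma(\boldsymbol\omega)\boldsymbol c_2$, must decay like $\mathcal{O}(z^{-n-D+1})$, which forces exactly the Type I conditions of \cref{sobolevMOP}. You also spell out two points the paper leaves implicit, and both are correct. The first is the Liouville argument, using boundedness at $0$, that removes any entire ambiguity in the Plemelj step. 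The second is that the inward orientation of $\Sigma_{D/2}$ is what produces the sign $-\kappa_j$ in the moment conditions.
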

\begin{lemma}[Symmetry of the RHP] Let $\mathsf{P}$ be the permutation matrix on $\mathbb{R}^D$ which fixes $\mathsf{e}_1 = \mathsf{P} \mathsf{e}_1$ and  $\mathsf{e}_2 = \mathsf{P} \mathsf{e}_2$, and reverses the order of the remaining basis vectors, $\mathsf{P}\mathsf{e}_j = \mathsf{e}_{D-j+3}$ for $j \in [\![ 3, D ]\!]$. Let $\mathsf{D} = \operatorname{diag} (1,-1, - \mathsf{i}, \dots, - \mathsf{i})$. Define $\overline{A_n}(z) := \overline{A_n(\overline{z})}$. Then \cref{RHP1} possesses the symmetry $$A_n(z)  = \mathsf{P}\mathsf{D} \overline{A_n}(z) \mathsf{D}^{-1} \mathsf{P} \, .$$
\end{lemma}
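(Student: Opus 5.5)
The plan is to show that $B_n(z) := \mathsf{P}\mathsf{D}\,\overline{A_n}(z)\,\mathsf{D}^{-1}\mathsf{P}$ is itself a solution of RHP \ref{RHP1}. Once that is done, \cref{uniqueness} gives $B_n \equiv A_n$, which is the claimed symmetry.

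\textbf{Analyticity, boundedness and normalisation.} These are routine. Since $z\mapsto\overline z$ maps $\Sigma$ onto itself, $\overline{A_n}$ is analytic on $\mathbb{C}\setminus\Sigma$, has continuous boundary values on $\Sigma\setminus\{0\}$, and is bounded near $0$. For the normalisation, the asymptotic condition gives $\overline{A_n}(z) = (\mathbb{I}+\mathcal{O}(z^{-1}))\operatorname{diag}(z^{n+D-2},z^{-n},z^{-1},\dots,z^{-1})$, because the exponents are real integers. The matrices $\mathsf{D}$ and $\mathsf{P}$ commute with this diagonal matrix, since its last $D-2$ entries are equal. Hence conjugating by $\mathsf{P}\mathsf{D}$ preserves the form $(\mathbb{I}+\mathcal{O}(z^{-1}))\operatorname{diag}(\cdots)$.

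\textbf{Geometry of the jumps under conjugation.} Complex conjugation maps $\Sigma_j$ onto $\Sigma_{D-j}$ (indices mod $D$) and preserves the outward or inward orientations, in particular those of $\Sigma_0$ and $\Sigma_{D/2}$. Because it is orientation-reversing on the plane, it exchanges the $+$ and $-$ sides. Consequently $\overline{A_n}^{\pm}(x) = \overline{A_n^{\mp}(\overline x)}$, and
\[
\overline{A_n}^+(x) = \overline{A_n}^-(x)\,\overline{J(\overline x)}^{\,-1},
\]
where $J$ denotes the jump matrix of RHP \ref{RHP1}. The inverse of $J$ just negates the first-row off-diagonal entries, so $\overline{J(\overline x)}^{\,-1}$ has entries $-\mathrm{e}^{-V(x)}\chi_{\mathbb{R}}(x)$ and $-\overline{\omega_j(\overline x)}$ in its first row.

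\textbf{The main computation.} The key step is to identify $\overline{\omega_j(\overline x)}$ in terms of $\omega_{D-j}(x)$, using \cref{conjugationsymmetry}.
\begin{enumerate}[label=(\roman*)]
\item From $\overline{Y_k}=\mathrm{i}Y_{-k}$ we get $\overline{Y_{2j}}=\mathrm{i}Y_{2(D-j)}$, $\overline{Y_0}=\mathrm{i}Y_0$ and $\overline{Y_D}=\mathrm{i}Y_D$.
\item For the Wronskians, $\overline{W[Y_a,Y_b]}=W[\mathrm{i}Y_{-a},\mathrm{i}Y_{-b}]=-W[Y_{-a},Y_{-b}]$.
\item Substituting into the definitions of $\beta_j$ and $\kappa_j$, the two minus signs cancel in each ratio, giving $\overline{\beta_j}=\beta_{D-j}$ and $\overline{\kappa_j}=\kappa_{D-j}$.
\item Since $V$ has real coefficients, $\overline{\mathrm{e}^{-V(\overline x)/2}} = \mathrm{e}^{-V(x)/2}$, and conjugation carries $\chi_{\Sigma_j}$ to $\chi_{\Sigma_{D-j}}$.
\item Combining these yields $\overline{\omega_j(\overline x)} = \mathrm{i}\,\omega_{D-j}(x)$.
\end{enumerate}
I expect this step to be the main obstacle. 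It requires careful bookkeeping of signs and of the orientations of $\Sigma_0$ and $\Sigma_{D/2}$, in particular checking that the $\kappa_j$ term on the inward-oriented $\Sigma_{D/2}$ and the $\beta_j$ term on $\Sigma_0$ transform consistently.

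\textbf{Conjugating the jump by $\mathsf{D}$ and $\mathsf{P}$.} Conjugation by $\mathsf{D}$ multiplies the $(1,k)$ entry by $d_1/d_k$.
\begin{itemize}
\item The $(1,2)$ entry becomes $(-1)^{-1}\cdot(-\mathrm{e}^{-V}) = \mathrm{e}^{-V}$.
\item The $\omega$ entries become $(-\mathrm{i})^{-1}\cdot(-\mathrm{i}\,\omega_{D-j}) = \omega_{D-j}$.
\end{itemize}
Conjugation by $\mathsf{P}$ then reverses the order of columns $3,\dots,D$. This sends the slot carrying $\omega_{D-j}$ to the slot of $\omega_j$ in \eqref{omegavector}: the ordering skips $D/2$ symmetrically, so the reversal is exactly the involution $j\mapsto D-j$. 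Thus $B_n^+ = B_n^- J$ on $\Sigma\setminus\{0\}$. Together with the routine properties above, $B_n$ satisfies all four conditions of RHP \ref{RHP1}, and \cref{uniqueness} gives $B_n = A_n$.
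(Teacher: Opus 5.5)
Your proposal is correct and follows essentially the same route as the paper. You show that $\mathsf{P}\mathsf{D}\overline{A_n}\mathsf{D}^{-1}\mathsf{P}$ solves RHP \ref{RHP1}, using that conjugation swaps the $\pm$ sides and that $J=\mathsf{P}\mathsf{D}\overline{J}^{-1}\mathsf{D}^{-1}\mathsf{P}$, and then conclude by \cref{uniqueness}. Your derivation of $\overline{\omega_j(\overline{x})}=\mathrm{i}\,\omega_{D-j}(x)$ from \cref{conjugationsymmetry}, and your check that reversing columns $3,\dots,D$ realises $j\mapsto D-j$, supply the \enquote{direct calculation} the paper omits. The orientation worry you flag is harmless: $\omega_j$ is defined pointwise, and orientation enters only through the $\pm$ swap.
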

\begin{proof}
We show that the right hand side also solves \cref{RHP1} and so by uniqueness (\cref{uniqueness}) we deduce the equality. To do this, let us use $J(z)$ to denote the jump matrix in \cref{RHP1}, i.e.
$$J(z) := \begin{pmatrix}
1 & \mathrm{e}^{-V(x)} \chi_{\mathbb{R}}(x) & \boldsymbol{\omega}(x) \\
0 & 1 & \mathbf{0} \\
\mathbf{0} & \mathbf{0} & \mathbb{I}_{D-2}
\end{pmatrix} \, .$$
 Then $J$ possesses the symmetry $J(z) = \mathsf{P}\mathsf{D} \overline{J}(z)^{-1} \mathsf{D}^{-1}\mathsf{P}$, which may be verified by direct calculation. Moving forward, let $\widetilde{A}_n := \mathsf{P}\mathsf{D}\overline{A_n}(z) \mathsf{D}^{-1}\mathsf{P}$ and we will show that $\widetilde{A}_n = A_n$. We begin by remarking that $\widetilde{A}_n$ trivially satisfies all the requirements of \cref{RHP1} except for the jump condition which is slightly less trivial. Complex conjugation reverses which side of $\Sigma \setminus \{ 0 \}$ is the $+$ side versus the $-$ side. Hence, for $z \in \Sigma \setminus \{ 0 \}$
 \begin{align*}
 \widetilde{A}_n^+(z) = \mathsf{P}\mathsf{D}\overline{A_n^-(\overline{z})} \mathsf{D}^{-1}\mathsf{P} = \mathsf{P}\mathsf{D}\overline{A_n^+(\overline{z})} \overline{J}(z)^{-1} \mathsf{D}^{-1}\mathsf{P} = \widetilde{A}_n^-(z) \underbrace{\mathsf{P}\mathsf{D} \overline{J}(z)^{-1} \mathsf{D}^{-1}\mathsf{P} }_{=J(z)} \, .
 \end{align*}
 Hence $\widetilde{A}_n$ also solves \cref{RHP1}, and so by uniqueness $\widetilde{A}_n = A_n$.
\end{proof}

\subsection{Christoffel--Darboux-type formula}

It is well known that orthogonal polynomials obey a three term recurrence relation. A similar relation holds at the level of the Riemann--Hilbert problems \ref{RHP1} and \ref{RHP2}.
\begin{proposition}[$D+1$-term recurrence relation]\label{recurrence} Let $n \geq 1$ and let $A_n^{(1)}$ denote the next-to-leading term in the $z \to \infty$ expansion of $A_n(z)$, i.e.
\begin{align*}
A_n(z) &= \big(\mathbb{I} + A_n^{(1)} z^{-1}+ \mathcal{O}(z^{-2})\big) \operatorname{diag}\big( z^{n+D-2},\, z^{-n},\, z^{-1},\, \dots,\, z^{-1} \big)  & & z\to \infty \, .
\end{align*}
Let $\mathsf{E}^{ij}$ denote the $D\times D$ matrix with all matrix elements $0$ except for the $(i,j)$ element, which is $1$. Then we have the recursion relations
\begin{align*}
A_{n+1}(z) &= \underbrace{\Big( z \mathsf{E}^{11} + A_{n+1}^{(1)} \mathsf{E}^{11} - \mathsf{E}^{11} A_{n}^{(1)} + \mathbb{I} - \mathsf{E}^{11} - \mathsf{E}^{22} \Big)}_{\Delta_n(z)} A_n(z) \\
\widehat{A_{n+1}}(z) &= \underbrace{\Big( z \mathsf{E}^{22} - \big(A_{n+1}^{(1)}\big)^{\mathsf{T}} \mathsf{E}^{22} + \mathsf{E}^{22} \big(A_{n}^{(1)}\big)^{\mathsf{T}} + \mathbb{I} - \mathsf{E}^{11} - \mathsf{E}^{22} \Big)}_{\widehat{\Delta_n}(z)} \widehat{A_n}(z) \, .
\end{align*}
Finally, we have $\Delta_n(z)^{-1} \Delta_n(w) = (z-w)  \big(A^{(1)}_{n+1}\big)_{21} \mathsf{E}^{21} + \mathbb{I}$.
\end{proposition}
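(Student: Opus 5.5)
The plan is the standard argument for recurrences of Riemann--Hilbert problems. Set $X(z) := A_{n+1}(z) A_n(z)^{-1}$. Both $A_n$ and $A_{n+1}$ solve problems with the \emph{same} jump matrix, which does not depend on $n$. Both are bounded near $0$, and $\det A_n \equiv 1$ by the proof of \cref{uniqueness}. So $X$ has no jump on $\Sigma\setminus\{0\}$, and its singularity at $0$ is removable. By Morera's theorem $X$ is entire.

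Next I use the asymptotics. Write
\[
A_n(z) = (\mathbb{I}+A_n^{(1)}z^{-1}+\dots)\Lambda_n(z), \qquad \Lambda_n = \operatorname{diag}(z^{n+D-2},z^{-n},z^{-1},\dots,z^{-1}).
\]
Then
\[
X = (\mathbb{I}+A_{n+1}^{(1)}z^{-1}+\dots)\,\Lambda_{n+1}\Lambda_n^{-1}\,(\mathbb{I}-A_n^{(1)}z^{-1}+\dots),
\]
with $\Lambda_{n+1}\Lambda_n^{-1}=\operatorname{diag}(z,z^{-1},1,\dots,1) = z\mathsf{E}^{11}+z^{-1}\mathsf{E}^{22}+(\mathbb{I}-\mathsf{E}^{11}-\mathsf{E}^{22})$. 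So $X$ grows at most linearly and is therefore a polynomial of degree $\le 1$. I identify it by collecting the $z^1$ and $z^0$ terms of this product.
\begin{itemize}
\item The $z$ coefficient is $\mathsf{E}^{11}$.
\item The constant term is $A_{n+1}^{(1)}\mathsf{E}^{11}-\mathsf{E}^{11}A_n^{(1)}+\mathbb{I}-\mathsf{E}^{11}-\mathsf{E}^{22}$.
\end{itemize}
The $z^{-1}\mathsf{E}^{22}$ term contributes only $\mathcal{O}(z^{-1})$ and drops out, since $X$ is a polynomial. This gives $\Delta_n$.

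For $\widehat{A_{n+1}}$ I transpose and invert: $\widehat{A_{n+1}}=\Delta_n^{-\mathsf T}\widehat{A_n}$. I can then either compute $\Delta_n^{-\mathsf T}$ directly or repeat the argument for $\widehat{A_{n+1}}\widehat{A_n}^{-1}$. I prefer the latter. The expansion of $\widehat{A_n}$ is $(\mathbb{I}-(A_n^{(1)})^{\mathsf T}z^{-1}+\dots)\Lambda_n^{-1}$, because $A_n^{-\mathsf T}=(\mathbb{I}+A_n^{(1)}z^{-1}+\dots)^{-\mathsf T}\Lambda_n^{-1}$. Also $\Lambda_{n+1}^{-1}\Lambda_n=\operatorname{diag}(z^{-1},z,1,\dots,1)$. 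The same collection of terms then yields $\widehat{\Delta_n}$, with the roles of $\mathsf{E}^{11}$ and $\mathsf{E}^{22}$ swapped and the signs flipped.

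The main obstacle is the last identity, $\Delta_n(z)^{-1}\Delta_n(w)=(z-w)(A^{(1)}_{n+1})_{21}\mathsf{E}^{21}+\mathbb{I}$, which requires knowing the structure of $\Delta_n$ explicitly. Since $\det A_n\equiv1$, we have $\det\Delta_n\equiv1$. I would write $\Delta_n(z)=B+z\mathsf{E}^{11}$ with $B$ the constant term above. Because $B$ differs from $\mathbb{I}-\mathsf{E}^{11}-\mathsf{E}^{22}$ only in the first row and first column, $\Delta_n$ is the identity outside row $1$ and column $1$, except for a zero in the $(2,2)$ slot. Concretely:
\begin{itemize}
\item the $(2,1)$ entry is $a:=(A_{n+1}^{(1)})_{21}$;
\item the $(2,2)$ entry is $0$;
\item the $(1,2)$ entry is $-(A_n^{(1)})_{12}$, which I claim is nonzero.
\end{itemize}
I would verify the following from the normalisation:
\begin{itemize}
\item the $(i,1)$ entries of $B$ vanish for $i\ge3$, by reading off the $z^{0}$ coefficients of the product in rows $i\ge3$;
\item the first-row entries of $B$ in columns $j\ge3$ vanish, because the $(1,j)$ entry of $A_{n+1}$ is $\mathcal{O}(z^{-2})$.
\end{itemize}
With these in hand, $\Delta_n$ acts nontrivially only on $\mathrm{span}(\mathsf e_1,\mathsf e_2)$ as $\begin{pmatrix} z+b & c\\ a & 0\end{pmatrix}$. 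The determinant condition forces $ac=-1$. Its inverse is $\begin{pmatrix} 0 & a\\ c & -(z+b)\end{pmatrix}$ (using $-1/(ac)=1$), and a direct $2\times2$ multiplication gives $\mathbb{I}+(z-w)a\,\mathsf{E}^{21}$. Checking the vanishing of these off-block entries, using the entry-wise decay rates of the columns of $A_n$, is the step where I expect the most care to be needed.
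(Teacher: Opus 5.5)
Your derivation of $\Delta_n$ and $\widehat{\Delta_n}$ is the paper's argument: entireness from the $n$-independent jump, boundedness at $0$ and $\det A_n\equiv 1$, then reading off the $z^1$ and $z^0$ coefficients. The gap is in the final identity. Your reduction to a $2\times2$ block rests on two vanishing claims, and both are false.

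\textbf{The first column.} For $i\ge3$, the entries $(\Delta_n)_{i1}=(A^{(1)}_{n+1})_{i1}$ are the coefficients of $z^{n+D-2}$ in the polynomials $R^{(s)}_{n+1}$. Reading off $z^0$ coefficients only reproduces them; it does not force them to vanish. In fact they are never all zero. If they were, rows $3,\dots,D$ of $A_{n+1}=\Delta_nA_n$ would equal those of $A_n$. The normalisation of $A_{n+1}$ would then force $C_{\mathbb R}(\mathrm e^{-V}R^{(s)}_n)=\mathcal O(z^{-n-2})$, i.e.\ $\int_{\mathbb R}x^nR^{(s)}_n(x)\mathrm e^{-V(x)}\,\mathrm dx=0$ for every $s$. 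By the last step of the proof of \cref{sobolevMOP}, every $c_s$ would vanish. Then $P_n$ would be $L^2(\mathrm e^{-V})$-orthogonal to $\mathscr P_{n+D-3}\ni P_n$, which is absurd.

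\textbf{The first row.} Your justification for $(\Delta_n)_{1j}=-(A^{(1)}_n)_{1j}=0$, $j\ge3$, is vacuous. The $z^{-1}$ coefficient of $(\Delta_nA_n)_{1j}$ is $(A^{(1)}_n)_{1j}+(\Delta_n)_{1j}$, which is zero identically whatever the value. The claim itself also fails in general. These entries are $-\frac{1}{2\pi\mathrm i}\int_\Sigma x\Psi_n(x)\omega_j(x)\,\mathrm dx$ for $j\in[\![1,D-1]\!]\setminus\{D/2\}$. By \cref{polynomialcondition} they all vanish iff $x\Psi_n$ has the form $P-\lambda P''+\lambda V'P'$. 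Take $V=x^4/4$ and $n=1$, so $x\Psi_1=x^4+x^2/\lambda$. Matching $x^4$ forces the $x^2$-coefficient of $P$ to be $1/(2\lambda)$, which yields $x^2/(2\lambda)$ rather than $x^2/\lambda$. So $\Delta_n$ genuinely has a full first row and first column.

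Separately, your $2\times2$ inverse is miscomputed. With $ac=-1$, the inverse of $\begin{pmatrix}z+b&c\\a&0\end{pmatrix}$ is $\begin{pmatrix}0&-c\\-a&z+b\end{pmatrix}$.

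\textbf{How to repair it.} The identity is nonetheless true for the full matrix, and you already have the tool. Since $\widehat{A_{n+1}}\widehat{A_n}^{-1}=(A_{n+1}A_n^{-1})^{-\mathsf T}$, your second part gives
$\Delta_n(z)^{-1}=\widehat{\Delta_n}(z)^{\mathsf T}=z\mathsf E^{22}-\mathsf E^{22}A^{(1)}_{n+1}+A^{(1)}_n\mathsf E^{22}+\mathbb I-\mathsf E^{11}-\mathsf E^{22}$.
Now multiply by $\Delta_n(w)$:
\begin{itemize}
\item the $zw$ term vanishes because $\mathsf E^{22}\mathsf E^{11}=0$;
\item the $z$-linear term is $z\,\mathsf E^{22}\Delta_n(0)=z(A^{(1)}_{n+1})_{21}\mathsf E^{21}$;
\item the $w$-linear term is $w\,\widehat{\Delta_n}(0)^{\mathsf T}\mathsf E^{11}=-w(A^{(1)}_{n+1})_{21}\mathsf E^{21}$;
\item setting $z=w$ shows the constant term is $\mathbb I$.
\end{itemize}
This is the paper's proof, and it never needs to know which off-block entries vanish. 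If you prefer your direct route, invert the full arrowhead matrix instead. The off-block entries then cancel in $\Delta_n(z)^{-1}\Delta_n(w)$ using only $ac=-1$.
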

\begin{proof}
Define $\Delta_n(z) = A_{n+1}(z) A_n(z)^{-1}$. We have already established in the proof of \cref{uniqueness} that $\det A_n \equiv 1$, and hence $\Delta_n$ is analytic on $\mathbb{C}\setminus \Sigma$, with continuous left and right boundary values up to $\Sigma \setminus \{ 0 \}$, and is bounded in a neighbourhood of $0$. The jump matrix in RHP \ref{RHP1} does not depend on $n$, and so $\Delta_n^+(x) = \Delta_n^-(x)$. Hence $\Delta_n$ has an analytic extension to the whole of $\mathbb{C}$. Let us then expand $\Delta_n(z)$ as $z \to \infty$,
\begin{align*}
\Delta_n(z) &= \big(\mathbb{I} + A_{n+1}^{(1)} z^{-1}  + \mathcal{O}(z^{-2})\big) \operatorname{diag}\big( z,\, z^{-1},\, 1,\, \dots,\, 1 \big) \big(\mathbb{I} - A_{n}^{(1)} z^{-1}  + \mathcal{O}(z^{-2})\big)  \\
&= z \mathsf{E}^{11} + A_{n+1}^{(1)} \mathsf{E}^{11} - \mathsf{E}^{11} A_{n}^{(1)} + \mathbb{I} - \mathsf{E}^{11} - \mathsf{E}^{22} + \mathcal{O}(z^{-1}) \, .
\end{align*}
However, because $\Delta_n$ is entire and scales like a polynomial, Liouville's theorem asserts it is actually a polynomial. Hence the error term $\mathcal{O}(z^{-1})$ must be identically $0$. A similar argument holds for $\widehat{\Delta_n}(z) = \widehat{A_{n+1}}(z) \widehat{A_n}(z)^{-1}$, upon observing that the next-to-leading term of $\widehat{A_n}(z)$ is $- \big(A_{n}^{(1)}\big)^{\mathsf{T}}$.

Finally, let us consider the quantity $\Delta_n(z)^{-1} \Delta_n(w) = \widehat{\Delta_n}(z)^{\mathsf{T}} \Delta_n(w)$. By using our formulas for $\Delta_n$ and $\widehat{\Delta_n}$, we have
$$\Delta_n(z)^{-1} \Delta_n(w) = (z-w) \big(A^{(1)}_{n+1}\big)_{21} \mathsf{E}^{21} + \mathsf{C} $$
where $\mathsf{C}$ is some constant matrix. However by taking $z = w$ we see that $\Delta_n(z)^{-1} \Delta_n(z) = \mathbb{I}$, and so $\mathsf{C} = \mathbb{I}$.
\end{proof}
Finally we arrive at one of our main theorems.
\begin{theorem}[Christoffel--Darboux-type formula]\label{CDtheorem}
For all $N \geq 1$ we have
$$\mathbb{K}_N(x,y) = -\frac{1}{2\pi \mathrm{i}} \frac{\big(A^{-1}_N(x) A_N(y)\big)_{21}}{x-y}\, ,$$
where $\mathbb{K}_N$ is the projection kernel of \eqref{kernelformula}.
\end{theorem}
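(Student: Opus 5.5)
Both sides are polynomials in $x,y$, and I propose to prove the identity by induction on $N$, with the recurrence of \cref{recurrence} driving the step. Since $A_N(y)$ and $A_N^{-1}(x)=\widehat{A_N}(x)^{\mathsf T}$ are multiplied and we only want the $(2,1)$ entry, only the first column of $A_N(y)$ and the second row of $A_N^{-1}(x)$ enter:
$$\big(A_N^{-1}(x)A_N(y)\big)_{21}=\sum_{i=1}^D (\widehat{A_N})_{i2}(x)\,(A_N)_{i1}(y).$$
Every factor here is a polynomial: $(A_N)_{i1}$ by the construction before \cref{RHP1solution}, and $(\widehat{A_N})_{i2}$ by the argument before \cref{RHP2solution}. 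So no contour issues arise.

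\textbf{Telescoping step.} By \cref{recurrence},
$$A_{N+1}^{-1}(x)A_{N+1}(y)=A_N^{-1}(x)\,\Delta_N(x)^{-1}\Delta_N(y)\,A_N(y)=A_N^{-1}(x)A_N(y)+(x-y)\big(A^{(1)}_{N+1}\big)_{21}\,A_N^{-1}(x)\,\mathsf E^{21}A_N(y).$$
Taking the $(2,1)$ entry and dividing by $x-y$ gives
$$\frac{(A_{N+1}^{-1}(x)A_{N+1}(y))_{21}}{x-y}-\frac{(A_N^{-1}(x)A_N(y))_{21}}{x-y}=\big(A^{(1)}_{N+1}\big)_{21}\,(A_N^{-1})_{22}(x)\,(A_N)_{11}(y)=\big(A^{(1)}_{N+1}\big)_{21}P_N(x)\Psi_N(y),$$
using \cref{RHP2solution} and \cref{RHP1solution}. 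Now $\Psi_N=\nu_N\lambda^{-1}\big(P_N-\lambda P_N''+\lambda V'P_N'\big)$, so the summand of $\mathbb K_N$ in \eqref{kernelformula} is $P_N(x)\Psi_N(y)\lambda/(\nu_N h_N)$. The step therefore reduces to the identity
$$\big(A^{(1)}_{N+1}\big)_{21}=-2\pi\mathrm i\,\frac{\lambda}{\nu_N h_N}.$$
This follows from \cref{RHP1solution} applied with $n=N+1$. The $(2,1)$ entry of $A_{N+1}$ is $-\frac{2\pi\mathrm i\lambda}{\nu_N h_N}\Psi_N(z)$, a polynomial of degree $N+D-2$ with that leading coefficient. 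On the other hand, the normalisation gives $(A_{N+1})_{21}=(A^{(1)}_{N+1})_{21}z^{N+D-2}+\dots$, since the first diagonal exponent is $N+1+D-2$. Comparing leading coefficients gives the identity. (For $N=0$ one uses $\nu_0=\lambda$ and $\Psi_0=1$, matching the $n=1$ case listed earlier.)

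\textbf{Base case: the main obstacle.} Since $A_0$ does not exist, the induction must start at $N=1$, and I must verify directly that
$$-\frac{1}{2\pi\mathrm i}\,\frac{(A_1^{-1}(x)A_1(y))_{21}}{x-y}=\frac{P_0(x)\,\lambda\Psi_0(y)}{\nu_0h_0}=\frac1{h_0}.$$
My plan is to work with the polynomial sum $\sum_i(\widehat{A_1})_{i2}(x)(A_1)_{i1}(y)$. Here $(\widehat{A_1})_{22}=P_1=x+c$, and the other $(\widehat{A_1})_{i2}$ are constants of degree $\le 0$. Meanwhile $(A_1)_{11}=\Psi_1$, $(A_1)_{21}=-2\pi\mathrm i/h_0$, and $(A_1)_{i1}=R_1^{(s)}$.

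The key input is that at $x=y$ the sum equals $(A_1^{-1}A_1)_{21}=0$, so it is divisible by $x-y$. The quotient is a polynomial whose leading behaviour in $x$ comes only from the term $x\cdot(A_1)_{21}(y)$, and it has degree $\le 0$ in $x$. I then need its degree in $y$ to be zero as well, i.e.\ that the quotient is the constant $-2\pi\mathrm i/h_0$. To control this I would expand $A_1^{-1}(x)A_1(y)$ for large $x$ and $y$ using the normalisations of \cref{RHP1} and \cref{RHP2}.

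A cleaner alternative I would try first is to avoid the base case altogether. The idea is to show that $G_N(x,y):=-\frac{1}{2\pi\mathrm i}\frac{(A_N^{-1}(x)A_N(y))_{21}}{x-y}-\mathbb K_N(x,y)$ is independent of $N$. I would then argue via the reproducing property that $G$ vanishes: both kernels reproduce $\mathscr P_{N-1}$ under $\int\cdot\,f(y)\mathrm e^{-V(y)}\,\mathrm dy$, and they have the right degrees. The reproducing property for the RHP side would come from the orthogonality relations of the Cauchy-transform columns. If that proves heavy, I would fall back on the direct degree count at $N=1$ outlined above.
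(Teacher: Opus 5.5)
Your argument is the paper's proof: the same telescoping of $(A_n^{-1}(x)A_n(y))_{21}$ via \cref{recurrence}, with $(A^{(1)}_{n+1})_{21}=-2\pi\mathrm{i}\lambda/(\nu_n h_n)$ read off from the leading coefficient of $(A_{n+1})_{21}$, followed by a direct check at $N=1$ using $A_1^{-1}(x)A_1(x)=\mathbb{I}$ and the constancy of $(A_1)_{21}$ and of $(\widehat{A_1})_{k2}$ for $k\neq 2$. The base case you call the main obstacle is already finished by the facts you list: $S(x,y)=\sum_i(\widehat{A_1})_{i2}(x)(A_1)_{i1}(y)$ is affine in $x$ with $x$-coefficient $(A_1)_{21}=-2\pi\mathrm{i}/h_0$, so $S(x,y)=S(x,y)-S(y,y)=-\frac{2\pi\mathrm{i}}{h_0}(x-y)$ (the paper phrases this as $p(y)-p(x)$), and neither the expansion at infinity nor the reproducing-kernel detour is needed.
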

\begin{proof}
Since $\lambda n D \gamma \Psi_n = P_n - \lambda P_n^{\prime\prime} + \lambda V^\prime P_n^\prime$ for $n \geq 1$ by \eqref{PsiP}, the kernel of \eqref{kernelformula} may be rewritten as
\begin{align}\label{kernelPsi}
\mathbb{K}_N(x,y) = \sum_{n=1}^{N-1} \frac{\lambda n D \gamma}{h_n} P_n(x) \Psi_n(y) + \frac{1}{h_0} \, .
\end{align}
Define $$B_n(x,y) \overset{\mathrm{def}}{=} A_n^{-1}(x) A_n(y) = \widehat{A_n}(x)^\mathsf{T} A_n(y) \, .$$ By \cref{recurrence} we have
\begin{align*}
B_{n+1}(x,y) &= A_n^{-1}(x) \Delta_n(x)^{-1}\Delta_n(y) A_n(y) = A_n^{-1}(x) \Big\{ (x-y) \big(A^{(1)}_{n+1}\big)_{21} \mathsf{E}^{21} + \mathbb{I} \Big \} A_n(y) \\
&=  \big(A^{(1)}_{n+1}\big)_{21} (x-y) A_n^{-1}(x)\mathsf{E}^{21} A_n(y) + B_n(x,y) \, .
\end{align*}
Taking the $(2,1)$ matrix element of both sides we find
\begin{align}\label{telescope}
\big(B_{n+1}\big)_{21}(x,y) = \big(A^{(1)}_{n+1}\big)_{21} (x-y) \big(A_n^{-1}\big)_{22}(x) \big(A_n\big)_{11}(y) + \big(B_n\big)_{21}(x,y) \, .
\end{align}
We then observe by \cref{RHP2solution} that $\big(A_n^{-1}\big)_{22} = P_n$, by \cref{RHP1solution} that $\big(A_n\big)_{11} = \Psi_n$, and, from the same theorem, that $\big(A^{(1)}_{n+1}\big)_{21} = - \frac{2\pi \mathrm{i} \lambda D n \gamma}{h_n}$ for $n \geq 1$. Putting these things together, and summing \eqref{telescope} over $n \in [\![ 1, N-1 ]\!]$, we obtain a telescoping series, whence
$$(x-y)\mathbb{K}_N(x,y) = - \frac{1}{2\pi \mathrm{i}}\Big( \big(B_N\big)_{21}(x,y) - \big(B_1\big)_{21}(x,y)\Big)  + (x-y) \frac{1}{h_0}  \, .$$
Finally, it remains to show that $\big(B_1\big)_{21}(x,y) =  - 2\pi \mathrm{i} (x-y) \frac{1}{h_0}$, which would complete the proof. From the definition we have
\begin{align*}
\big(B_1\big)_{21}(x,y) = \sum_{k=1}^D ( \widehat{A_1})_{k2}(x) (A_1)_{k1}(y) \, .
\end{align*}
From the fact that $B_1(x,x) = \mathbb{I}$, $\big(B_1\big)_{21}(x,x) = 0$. Hence we may write
\begin{align*}
\big(B_1\big)_{21}(x,y) = \sum_{k=1}^D ( \widehat{A_1})_{k2}(x) \Big[  (A_1)_{k1}(y) - (A_1)_{k1}(x) \Big]  \, .
\end{align*}
Next, observe that $(A_1)_{21}$, from \cref{RHP1solution}, is a constant, and so the $k = 2$ term in the above sum gives $0$. Next, if we return to the second column of RHP \ref{RHP2}, we have already shown that the $(2,2)$ element is a monic polynomial of degree $1$, and the remaining elements of the second column are constants. Hence we may write $\big(B_1\big)_{21}(x,y) = p(y) - p(x)$ where
$$p(x) = \sum_{\substack{k=1 \\ k \neq 2}}^D (\widehat{A_1})_{k2} (A_1)_{k1}(x) $$
where we remind that $(\widehat{A_1})_{k2}$ are constants for $k \neq 2$. Let us add by hand the $k = 2$ term evaluated at $x$.
\begin{align*}
p(x) = \underbrace{\sum_{k=1}^D (\widehat{A_1})_{k2}(x) (A_1)_{k1}(x)}_{=(A_1^{-1}(x) A_1(x))_{21} = 0} - (\widehat{A_1})_{22}(x) (A_1)_{21}(x) \, .
\end{align*}
We already know from \cref{RHP1solution} that $(A_1)_{21}(x) = - \frac{2\pi\mathrm{i}}{h_0}$ and $(\widehat{A_1})_{22}(x) = P_1(x) =  x + c$ for some constant $c$. Hence
$$\big(B_1\big)_{21}(x,y)  = p(y) - p(x) = - \frac{2\pi \mathrm{i}}{h_0}(x-y)$$
which completes the proof.
\end{proof}
We should remark that our method of proof for \cref{CDtheorem} is strongly inspired by method of Bertola and Bothner in \cite{Bertola:2015aa} (see in particular, the proof of Theorem 2.8).

\section{Discussion}\label{sec:discussion}

Let us end by discussing the significance of our results. Firstly, we note that we have succeeded in semi-explicitly characterising the image of the map $P \mapsto P - \lambda P^{\prime\prime} + \lambda V^\prime P^\prime$ in terms of linear functionals which are completely independent of the degree of the polynomials. These linear functionals are expressed in terms of the data of a spectral problem \eqref{schroedinger}, namely the even Sibuya solutions $\{ Y_{2j} \}_{j = 0}^{D-1}$ and the constants $\{\beta_j, \kappa_j \}_{j \in [\![ 1, D-1 ]\!] \setminus \{ D/2\}}$, and once $V$ and $\lambda > 0$ are given, these data are fully determined. The fact that the resulting Riemann--Hilbert problem gives exactly the projection kernel we want \eqref{CDformula} suggests we are on the right track. To the knowledge of the author, this represents the first time that either a Riemann--Hilbert problem or a Christoffel--Darboux formula has been found for any class of Sobolev orthogonal polynomials with a continuous inner product.

Perhaps the principal motivation for developing a Riemann--Hilbert representation is the possibility of doing a Deift--Zhou steepest descent analysis \cite{deift_zhou_1993}. For this reason, the fact that the jump matrices in RHPs \ref{RHP1} and \ref{RHP2} are not explicit might be regarded as frustrating. However the situation is not entirely hopeless. Firstly, the $\lambda \to 0$ and $\lambda \to +\infty$ regimes could be analysed with WKB methods to get approximations on the spectral data; indeed, the  formula \eqref{integralequationsolution} already gives an asymptotic series for the Sibuya solutions in the regime of $\lambda \to +\infty$.

Perhaps the most interesting regime is where both $V$ and $\lambda$ scale with $N$ such that $V = N v$ and $\lambda = \frac{1}{N^2}\mu$ for some fixed potential $v$ and some parameter $\mu > 0$. Then the spectral problem \eqref{schroedinger} becomes
\begin{align}\label{exactWKB}
Y_{k}^{\prime\prime}(z) = N^2 \Big\{  \frac{1}{4} v^{\prime}(z)^2   + \mu^{-1} - \frac{1}{2N} v^{\prime\prime}(z) \Big\} Y_k(z) \, .
\end{align}
The problem of finding the asymptotics of Sibuya solutions $Y_k$ for ODEs of the form \eqref{exactWKB} as $N \to +\infty$ is  precisely what \textit{exact WKB theory} describes \cite{iwaki2026leshoucheslecturesexact,kawai_takei_2005}. We speculate that away from the turning points of $\frac{1}{4} (v^{\prime})^2   + \mu^{-1} - \frac{1}{2N} v^{\prime\prime}$ one would use the exact WKB asymptotics and, from there, one would build an outer parametrix for the Riemann--Hilbert problem $A_n$, whilst in a neighbourhood of the turning points one would be required to do a local analysis. We leave such fascinating yet highly technical problems for future research. Let us also briefly mention the celebrated ODE/IM correspondence \cite{Dorey_2007} which concerns the Sibuya solutions $Y_k$ and Stokes multipliers $\sigma_k$ in the case that $U_0$ is a monomial. This special form of the potential introduces additional symmetries into the problem which greatly simplifies things, and allows a bridge to be set up with the world of quantum integrable models. We mention this because although there is no \textit{exact} connection, since no polynomial $V$ can make $U_0$ a monomial, there may be an \textit{asymptotic} connection, since if $v$ is monomial the leading term in \eqref{exactWKB} is of the form of a monomial plus a constant.

The method of the present paper relies strongly on the fact that, firstly, there are only two derivatives in total in our Sobolev inner product \eqref{sobolevinnerprod}, and secondly, that the \textit{same} polynomial $V$ appears in the weight functions of both terms. It is not at all clear to what extent our construction generalises beyond the class of inner products considered in \eqref{sobolevinnerprod}, and we leave this as a problem for future investigation.

Finally, let us remark on the non-Hermitian case, i.e. when the inner product \eqref{sobolevinnerprod} takes the same form but the contour $\mathbb{R}$ is replaced by some other unbounded contour. That is, one should pick a pair of distinct integers $a, b \in [\![0, D-1]\!]$ so that the contour goes from $\infty_{2a}$ to $\infty_{2b}$. In this case $D$ is no longer required to be even, and indeed we could have $D = 3$, and $V$ could have complex coefficients. This immediately has the problem that the non-Hermitian 
\enquote{inner product} $\langle \cdot, \cdot \rangle_S$ might be degenerate, hence non-degeneracy must be added as an additional hypothesis. Furthermore, one would need to find a suitable generalisation of \cref{Mlemma} to the non-Hermitian setting, since the proof relied essentially on $D$ being even and $V$ being real.

\section*{Acknowledgement}

A.L. is supported by the joint ANR-DFG TSF24 project ANR-24-CE92-0033.

\bibliographystyle{plain}
\bibliography{SobolevRefs} 

\end{document}